\newcommand{\abs}[1]{\left|#1\right|}
\tikzset{
  bigblue/.style={circle, draw=blue!80,fill=blue!40,thick, inner sep=1.5pt, minimum size=5mm},
  bigred/.style={circle, draw=red!80,fill=red!40,thick, inner sep=1.5pt, minimum size=5mm},
  bigblack/.style={circle, draw=black!100,fill=black!40,thick, inner sep=1.5pt, minimum size=5mm},
  bluevertex/.style={circle, draw=blue!100,fill=blue!100,thick, inner sep=0pt, minimum size=2mm},
  redvertex/.style={circle, draw=red!100,fill=red!100,thick, inner sep=0pt, minimum size=2mm},
  blackvertex/.style={circle, draw=black!100,fill=black!100,thick, inner sep=0pt, minimum size=1.5mm},  
  whitevertex/.style={circle, draw=black!100,fill=white!100,thick, inner sep=0pt, minimum size=2mm},  
  smallblack/.style={circle, draw=black!100,fill=black!100,thick, inner sep=0pt, minimum size=1mm},
  smallwhite/.style={circle, draw=black!100,fill=white!100,thick, inner sep=0pt, minimum size=1mm} 
}
\newenvironment{claimproof}[1]{\par\noindent\underline{Proof:}\space#1}{\leavevmode\unskip\penalty9999 \hbox{}\nobreak\hfill\quad\hbox{$\blacksquare$}}
\title{Local Hadwiger's Conjecture}
\author{Benjamin Moore
\thanks{Computer Science Institute, Charles University, Prague, Czech Republic, {\tt brmoore@iuuk.mff.cuni.cz}.}
\and
Luke Postle
\thanks{Combinatorics and Optimization Department,
University of Waterloo, Waterloo, Ontario N2L 3G1, Canada {\tt lpostle@uwaterloo.ca}. Partially supported by NSERC
under Discovery Grant No. 2019-04304 and the Canada Research Chairs program.}
\and
Lise Turner
\thanks{Department of Combinatorics and Optimization, University of Waterloo, Waterloo, ON, Canada {\tt lise.turner@uwaterloo.ca}.}}
\newtheorem{thm}[equation]{Theorem}
\newtheorem{lemma}[equation]{Lemma}
\newtheorem{conj}[equation]{Conjecture}
\newtheorem{cor}[equation]{Corollary}
\newtheorem{claim}{Claim}
\newtheorem{obs}[equation]{Observation}
\theoremstyle{definition}
\newtheorem{definition}[equation]{Definition}
\newtheoremstyle{case}{}{}{\normalfont}{}{\itshape}{\normalfont:}{ }{}
\theoremstyle{case}
\numberwithin{equation}{section}
\date{}
\begin{document}

\maketitle

\begin{abstract}
We propose local versions of Hadwiger's Conjecture, where only balls of radius $\Omega(\log(v(G)))$ around each vertex are required to be $K_{t}$-minor-free. We ask: if a graph is locally-$K_{t}$-minor-free, is it $t$-colourable? We show that the answer is yes when $t \leq 5$, even in the stronger setting of list-colouring, and we complement this result with a $O(\log v(G))$-round distributed colouring algorithm in the LOCAL model. Further, we show that for large enough values of $t$, we can list-colour locally-$K_{t}$-minor-free graphs with $13\cdot \max\left\{h(t),\left\lceil \frac{31}{2}(t-1) \right\rceil \right\})$colours, where $h(t)$ is any value such that all $K_{t}$-minor-free graphs are $h(t)$-list-colourable. We again complement this with a $O(\log v(G))$-round distributed algorithm. 
\end{abstract}

\section{Introduction}
Hadwiger's Conjecture is one of the most famous open problems in graph theory and is a vast generalization of the Four Colour Theorem. The conjecture involves two storied areas of graph theory: graph colouring and graph minors. A graph $G$ has a \textit{$k$-colouring} if there is a map $f:V(G) \to \{1,\ldots,k\}$ such that for every edge $e=xy$, we have $f(x) \neq f(y)$. A graph $G$ has an \emph{$H$ minor} if a graph isomorphic to $H$ can be obtained from a subgraph of $G$ by contracting edges.

Hadwiger's Conjecture asserts a tight bound for the chromatic number of graphs not containing a large $K_{t}$ minor. 
\begin{conj}[Hadwiger's Conjecture \cite{Hadwigerconj}]
Every graph with no $K_{t}$ minor is $(t-1)$-colourable.
\end{conj}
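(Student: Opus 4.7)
The plan is to attack the conjecture by induction on $t$, reducing to the structure of a vertex-minimum counterexample $G$. The base cases $t\le 4$ are covered by Dirac's theorem, and $t=5,6$ are known to reduce to the Four Colour Theorem (Wagner; Robertson--Seymour--Thomas), so the real target is $t\ge 7$. First I would show that such a minimum $G$ is $(t-1)$-connected: a separation $(A,B)$ of order $k<t-1$ splits $G$ into two smaller $K_t$-minor-free subgraphs, each $(t-1)$-colourable by induction, and permuting palettes on the separator glues the two colourings. A similar reduction contracts any small dominating subgraph and appeals to induction on the resulting smaller graph, so one may further assume $G$ has no such contractible substructure.

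Next I would try to exploit known extremal bounds. Kostochka's and Thomason's theorems give every $K_t$-minor-free graph a vertex of degree at most $ct\sqrt{\log t}$, which immediately yields an $O(t\sqrt{\log t})$-colouring; recent work of Norin--Song, Postle, and Delcourt--Postle has pushed this to $O(t(\log\log t)^{O(1)})$. To close the remaining gap I would combine these density bounds with the Robertson--Seymour graph minor structure theorem: every $K_t$-minor-free graph admits a tree decomposition whose torsos are almost embeddable in a bounded-genus surface, with a bounded number of apex vertices and a bounded number of bounded-depth vortices. The strategy is to colour each torso in $t-1$ colours using its near-planar structure (applying the Four Colour Theorem and bounded-genus colouring results to the embeddable part) and then glue along the small adhesion sets by a palette-permutation argument, mimicking the $t=6$ proof.

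The main obstacle --- and the reason the conjecture has resisted over eighty years of attack --- is precisely the step from $O(t(\log\log t)^{O(1)})$ colours down to exactly $t-1$. Linear density bounds and greedy colouring cannot close this gap, because they are blind to the difference between extremal join-type constructions (which saturate the bound at $t-1$) and merely dense graphs; on the other side, the Robertson--Seymour decomposition does not come equipped with any colouring routine that respects the exact value $t-1$, since the bounded-depth vortices destroy planarity and tend to force additional apex-like colours on adhesion sets. A genuinely new ingredient would be required --- perhaps a local reducible-configuration theorem for $(t-1)$-connected $K_t$-minor-free graphs in the spirit of discharging proofs of the Four Colour Theorem, or a global colouring scheme that processes all pieces of the decomposition simultaneously rather than greedily. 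I do not expect the sketch above to succeed without such a new idea, and in fact the paper at hand does not claim to prove Hadwiger's Conjecture itself but rather introduces the local relaxation described in the abstract and proves that version for $t\le 5$ together with an asymptotic bound for large $t$.
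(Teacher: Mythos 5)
The statement you were given is Hadwiger's Conjecture itself, which the paper only \emph{states} (as Conjecture 1.1, citing Hadwiger) and never claims to prove --- it is one of the most famous open problems in graph theory, and the paper's contribution is a local relaxation of it. Your proposal correctly recognizes this: you do not claim a proof, you identify the genuine gap (closing the distance from the known $O(t(\log\log t)^{O(1)})$-colourability of $K_t$-minor-free graphs down to exactly $t-1$ colours), and you correctly note that the paper proves only local versions for $t\le 5$ plus an asymptotic bound for large $t$. So there is nothing in the paper to compare your argument against, and your honest assessment that a new idea is needed is the right conclusion.

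One concrete caution about the sketch you do give: the reduction asserting that a vertex-minimum counterexample is $(t-1)$-connected does not work as stated. If $(A,B)$ is a separation of order $k<t-1$ whose separator $S$ is not a clique, then $(t-1)$-colourings of $G[A]$ and $G[B]$ need not be compatible on $S$ under any permutation of palettes --- two vertices of $S$ may receive the same colour on one side and be forced to differ on the other. The standard minimal-counterexample argument only rules out \emph{clique} cutsets (where one can add the missing edges, or equivalently permute colours freely because the separator vertices all get distinct colours); establishing high connectivity of minimum counterexamples to Hadwiger-type statements requires substantially more work, as in the Robertson--Seymour--Thomas proof for $t=6$. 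Likewise, the Robertson--Seymour structure theorem's apices and vortices are exactly where the ``colour each torso with $t-1$ colours and glue'' plan breaks down, as you yourself observe. None of this changes the verdict: the statement is open, and your proposal appropriately stops short of claiming otherwise.
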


In this paper, we propose local variants of Hadwiger's Conjecture wherein a graph is only `locally' $K_t$-minor-free; such versions are also necessary for the existence of LOCAL distributed algorithms for colouring $K_t$-minor-free graphs. We then prove various local versions (Theorems~\ref{smallvaluesoft} and~\ref{largevaluesoft}) and provide the complementary local algorithms (Theorem~\ref{algorithmicresults}); in particular, we show an asymptotic equivalence between the non-local version and the local version (and even the existence of a local distributed algorithm). 

First, we survey some of the known results on Hadwiger's Conjecture as follows. Hadwiger's Conjecture is rather straightforward to prove when $t \leq 4$. When $t = 5$, due to a structural characterization of $K_{5}$-minor-free graphs by Wagner~\cite{Wagnersthm}, the conjecture is equivalent to the famous Four Colour Theorem proved by Appel and Haken~\cite{fourcolourtheorem} in the 1970s. The $t=6$ case was proved by Robertson, Seymour and Thomas~\cite{Hadwiger6} in 1993, while for all $t \geq 7$, the conjecture remains open. Due to the difficulty of Hadwiger's Conjecture, the following weakening has received significant attention.

\begin{conj}[Linear Hadwiger's Conjecture \cite{LinearHadwiger}]
There exists a constant $c >0$ such that every graph with no $K_{t}$ minor is $ct$-colourable.
\end{conj}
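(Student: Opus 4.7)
The plan is to pursue the classical density--to--degeneracy reduction and attempt to push it to a linear bound. First, observe that if every $K_t$-minor-free graph has a vertex of degree at most $ct-1$, then one can induct on $|V(G)|$: delete a minimum-degree vertex $v$, apply the hypothesis to $G-v$ to obtain a $ct$-coloring, and extend the coloring greedily at $v$. So it suffices to prove that the minimum degree (equivalently, the maximum average degree, by passing to a densest subgraph) of a $K_t$-minor-free graph is $O(t)$.

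Second, I would start from the Mader--Kostochka--Thomason extremal theorem, which establishes that the maximum average degree of a $K_t$-minor-free graph is $\Theta(t\sqrt{\log t})$. Applied directly this only gives a $(1+o(1))\cdot t\sqrt{\log t}$-coloring, so the plan is to refine the degeneracy argument by working with a minimum counterexample $G$, considering a subgraph $H$ attaining the critical chromatic number, and trying to exhibit either a small separator (so that a tree-decomposition reduces us to smaller pieces) or a low-degree vertex hidden inside a structured subgraph of $H$. A more sophisticated variant, in the spirit of Norin--Postle--Song, would bound $\chi(G) - (1+\varepsilon)t$ by showing that once $\chi$ becomes large enough, one can extract a dense ``core'' containing a $K_{t'}$-minor for some $t' < t$, enabling a recursive near-linear coloring via a carefully tuned potential function.

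The main obstacle, and the reason the conjecture has stood since 1982, is the $\sqrt{\log t}$ gap in the extremal average-degree bound: Kostochka's random-graph construction shows the bound $\Theta(t\sqrt{\log t})$ is tight, so any strictly density-based argument cannot beat $O(t\sqrt{\log t})$ colors. To close the gap one has to find structural features of $K_t$-minor-free graphs beyond density---expansion, high-connectivity substructure, or a new minor-extraction lemma that converts chromatic excess directly into a $K_t$-minor---and at present no such tool is known. Realistically, I would expect this line of attack to recover only the current state of the art $O(t \cdot \mathrm{polylog}(t))$ bound, rather than the full linear conjecture.
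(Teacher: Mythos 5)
This statement is Linear Hadwiger's Conjecture, which is an \emph{open problem}: the paper does not prove it, it only cites it as a conjecture (the best known bounds being $O(t\log\log t)$ for colouring and $O(t(\log\log t)^6)$ for list-colouring). So there is no ``paper proof'' to match, and your proposal is not a proof either --- it is a plan plus an honest admission that the plan cannot reach the stated bound.

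The concrete gap is the one you yourself identify: the only quantitative engine in your argument is degeneracy, and the Kostochka--Thomason bound $\Theta(t\sqrt{\log t})$ on the extremal average degree of $K_t$-minor-free graphs is tight, so the greedy/minimum-degree induction in your first paragraph can never yield $ct$ colours; it stalls at $O(t\sqrt{\log t})$. The second and third paragraphs gesture at the Norin--Postle--Song and Delcourt--Postle machinery (extracting highly connected or dense substructures and recursing), but you do not supply the key new lemma that would convert chromatic excess beyond $O(t)$ into a $K_t$ minor, and you explicitly concede that with known tools you would ``expect \ldots to recover only the current state of the art.'' A proof attempt that terminates in the statement that the target bound is out of reach is a missing proof, not a proof by a different route. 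For the purposes of this paper, nothing is lost: the conjecture is only used as motivation and as a benchmark (via the function $h(t)$) for the local results, which are proved independently of its truth.
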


Since this conjecture is open, a natural question is: what is the best function $f(t)$ such that for all graphs $G$ with no $K_{t}$ minor, $G$ is $f(t)$-colourable? For a long time, the best known function $f(t)$ was $O(t\sqrt{\log(t)})$, proven independently by Kostochka \cite{degeneracyKostochka1, degeneracyKostochka2} and Thomason \cite{thomason}. In particular, they proved a bound on the degeneracy of $K_{t}$-minor-free graphs. Recall, for an integer $d$, a graph $G$ is \textit{$d$-degenerate} if every subgraph $H$ of $G$ contains a vertex of degree at most $d$. Independently, Kostochka and Thomason proved the following result.

\begin{thm}[\cite{degeneracyKostochka1,degeneracyKostochka2,thomason}]
Every graph with no $K_{t}$ minor is $O(t\sqrt{\log(t)})$-degenerate.
\end{thm}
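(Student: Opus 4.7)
The plan is to prove the contrapositive statement: there exists a constant $C>0$ such that every graph of average degree at least $Ct\sqrt{\log t}$ contains a $K_{t}$ minor. The degeneracy conclusion then follows because $K_{t}$-minor-freeness is closed under taking subgraphs: if $G$ has no $K_{t}$ minor, then no subgraph $H \subseteq G$ has one either, so every such $H$ has average degree less than $Ct\sqrt{\log t}$, hence a vertex of degree at most $Ct\sqrt{\log t}$, which is precisely the definition of $O(t\sqrt{\log t})$-degeneracy.

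First I would perform the standard minimum-degree reduction. Given $G$ with average degree $d \geq 2Ct\sqrt{\log t}$, iteratively delete vertices of degree less than $d/2$; this cannot delete everything (the total number of edges removed is less than $|V(G)|\cdot d/2$), and the resulting non-empty subgraph $H$ satisfies $\delta(H) \geq Ct\sqrt{\log t}$. Since any $K_{t}$ minor of $H$ is also a $K_{t}$ minor of $G$, it suffices to construct the minor inside $H$.

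The technical heart of the proof is to produce $t$ pairwise disjoint, connected \emph{branch sets} $B_{1},\dots,B_{t} \subseteq V(H)$ such that for every $i \neq j$ there is an edge of $H$ between $B_{i}$ and $B_{j}$. Following Kostochka's probabilistic approach, I would aim for branch sets of size roughly $s = \Theta(\sqrt{\log t})$, grown by BFS from carefully chosen seed vertices (Thomason's alternative route uses a weighted extremal/potential argument rather than randomness, but arrives at the same bound). Because $\delta(H)$ is large relative to the number of seeds, a branch set of $s$ vertices will typically have an external neighborhood of size $\Omega(\delta \cdot s / t)$; a first-moment calculation on the \emph{number of missing pairs} $(i,j)$ then shows that, for the right choice of parameters, the expected number of pairs $\{i,j\}$ with no $B_{i}$-$B_{j}$ edge is strictly less than $\binom{t}{2}$, yielding a witness configuration and hence a $K_{t}$ minor.

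The main obstacle is balancing two competing requirements: the branch sets must be small enough to fit pairwise-disjointly inside $H$, yet have external boundary large enough that \emph{every} other branch set is almost surely hit by an edge. A naive calculation only yields Mader's $2^{t-2}$ bound or the intermediate $O(t\log t)$; obtaining $O(t\sqrt{\log t})$ requires choosing $s$ of order $\sqrt{\log t}$ (so that $\binom{t}{2}$ bad-pair probabilities can be beaten by a factor of $e^{-\Omega(\log t)}$) and tracking the expansion of BFS balls carefully, often via a concentration inequality for the boundary of a random branch set. Making this trade-off rigorous, and ruling out pathological cases where expansion fails (e.g.\ by a preliminary pass that further restricts to a subgraph with additional expansion properties), is the delicate step.
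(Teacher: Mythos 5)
This statement is quoted in the paper as a known theorem of Kostochka and Thomason; the paper gives no proof of it, so there is nothing internal to compare against, and your proposal must stand on its own as an argument. As it stands, it does not: it is an accurate description of the shape of the known proofs (contrapositive via extremal average degree, minimum-degree reduction, branch sets of size $\Theta(\sqrt{\log t})$), but the step that actually carries the theorem is only gestured at. The minimum-degree reduction and the closure of minor-freeness under subgraphs are fine, and the reduction of degeneracy to the extremal-degree statement is correct.

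The genuine gap is the claim that a connected branch set of $s$ vertices ``will typically have an external neighborhood of size $\Omega(\delta\cdot s/t)$'' hitting the other branch sets, so that a first-moment count over the $\binom{t}{2}$ pairs closes the argument. Minimum degree $\delta \geq Ct\sqrt{\log t}$ alone gives no such thing: the $t$ branch sets occupy only about $ts = \Theta(t\sqrt{\log t})$ vertices, which is comparable to $\delta$ but possibly a vanishing fraction of $v(H)$, so a vertex's $\delta$ neighbours may lie almost entirely outside the union of the branch sets, and the probability that $B_i$ misses $B_j$ need not be small. The whole difficulty of Kostochka's and Thomason's proofs is precisely the ``preliminary pass'' you defer in one clause: one must first extract a small, dense (or suitably expanding) subgraph of order comparable to $t\sqrt{\log t}$ in which neighbourhoods are forced to meet the branch sets, and this extraction (Kostochka's density induction, or Thomason's analysis of the extremal function) is where the $\sqrt{\log t}$ factor is actually earned; without it the sketch only recovers the easier $O(t\log t)$-type bounds, as you yourself note. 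So the proposal identifies the right strategy but omits its essential lemma, and should not be regarded as a proof.
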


It is known that there exist graphs with no $K_{t}$ minor where the the degeneracy bound above is tight \cite{thomason} and hence the above result is best possible. In fact, in \cite{REED1998147}, it is mentioned that a commonly expressed ``counter" conjecture was that $f(t)$ was $\Omega(t\sqrt{\log(t)})$. Recently Norin, the second author, and Song \cite{degenbroken} disproved this assertion as follows.

\begin{thm}[\cite{degenbroken}]
For every $\beta >\frac{1}{4}$, every graph with no $K_{t}$ minor is $O(t\log(t)^{\beta})$-colourable.
\end{thm}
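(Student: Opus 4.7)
The plan is to move strictly beyond the Kostochka--Thomason degeneracy argument, since no purely degeneracy-based bound can break the $\sqrt{\log t}$ barrier (the extremal examples of Thomason achieve average degree $\Theta(t\sqrt{\log t})$). Instead, I would study a \emph{vertex-critical} counterexample: a $K_t$-minor-free graph $G$ of minimum order with $\chi(G) > Ct(\log t)^{\beta}$ for some absolute constant $C=C(\beta)$. Criticality gives $\delta(G) \geq \chi(G)-1$, and applying Kostochka--Thomason to $G$ already forces $\chi(G) = O(t\sqrt{\log t})$, so the task reduces to squeezing out the extra factor $(\log t)^{1/4-\beta}$ that separates the desired bound from degeneracy.

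The core step would be a \emph{density-increment dichotomy}: writing $d(H) := e(H)/v(H)$, show that any $K_t$-minor-free graph $G$ with density close to the extremal value $\Theta(t\sqrt{\log t})$ must contain either (i) a strictly smaller subgraph $H \subseteq G$ whose density is larger by a definite multiplicative factor, or (ii) a highly structured ``dense region'' that can be contracted to produce useful minors. In case (i), one iterates on $H$, and since density cannot grow indefinitely without creating a $K_t$-minor, the iteration must terminate quickly; the gain per iteration is what converts a $\sqrt{\log t}$ estimate into a $(\log t)^{\beta}$ estimate for $\beta$ slightly above $1/4$.

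Given this dichotomy, I would finish the colouring bound as follows. In case (ii), extract the dense region $D \subseteq G$ together with its connectivity structure, contract $D$ to a single branch set, and argue that the contracted graph either admits a $K_t$ minor directly (contradiction) or admits a colouring that lifts to $G$ using only $O(t(\log t)^{\beta})$ colours via the small chromatic cost of $D$ (which by induction on $v(G)$ is $O(t(\log t)^{\beta})$). In case (i), apply the inductive hypothesis to $H$, which yields a reduction in the effective ``potential'' (e.g.\ a weighted combination of $v(G)$ and $d(G)$), so the minimal counterexample cannot exist.

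The main obstacle is establishing the density-increment itself: producing a subgraph whose density exceeds the ambient density by a factor bounded away from $1$, \emph{uniformly} over the range of parameters. The classical averaging argument used for degeneracy produces only a subgraph of higher \emph{minimum} degree, not higher \emph{average} degree over a strictly smaller vertex set, and without the latter one cannot iterate. I expect this to require either a clever random-sampling / concentration argument, or a structural lemma showing that $K_t$-minor-free graphs near the density threshold must contain a small set spanning many edges --- essentially an anti-concentration result for edge distribution in minor-closed families. This is the technical heart of the improvement and is where the precise exponent $\beta > 1/4$ would enter.
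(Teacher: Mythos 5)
This statement is quoted from the cited reference [degenbroken]; the paper you were given contains no proof of it, so your sketch must stand on its own, and as it stands it has a genuine gap at its declared technical heart. The ``density-increment dichotomy'' is precisely the step you leave unproven (``I expect this to require either a clever random-sampling\ldots or a structural lemma\ldots''), and there is good reason to believe it fails in the form you state it: Thomason's extremal $K_t$-minor-free graphs of density $\Theta(t\sqrt{\log t})$ are quasi-random blow-ups that are essentially homogeneous, so near the density threshold there need not exist any strictly smaller subgraph whose edge density exceeds the ambient one by a factor bounded away from $1$. Density alone cannot be the potential that improves on Kostochka--Thomason. The case (ii) handling is also not workable as written: contracting a dense region $D$ to a single branch set contributes only one vertex of a prospective minor, so it cannot ``admit a $K_t$ minor directly,'' and the claim that a colouring of the contracted graph ``lifts to $G$ using only $O(t(\log t)^{\beta})$ colours via the small chromatic cost of $D$'' quietly assumes the bound being proved for $D$, which is circular unless $D$ is genuinely smaller in a way your induction controls.

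The actual argument of Norin, Postle and Song proceeds along a different axis. Rather than iterating a density increment, one works with a minimum counterexample (which, being critical, has large minimum degree) and exploits two ingredients: (a) $K_t$-minor-free graphs on few vertices admit markedly better colourings than the degeneracy bound suggests, and (b) dense graphs contain small subgraphs that become highly connected after contracting a bounded matching (the same phenomenon as Lemma~\ref{matchinglemma} in this paper). One then finds many vertex-disjoint such pieces inside the counterexample and links them through the rest of the graph to assemble a $K_t$ minor, obtaining a contradiction unless the graph can be coloured; the exponent $\frac{1}{4}$ emerges from balancing the number of pieces against their sizes, not from a density-increment iteration. So beyond the missing key lemma, the route you propose is not the one that is known to work, and the obstruction above suggests it cannot be repaired without replacing density by a finer structural measure.
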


The best current bound is due to Delcourt and the second author~\cite{Smallgraphs}, who proved the following result.

\begin{thm}[\cite{Smallgraphs}]
Every graph with no $K_{t}$ minor is $O(t\log\log(t))$-colourable. 
\end{thm}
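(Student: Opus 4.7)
The plan is to establish the bound inductively on $t$ via a density-increment argument refining the Norin--Postle--Song framework. Define $f(t)$ as the maximum chromatic number over $K_t$-minor-free graphs; the target is $f(t) \leq C t \log \log t$ for an absolute constant $C$.

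First, I would reduce to a minimum counterexample $G$: a $K_t$-minor-free graph with $\chi(G) > C t \log \log t$ and $|V(G)|$ minimized. By minimality, every proper subgraph is $(\chi(G)-1)$-colourable, so $G$ is vertex-critical and $\delta(G) \geq C t \log \log t - 1$. Combining this with the Kostochka--Thomason theorem, which bounds the average degree of $K_t$-minor-free graphs by $O(t \sqrt{\log t})$, constrains $|V(G)|$ relative to $\delta(G)$ and pushes the problem into a ``small graph'' regime where stronger minor-extraction tools become available.

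Second, I would prove a structural dichotomy for $G$: either $G$ contains a subgraph $H$ on $O(t \cdot \mathrm{polylog}(t))$ vertices whose average degree is still $\Omega(t \log \log t)$, in which case a density-based linkage argument --- exploiting the small vertex count to build many vertex-disjoint paths simultaneously --- yields a $K_t$-minor, contradicting $K_t$-minor-freeness; or $G$ admits a small balanced separator $S$ with controlled expansion into its components. In the separator case, I would contract across components meeting $S$ to obtain a $K_{t'}$-minor-free auxiliary graph with $t' < t$, apply induction, and lift the resulting colouring to $G$ at an additive cost of $O(t)$ colours.

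Third, to run the recursion I would iterate this reduction: a single step decreases $t$ by a constant multiplicative factor while losing only an additive $O(t)$ in the colour count. Starting from $t$ and iterating roughly $\log \log t$ times reduces the problem to a base case where the $O(t \sqrt{\log t})$ Kostochka--Thomason bound is itself already $O(t \log \log t)$, and the accumulated additive loss is $O(t \log \log t)$. Summing the two contributions gives $f(t) = O(t \log \log t)$ as required.

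The main obstacle --- and the genuinely new ingredient compared to the $O(t \sqrt{\log t})$ bound --- is the ``small graph'' minor-finding lemma. Classically one needs average degree $\Omega(t \sqrt{\log t})$ to force a $K_t$-minor, whereas here one must extract a $K_t$-minor from a graph of size only $O(t \cdot \mathrm{polylog}(t))$ with average degree $\Omega(t \log \log t)$. Calibrating this lemma so that its parameters match both branches of the dichotomy, and so that the density-increment step loses no more than an additive $O(t)$ per iteration, is where the delicate technical work is concentrated.
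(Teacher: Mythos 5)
The statement you are proving is not proved in this paper at all: it is quoted from Delcourt and Postle~\cite{Smallgraphs}, so the only fair comparison is against that work, and your sketch does not reconstruct it. The fatal problem is your central ``small graph minor-finding lemma'': the claim that a subgraph on $O(t\cdot\mathrm{polylog}(t))$ vertices with average degree $\Omega(t\log\log t)$ must contain a $K_t$ minor is false. The extremal examples behind the Kostochka--Thomason bound are exactly of this shape: quasi-random graphs on $\Theta(t\sqrt{\log t})$ vertices with constant edge density have average degree $\Theta(t\sqrt{\log t})\gg t\log\log t$ and still have no $K_t$ minor. So the ``dense branch'' of your dichotomy cannot produce a contradiction, and no calibration of parameters fixes this --- forcing a $K_t$ minor genuinely requires average degree $\Omega(t\sqrt{\log t})$ unless one exploits much more structure than density on a small vertex set. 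The actual Delcourt--Postle argument does not extract the minor from a single small dense piece; it builds a clique minor iteratively out of many dense pieces spread through the graph (following Norin--Postle--Song), and the quantity driving the induction is the list/chromatic number of small $K_t$-minor-free graphs, not a minor-forcing density threshold.

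There are further gaps even before that point. Criticality only gives $\delta(G)\geq \chi(G)-1\approx Ct\log\log t$, and combining this with the $O(t\sqrt{\log t})$ degeneracy bound puts no upper bound whatsoever on $v(G)$: a vertex-critical $K_t$-minor-free graph with large minimum degree can be arbitrarily large, so you are never ``pushed into a small graph regime.'' In the separator branch, contracting components across a balanced separator of a $K_t$-minor-free graph yields a graph that is still only guaranteed to be $K_t$-minor-free --- there is no mechanism by which the excluded clique order drops to some $t'<t$ by a constant factor --- and ``lifting'' a colouring of a contracted graph back to $G$ at an additive cost of $O(t)$ colours is not a valid operation (identified vertices must receive equal colours, which a colouring of $G$ need not respect, and undoing contractions can force unboundedly many conflicts). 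So both branches of the dichotomy fail as stated, and the recursion has nothing to iterate on; the result should simply be cited, or proved along the genuinely different and substantially longer lines of~\cite{Smallgraphs}.
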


Furthermore, Delcourt and the second author~\cite{Smallgraphs} also showed that to prove linear Hadwiger's conjecture, it suffices to prove the conjecture for small graphs, that is those with at most $O(t\log^4 t)$ vertices. 

Despite the fact that Linear Hadwiger's Conjecture remains open, researchers have already proposed various strengthenings of the conjecture. Recall that for a graph $G$, a \textit{list assignment} $L$ of $G$ is a collection of sets $(L(v): v \in V(G))$. An \textit{$L$-colouring} of a graph $G$ is a colouring $f$ of $G$ such that $f(v) \in L(v)$ for all $v \in V(G)$. If a graph $G$ has an $L$-colouring for every list assignment $L$ where $|L(u)| \geq k$ for every $u \in V(G)$, then $G$ is \textit{$k$-list-colourable}. One particularly nice conjecture is as follows.

\begin{conj}[List Linear Hadwiger's Conjecture \cite{listhadwiger}]
There exists a constant $c >0$ such that every graph with no $K_{t}$ minor is $ct$-list-colourable. 
\end{conj}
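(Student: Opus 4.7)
This is a major open problem, and I can only sketch a plausible attack plan rather than a path to a proof. The natural starting point is the Kostochka--Thomason degeneracy bound: every $K_t$-minor-free graph is $O(t\sqrt{\log t})$-degenerate, and so, by the standard greedy argument, $O(t\sqrt{\log t})$-list-colourable. The recent improvements of Norin--Postle--Song and Delcourt--Postle beat this bound for ordinary chromatic number, and the hope would be to adapt them to the list setting.

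The Delcourt--Postle framework, which gives the current best bound of $O(t\log\log t)$, has two halves: a reduction showing that it suffices to prove Linear Hadwiger for small graphs (those on at most $O(t\log^4 t)$ vertices), and a chromatic argument on such small, necessarily-dense minor-free graphs. The plan is to tackle the halves separately. The reduction half is built on density-increment arguments and inductive removal of low-chromatic subgraphs; most ingredients of this type are known to admit list-colouring analogues (the inductive step goes through essentially unchanged since list-chromatic number is monotone under vertex deletion), so I would expect this half to translate at the cost of at most a logarithmic factor.

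The colouring half is where the real obstacle lies. In the non-list setting one exploits the fact that dense, highly-connected $K_t$-minor-free graphs admit balanced colour-class partitions and invokes pigeonhole on a single fixed palette. In the list setting no such global palette exists, so these arguments must be replaced by either a semi-random nibble with strong list-concentration bounds (in the spirit of Johansson or Molloy for triangle-free graphs) or an Alon--Tarsi/entropy-compression style argument that exploits algebraic structure we do not yet know how to extract from minor-free graphs. A sensible milestone on the way to linear would be a list analogue of the Norin--Postle--Song bound $O(t \log^{\beta} t)$ for $\beta > 1/4$; securing that would suggest the same machinery can be pushed further. The underlying obstacle is that every known sub-Kostochka--Thomason technique trades combinatorial structure for palette uniformity, and reintroducing adversarial lists threatens to cost back exactly the saving one was trying to achieve.
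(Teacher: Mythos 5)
This statement is List Linear Hadwiger's Conjecture, which the paper states as an open conjecture (citing \cite{listhadwiger}) and never proves; it is used only as motivation and as a benchmark, via the function $h(t)$ and the best known bound of $O(t(\log\log t)^{6})$ for list-colouring $K_t$-minor-free graphs from \cite{furtherprogresslist}. You correctly recognise that the statement is a major open problem, and your text is an attack plan rather than a proof, so there is no argument here to verify against the paper: the gap is simply that no proof exists, in your proposal or in the paper.

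Two factual points worth tightening in your sketch. First, the adaptation of the Delcourt--Postle machinery to the list setting is not merely hypothetical: the $O(t(\log\log t)^{6})$ list-colouring bound cited in this paper shows that a substantial part of that programme already goes through for lists, so the obstacle you locate (``no global palette'') is real but has been partially circumvented; the genuinely open part is removing the polylog-iterated factor to reach $O(t)$. Second, any eventual proof must respect the known lower bounds mentioned in the paper: Bar\'at--Joret--Wood's construction and Steiner's $(2-o(1))t$ bound show the constant $c$ cannot be taken close to $1$, so a plan that implicitly aims at list-colouring with roughly $t$ colours (e.g.\ a direct list analogue of Hadwiger's bound) is ruled out from the start. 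As written, your proposal is a reasonable survey of the landscape but contains no step that could be checked or completed, so it cannot be accepted as a proof of the statement.
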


Some lower bounds on the value of $c$ are known. In 2011, Bar\'at, Joret and Wood~\cite{Listdisproof} constructed a graph with no $K_{3t+2}$ minor which is not $4t$-list-colourable for every integer $t\ge 1$. Quite recently, Steiner~\cite{steiner_2022} provided a lower bound of $(2-o(1))t$. For upper bounds, the following is the current best known result.

\begin{thm}[\cite{furtherprogresslist}]
Every graph with no $K_{t}$ minor is $O(t(\log\log(t))^{6})$-list-colourable. 
\end{thm}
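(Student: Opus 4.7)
My plan is to adapt the density-increment framework of Norin--Postle--Song, refined by Delcourt--Postle, to the list-colouring setting. Suppose for contradiction that $G$ is an $L$-critical counterexample: $G$ has no $K_t$ minor, is not $L$-colourable, and $|L(v)| \geq Ct(\log\log t)^6$ at every vertex. Criticality immediately gives $\delta(G) \geq Ct(\log\log t)^6$, so $G$ is far denser than the Kostochka--Thomason degeneracy threshold $\Theta(t\sqrt{\log t})$, which forces non-trivial global structure.

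The first step is a reduction to small graphs \`a la Delcourt--Postle: one may assume $|V(G)| \leq t\cdot\mathrm{polylog}(t)$. The second step is a dichotomy. Either (a) $G$ admits a balanced separator $S$ of size substantially smaller than $Ct(\log\log t)^6$, in which case one inductively $L$-colours the two sides of $G-S$ and extends across $S$ by a greedy argument exploiting the surplus in each list; or (b) $G$ is robustly well-connected, in the sense that it contains a large family of disjoint small connected subgraphs whose contractions preserve high connectivity. In case (b), one performs a density increment by contracting a carefully chosen dense, well-connected subgraph; after $O(1)$ such increments, the resulting minor has edge density exceeding $\Omega(t\sqrt{\log t})$, producing the forbidden $K_t$ minor and completing the contradiction.

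The main obstacle, and the source of the $(\log\log t)^6$ overhead, is ensuring that enough list-colouring slack survives each contraction. Whereas the chromatic-number argument can reuse a single $t$-palette across all branch sets, list colouring must cope with adversarial lists, so one cannot freely assume that distinct branch sets share available colours. I would resolve this by randomly partitioning the colour universe (or by randomly sampling from each list) and applying Talagrand-type concentration together with the Lov\'asz Local Lemma to show that almost every prospective branch set inherits a well-behaved induced list assignment of nearly full size. Each of the $O(1)$ recursion levels contributes a constant-power loss from these concentration arguments, and optimising the exponents across the recursion yields the final $(\log\log t)^6$ factor in the bound.
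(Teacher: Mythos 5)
This statement is not proved in the paper at all: it is quoted from the cited reference as a known black-box result, so the only question is whether your sketch would stand on its own as a proof, and it would not. The central steps are asserted rather than established. In your case (a), "inductively $L$-colour the two sides of $G-S$ and extend across $S$ greedily" does not work for list colouring: the vertices of $S$ carry their own adversarial lists and are constrained simultaneously by both sides, and there is no surplus to exploit when $|S|$ is comparable to the list size; criticality gives you minimum degree at least the list size, but it gives no control over how colourings of the two sides interact on $S$. In your case (b), the density-increment step is not well defined in the list setting: a list assignment does not project to a contracted minor (distinct vertices in a branch set may have disjoint lists), so "performing a density increment by contracting a dense well-connected subgraph" leaves you with no list-colouring statement to recurse on, and the claim that $O(1)$ increments push the density past the Kostochka--Thomason threshold is exactly the quantitative heart of the matter, which you have not supplied. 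The actual arguments in this line of work do not iterate contractions a constant number of times; they build a $K_t$ minor in one shot from many small dense pieces linked through high connectivity (compare Lemma~\ref{matchinglemma} and Theorem~\ref{smallnumberofvertices}, which is the analogous mechanism this paper does use), and the list version requires substantial extra machinery to handle the fact that branch sets cannot share a common palette.

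Finally, the source of the exponent is hand-waved: saying that "each of the $O(1)$ recursion levels contributes a constant-power loss" from Talagrand-type concentration plus the Local Lemma, and that "optimising the exponents yields $(\log\log t)^6$", is a description of a hoped-for bookkeeping outcome, not a derivation; nothing in the sketch identifies where a $\log\log t$ factor (as opposed to $\log t$ or a constant) enters, let alone why the sixth power appears. As written, the proposal is a plausible research programme in the spirit of the cited works, but it has genuine gaps at every quantitative step and does not constitute a proof of the stated theorem.
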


It is important to observe in these theorems and conjectures that having no $K_{t}$-minor is a global condition. It is natural to ask if we can prove similar colouring bounds when only requiring a more local condition. We formalize this notion as follows. Recall that for a graph $G$, a vertex $v$, and a positive integer $c$, we say that the \textit{ball of radius $c$ around $v$}, denoted $B_c[v]$ is the subgraph induced by all vertices at distance at most $c$ from $v$.

\begin{definition}
Let $c$ be a positive integer. We say a graph $G$ is \textit{$c$-locally-$K_{t}$-minor-free} if for every vertex $v \in V(G)$, we  have that $B_c[v]$ is $K_{t}$-minor-free.
\end{definition}

Throughout we use the notation $v(G)$ to denote the number of vertices of $G$, and $e(G)$ to denote the number of edges of $G$. We propose two rather strong generalizations of Hadwiger's conjecture:

\begin{conj}[Local Hadwiger's Conjecture]
\label{localhadwigerconjecture}
Let $t$ be a positive integer. There exists a constant $c_{t}>0$ depending on $t$ such that every graph $G$ which is $\lceil c_{t}\log(v(G)) \rceil$-locally-$K_{t}$-minor-free is $t$-colourable.
\end{conj}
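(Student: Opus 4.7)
The plan is to prove Conjecture~\ref{localhadwigerconjecture} by induction on $v(G)$, using Hadwiger's Conjecture for this value of $t$ as a black-box ingredient. Observe first that the statement already implies Hadwiger's: if $G$ is globally $K_{t}$-minor-free then every ball in $G$ is $K_{t}$-minor-free, the hypothesis is satisfied for any $c_{t}$, and the conclusion is exactly Hadwiger's. Any proof must therefore contain Hadwiger-strength reasoning, so I would assume Hadwiger's for $t$ and focus on the local-to-global reduction.

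The inductive step would rest on a balanced ball separator: in a $\lceil c_{t}\log v(G)\rceil$-locally-$K_{t}$-minor-free graph $G$, find a vertex $v$ and a radius $r=O(\log v(G))$ such that $B:=B_{r}[v]$ disconnects $G$ into components of size at most $(1-\delta)v(G)$ for some $\delta=\delta(t)>0$. To build $B$, run a BFS from a centroid-like root and apply pigeonhole to windows of $r$ consecutive BFS layers: some window must form a balanced separator, and the union of this window with the preceding root ball lies in a ball of radius $O(r)$. Tuning $c_{t}$ generously ensures this ball is $K_{t}$-minor-free, so Hadwiger's furnishes a proper $t$-colouring $\phi_{B}$ of $B$.

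Each component $C$ of $G-B$ inherits the local hypothesis, since its balls are subgraphs of balls in $G$, and satisfies $v(C)\le(1-\delta)v(G)$, so by the induction hypothesis $C$ is $t$-colourable. The remaining task is to extend $\phi_{B}$ across the boundary into each $C$: view $\phi_{B}$ as pre-colouring $N(C)\cap B$ and seek a compatible $t$-colouring of $C$. Since the recursion depth is $O(\log v(G))$, any multiplicative loss per level in the constant $c_{t}$ remains absolute and can be folded into the final $c_{t}$.

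The main obstacle is precisely this extension step for general $t$. Extending a $t$-colouring of a $K_{t}$-minor-free graph from a precoloured subgraph, still using only $t$ colours, is strictly stronger than Hadwiger's Conjecture: recast in list-colouring form it would imply list-Hadwiger with $t$ colours, which the Bar\'at--Joret--Wood construction refutes for infinitely many $t$. Hence the naive ``colour $B$ first and extend'' scheme cannot work in isolation; one must couple the colourings of $B$, its interior and its exterior via a joint recursive decomposition that uses the bounded diameter of $B$ to permute colour classes on each side into alignment. For $t\le 5$ the required coupling is available through Wagner-type structural theorems on $K_{t}$-minor-free graphs (which the paper exploits via list-colouring), and for large $t$ the paper pays extra colours to sidestep the obstruction. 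Closing this gap to exactly $t$ colours for every $t$ is, I expect, where any eventual proof of the full conjecture will have to break new ground.
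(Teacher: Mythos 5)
This statement is a \emph{conjecture} in the paper, not a theorem: the authors do not prove it, and their partial results (Theorems~\ref{smallvaluesoft} and~\ref{largevaluesoft}) deliberately work with list-colouring for $t\le 5$ and with $13\max\{h(t),\lceil\frac{31}{2}(t-1)\rceil\}$ colours for large $t$, precisely because the exact-$t$ statement is out of reach. Your proposal, as you yourself concede in the final paragraph, does not close the conjecture either, so it cannot be accepted as a proof. The decisive gap is the one you name: the scheme ``colour the separator ball by Hadwiger, colour each component by induction, then extend/align across the boundary'' needs a precolouring-extension statement with only $t$ colours for $K_t$-minor-free graphs, which is strictly stronger than Hadwiger's Conjecture and, in its natural list form, is false by the Bar\'at--Joret--Wood construction. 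The suggested repair --- ``permute colour classes on each side into alignment'' --- does not work even in principle, because a component of $G-B$ typically attaches to many boundary vertices carrying several distinct colours, and no single permutation of the component's colour classes can simultaneously respect all of these constraints; this is exactly why the paper retreats to extra colours (via deletable subgraphs) rather than attempting any such coupling.

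There is also a concrete flaw earlier in the sketch, independent of the extension problem. A window of $r$ consecutive BFS layers around a root $v$, together with the preceding root ball, is the ball $B_{j+r}[v]$ where $j$ is the inner radius of the window; it lies in a ball of radius $O(r)$ around \emph{some} vertex only if $j=O(r)$, which fails for graphs of large diameter. Even the induced subgraph on a single annulus need not sit inside any ball of radius $O(\log v(G))$: in the necklace construction of Section~\ref{lowerboundssection} (or already in a long odd cycle), a BFS layer consists of two pieces on opposite sides of the cycle, far apart in $G$. So the local $K_t$-minor-freeness hypothesis cannot be invoked to colour the separator, and the inductive step collapses before the extension issue even arises. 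For contrast, the paper's partial results take a different route entirely: they show that a minimal non-$L$-colourable, locally-$K_t$-minor-free graph contains no deletable subgraph far from any fixed layer, deduce exponential growth of balls (Claims~\ref{neighbourhoodsgrowbound} and~\ref{cl:neighbor}), and derive the contradiction $v(G)>v(G)$ --- no separator or colour alignment is ever needed, at the price of more than $t$ colours.
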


\begin{conj}[Local List Linear Hadwiger's conjecture]
Let $t$ be a positive integer. There exists a constant $c_{1} >0$ depending on $t$, and a constant $c_{2} >0$ such that every graph $G$ which is $\lceil c_{1}\log(v(G)) \rceil$-locally-$K_{t}$-minor-free is $c_{2}t$-list-colourable. 
\end{conj}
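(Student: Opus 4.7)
The plan is to reduce the local version to the global version of the List Linear Hadwiger's Conjecture at a constant-factor cost. Assuming the global result, i.e., that every $K_{t}$-minor-free graph is $h(t)\le ct$-list-colourable for some absolute constant $c$, the strategy is to produce a decomposition $V(G)=X\sqcup Y$ such that (a) $G[X]$ is globally $K_{t}$-minor-free, and (b) $G[Y]$ (including edges into $X$) has maximum degree $O(t)$. Then $G[X]$ can be list-coloured using $h(t)$ colours via the global theorem, and the colouring extends greedily to $Y$ using $O(t)$ additional colours from the degree bound, totalling $O(h(t)+t)=O(t)$ colours.

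The starting observation is local sparsity: since every ball $B_{r}[v]$ with $r=\lceil c_{1}\log v(G)\rceil$ is $K_{t}$-minor-free, by the Kostochka--Thomason theorem each ball has $O(t\sqrt{\log t})\cdot |B_{r}[v]|$ edges and is in particular $O(t\sqrt{\log t})$-degenerate. One constructs $X$ iteratively, processing vertices in a carefully chosen order such as a BFS from an arbitrary root, adding each vertex to $X$ whenever doing so keeps $G[X]$ $K_{t}$-minor-free, and otherwise placing it in $Y$. The local hypothesis and degeneracy should force ``most'' vertices into $X$ without creating a global $K_{t}$-minor, while the pushed-out vertices inherit bounded degree from the $O(t\sqrt{\log t})$-degeneracy of their containing balls, refined to $O(t)$ by a more delicate argument that uses an additional layer of the log-radius slack.

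Even without assuming Linear List Hadwiger's globally, plugging in the current best $h(t)=O(t(\log\log t)^{6})$ should yield an $O(t(\log\log t)^{6})$-list-colouring of locally-$K_{t}$-minor-free graphs, giving strong evidence for the conjecture and matching the global state of the art. This suggests the correct framing is a ``black-box'' reduction: any future improvement to $h(t)$ transports to the local setting with only a constant-factor loss, so the real content is the reduction itself.

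The main obstacle is ensuring $G[X]$ is globally $K_{t}$-minor-free. Local $K_{t}$-minor-freeness is not preserved when one takes subgraphs spanning many balls, since a $K_{t}$-minor can be assembled from branch sets that collectively exceed radius $r$ even while each ball is $K_{t}$-minor-free. Controlling this seems to require either a probabilistic sparsification (deleting each vertex independently with small probability and applying a union bound over candidate $K_{t}$-minors, using the local density to bound the number of such candidates), or a structural certificate (iteratively building a tree-like decomposition of $X$ whose adhesion sets are small, so any global $K_{t}$-minor is forced to concentrate in a single bag and hence in a single ball). Marrying this $X$-construction with the max-degree bound on $Y$, uniformly in $t$, is where the logarithmic radius is essential and where the real technical work lies.
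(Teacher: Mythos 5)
The first thing to say is that this statement is left as a \emph{conjecture} in the paper: what is actually proved is Theorem~\ref{largevaluesoft}, which gives $13\max\{h(t),\lceil\frac{31}{2}(t-1)\rceil\}$-list-colourability of $\lceil c_t\log v(G)\rceil$-locally-$K_t$-minor-free graphs, so the local statement with an absolute constant $c_2$ follows only conditionally on List Linear Hadwiger's Conjecture ($h(t)=O(t)$); unconditionally it is open. Your proposal is likewise conditional (you assume $h(t)\le ct$), which would be acceptable for a reduction, but the reduction itself is not carried out, and the place where it stops is exactly the hard content. Two concrete problems. First, the degree claim for $Y$ does not follow from local degeneracy: Kostochka--Thomason gives $O(t\sqrt{\log t})$-degeneracy of each ball, which bounds back-degrees in a suitable ordering, not degrees; even trees (which are locally-$K_t$-minor-free for every $t\ge 3$) have unbounded maximum degree, so a vertex pushed into $Y$ by your BFS/greedy scheme may see arbitrarily many neighbours, and nothing in the construction prevents this. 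What you actually need is that $Y$ admits an $O(t)$-degenerate ordering after $X$ is coloured, and no argument is offered for that either. Second, the obstacle you yourself identify --- that a global $K_t$ minor can be assembled from branch sets spread across many balls, so keeping $G[X]$ globally $K_t$-minor-free while capturing most vertices is not ensured by any local check --- is precisely where the proposal ends; the probabilistic sparsification sketch has no workable union bound (branch sets can be long paths and candidate minors are not locally enumerable), and the ``tree-like decomposition with small adhesion'' sketch is not substantiated. As written, this is a plan with the key lemma missing, not a proof.

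For comparison, the paper never attempts to extract a globally $K_t$-minor-free induced subgraph. It passes to a minimal list-critical counterexample, which can contain no $f(t)$-deletable subgraph, and proves deletability theorems for $K_t$-minor-free graphs (for large $t$, Theorem~\ref{K_tDeletability}, via the matching lemma of Norin and the second author, Lemma~\ref{matchinglemma}, together with the connectivity theorem of B\"ohme, Kawarabayashi, Maharry and Mohar, Theorem~\ref{smallnumberofvertices}). Applied inside each ball, these give $v(B^i)\le c'_t|N^i(v)|$ for every radius $i$, forcing balls to grow exponentially and contradicting the $\lceil c_t\log v(G)\rceil$ locality radius. That is how the logarithmic radius is actually used, and it is what produces the multiplicative factor $13$ and the $\lceil\frac{31}{2}(t-1)\rceil$ term, rather than the $h(t)+O(t)$ you hope for. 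If you wish to pursue your decomposition route, you would need, at minimum, a lemma asserting that from any locally-$K_t$-minor-free graph one can remove an $O(t)$-degenerate part so that the remainder is globally $K_t$-minor-free; no such statement is known, and proving it appears to be at least as hard as the conditional theorem the paper establishes.
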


Note one cannot replace $t$ with $t-1$ in Conjecture \ref{localhadwigerconjecture}. In particular when $t=2$, the odd cycle $C_{2k+1}$ is $(k-1)$-locally-$K_{3}$-minor free and yet odd cycles are not $2$-colourable. More generally, in Section~\ref{lowerboundssection}, we provide a construction of $\Omega(v(G))$-locally-$K_t$-minor-free graphs which are not $(t-1)$-colourable for all integers $t\ge 2$, thus showing that Conjecture \ref{localhadwigerconjecture} is in fact best possible for every $t$. Additionally, one needs the locality condition to depend on $v(G)$, due to the classic result of Erd\H{o}s which says there are graphs of large girth and large chromatic number (see for instance, \cite{diestel}).  

Our first result is that Conjecture~\ref{localhadwigerconjecture} is true when $t \leq 5$, even in the setting of list colouring. 

\begin{thm}
\label{smallvaluesoft}
For each $t \in \{3,4,5\}$, there exists a positive integer $c_{t}$ such that if $G$ is a graph which is $\lceil c_{t}\log(v(G)) \rceil$-locally-$K_{t}$-minor-free, then $G$ is $t$-list-colourable.
\end{thm}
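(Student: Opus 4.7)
I would prove Theorem~\ref{smallvaluesoft} for each $t\in\{3,4,5\}$ by strong induction on $v(G)$ using a common template: find a \emph{reducible configuration} in $G$ whose removal yields a smaller graph $G'$ still satisfying the hypothesis, apply the inductive hypothesis to $G'$, and extend the $L$-colouring. The base case is $v(G)\le n_0(t)$: by choosing $c_t$ large, $\lceil c_t\log v(G)\rceil\geq\mathrm{diam}(G)$ and so $G$ itself is $K_t$-minor-free, so the classical $t$-choosability of $K_t$-minor-free graphs applies---trivially for $t=3$, via $2$-degeneracy of $K_4$-minor-free graphs for $t=4$, and via \v{S}krekovski's theorem for $t=5$.

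The $t=3$ case is clean: $c$-locally-$K_3$-minor-free is exactly the condition ``girth $>2c+1$'', and a standard Moore-bound counting rules out $\delta(G)\ge 3$ once $c_3$ is large. Thus $G$ has a vertex of degree at most $2$, which is the reducible configuration; delete it, invoke induction (subgraphs inherit the hypothesis), and extend using the free colour in any list of size $3$.

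For $t\in\{4,5\}$ the combinatorial crux is to show: if $\delta(G)\ge t$ then some ball $B_c[v]$ contains a $K_t$-minor, contradicting the hypothesis. The key leverage is that every $u\in B_{c-1}[v]$ has $\deg_{B_c[v]}(u)=\deg_G(u)\ge t$, since all $G$-neighbours of $u$ lie within distance $c$ of $v$. Combined with the Mader edge-bound $e(H)\le(t-2)v(H)-\binom{t-1}{2}$ for $K_t$-minor-free $H$, this should force the BFS layers of $B_c[v]$ to grow; iterating over the $\Theta(\log v(G))$ layers then contradicts $|B_c[v]|\leq v(G)$, yielding $\delta(G)\le t-1$ and the desired degree-$<t$ reducible vertex.

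The main obstacle is that the bare density argument yields only linear growth per layer (the $\binom{t-1}{2}$ slack being absorbed by $O(1)$ boundary vertices), insufficient over $\Theta(\log v(G))$ layers. To amplify this to super-polynomial growth, I anticipate exploiting the $2$-degeneracy of $K_4$-minor-free graphs (for $t=4$) and Wagner's decomposition of $K_5$-minor-free graphs (for $t=5$) to peel low-degree boundary vertices iteratively inside each ball, so that the remaining core forces the interior-degree bound to dominate. The $t=5$ case is the most delicate, since $K_5$-minor-free graphs can have minimum degree $5$ (e.g., the icosahedron), so a ``vertex-of-degree-$<5$'' reducible configuration cannot be guaranteed even globally; here a \v{S}krekovski-style reducible substructure drawn from a Wagner decomposition of $B_c[v]$, carefully located in the interior $B_{c-1}[v]$ so that its $G$-neighbourhood coincides with its $B_c[v]$-neighbourhood, will likely be required.
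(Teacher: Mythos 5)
Your $t=3$ argument, the reduction to a minimum (critical) counterexample, and the base case are all fine and broadly parallel the paper. The genuine gap is at what you yourself call the ``combinatorial crux'' for $t\in\{4,5\}$: the claim that $\delta(G)\ge t$ forces a $K_t$ minor inside some ball of radius $\Theta(\log v(G))$. For $t=4$ this claim is simply false, so no amount of peeling low-degree boundary vertices can rescue it: take $G=C_n^2$, the square of a cycle. It is $4$-regular, and every ball of radius $c$ with $n\ge 4c+3$ is an induced copy of the square of a path, which has treewidth $2$ (bags $\{v_i,v_{i+1},v_{i+2}\}$) and hence no $K_4$ minor; so $C_n^2$ is $\Omega(v(G))$-locally-$K_4$-minor-free with minimum degree $4$. (It is of course not a counterexample to the theorem: it is not $4$-list-critical, because each set of four consecutive vertices induces a $K_4-e$ whose outside degrees are small enough that any colouring of the rest extends --- but that is exactly a reducible configuration beyond a low-degree vertex.) For $t=5$ you already note the analogous failure (large $5$-regular planar graphs), and the fix you gesture at --- ``a \v{S}krekovski-style reducible substructure drawn from a Wagner decomposition'' --- is precisely the part that needs to be proved and is not; \v{S}krekovski's precolouring-extension theorem handles a coboundary of size at most $3$, but by itself gives no control on how large a $K_5$-minor-free ball can be relative to its boundary sphere, which is what any growth argument over $\Theta(\log v(G))$ layers needs.

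What the paper proves instead, and what your outline is missing, is a quantitative deletable-subgraph theorem: for $t\le 5$, every $K_t$-minor-free graph with no $t$-deletable subgraph disjoint from a set $X$ has at most $O_t(|X|)$ vertices (for $t=4$ via $2$-smooth tree decompositions, where long paths of degree-$2$ bags produce induced $K_4-e$ pockets with bounded outside degrees; for $t=5$ via Wagner's decomposition combined with \v{S}krekovski's theorem and, crucially, the Postle--Thomas hyperbolicity theorem for planar graphs, a deep ingredient for which your sketch offers no substitute). Applying this with $X=N^i(v)$, the sphere of radius $i$, inside a minimal non-$L$-colourable graph --- which contains no $t$-deletable subgraph at all, not merely no vertex of degree less than $t$ --- yields $v(B_i[v])\le c'_t|N^i(v)|$, hence geometric growth of balls and a contradiction with $v(G)$ after $\lceil c_t\log v(G)\rceil$ layers. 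So the skeleton you propose (criticality, a reducible configuration located in the interior of a ball so its $G$-degrees are its ball-degrees, growth) is the right one, but the reducible configurations must be these larger deletable subgraphs together with a linear-in-the-sphere bound on ball sizes; minimum degree alone cannot do the job.
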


For large values of $t$, we show that if $h(t)$ is the smallest value such that all graphs with no $K_{t}$ minor are $h(t)$-list-colourable, then either $13h(t)$-suffices in the local setting or a linear function in $t$ suffices. More precisely, we prove the following. 

\begin{thm}
\label{largevaluesoft}
Let $t$ be a positive integer. There exists a constant $c_{t} >0$ depending on $t$ such that if $G$ is $\lceil c_{t}\log(v(G)) \rceil$-locally-$K_{t}$-minor-free, then $G$ is $13k$-list-colourable, where $k = \max\left\{h(t),\left\lceil \frac{31}{2}(t-1) \right\rceil \right\}$ and $h(t)$ is the smallest value such that all $K_t$-minor-free graphs are $h(t)$-list-colourable. 
\end{thm}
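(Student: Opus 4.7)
The plan is to reduce the local list-colouring problem to a partition-and-disjoint-palette argument, leveraging the $k$-list-colourability of $K_t$-minor-free graphs. Specifically, I would construct a vertex partition $V(G) = V_1 \sqcup \cdots \sqcup V_{13}$ such that each connected component of each $G[V_j]$ is contained in a ball $B_R[w]$ with $R = \lceil c_t \log v(G) \rceil$. By hypothesis, every such ball is $K_t$-minor-free, so each $G[V_j]$ is a disjoint union of $K_t$-minor-free subgraphs and is therefore $k$-list-colourable since $k \geq h(t)$. Then, using a standard disjoint-palette argument (extracting from each list of size $13k$ a sublist of size $k$ corresponding to the vertex's assigned part), we list-colour each part independently and the combined assignment is a valid $13k$-list-colouring of $G$, since edges between different parts automatically receive colours from disjoint blocks.

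To construct the partition, I would start from a BFS forest rooted at an arbitrary vertex $v_0$ and work in two stages. The first stage uses BFS-layer modular arithmetic: assigning each vertex to a residue class modulo an integer $m \geq 2$ restricts edges within the same class to lie in a single BFS layer. The second stage further subdivides within each layer by iteratively carving out balls of radius roughly $R/2$ around well-chosen centres; overlapping balls are handled by colouring an auxiliary conflict graph whose chromatic number is bounded by the degeneracy term $\lceil \frac{31}{2}(t-1) \rceil$ inherited from $K_t$-minor-free subgraphs. The specific constant $13$ should emerge from this auxiliary colouring step. Since both BFS and ball-carving are inherently local operations executable in $O(R) = O(\log v(G))$ rounds, this construction is also the likely source of the companion distributed algorithm.

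The main obstacle is reconciling two competing constraints: the parts must together cover $V(G)$, yet every connected component of every $G[V_j]$ must fit inside a ball of radius at most $R$ to inherit the $K_t$-minor-free property. The two terms in $k = \max\{h(t), \lceil \frac{31}{2}(t-1)\rceil\}$ play distinct roles: the first controls the list-colouring within each part, while the second controls the auxiliary ball-overlap colouring that bounds the number of parts at $13$. The technical heart of the argument lies in quantifying this ball-overlap carefully so that exactly $13$ parts suffice, while also verifying that the disjoint-palette list-extraction can be carried out on lists of size precisely $13k$ (rather than a larger slack); this latter step may require either a clever deterministic block partition or a probabilistic argument in the spirit of Alon--Krivelevich.
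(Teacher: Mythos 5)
Your reduction has a fatal gap at the ``disjoint-palette'' step. Splitting each list of size $13k$ into $13$ blocks of size $k$ and assigning block $j$ to part $V_j$ does not prevent conflicts across parts: the lists are arbitrary and different at different vertices, so a colour lying in block $1$ of $L(u)$ may perfectly well lie in block $2$ of $L(v)$ for an edge $uv$ with $u\in V_1$, $v\in V_2$. More fundamentally, list chromatic number is not subadditive over vertex partitions: $K_{n,n}$ is the union of two independent sets, each $1$-list-colourable, yet its list chromatic number grows like $\log n$. So ``$13$ parts, each $k$-list-colourable'' does not imply $13k$-list-colourable by any palette trick, deterministic or probabilistic; what one needs is control of the \emph{back-degree}, i.e.\ a bound (here $12k$) on the number of neighbours a vertex has in the parts coloured before it, so that a $13k$-list retains $k$ usable colours. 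Your plan provides no such degree bound, and your hedge at the end (``clever block partition or Alon--Krivelevich'') cannot repair this, since the obstruction is information-theoretic rather than technical. There are secondary problems as well: a connected component of a single BFS layer need not lie in any ball of radius $O(\log v(G))$ (a long path all at the same distance from the root), so the first stage of your partition does not do what you claim, and the constant $\lceil \tfrac{31}{2}(t-1)\rceil$ has nothing to do with an ``overlap'' chromatic number --- in the paper it enters through the connectivity threshold of B\"ohme--Kawarabayashi--Maharry--Mohar that forces a $K_t$ minor in large highly connected graphs.

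The paper's route is different precisely because it builds the back-degree control into the key notion: a subgraph $H$ is $r$-deletable if it is colourable from lists of size $r-(d_G(v)-d_H(v))$, i.e.\ after its outside neighbours are already coloured. The main technical theorem (proved via the Norin--Postle matching lemma and the connectivity theorem above) says that a $K_t$-minor-free graph with no $13k$-deletable subgraph disjoint from $X$ has at most $N(t)\,m^G_{6k}(X)$ vertices. Then one takes a vertex-minimal non-$L$-colourable graph $G$, notes it cannot be $K_t$-minor-free, and shows that every ball $B^i$ around a fixed vertex satisfies $v(B^i)\le c'_t|N^i(v)|$ --- otherwise $B^i$ contains a $13k$-deletable subgraph disjoint from its outer shell, which would be deletable in $G$, contradicting criticality. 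This forces exponential growth of balls, so a locality radius of $\lceil c_t\log v(G)\rceil$ yields $v(G)>v(G)$, a contradiction. If you want to salvage a partition-style argument, you would have to replace ``each part is $k$-list-colourable'' by a statement of deletability type that accounts for edges leaving the part, which is essentially reconstructing the paper's machinery.
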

 
As mentioned earlier, we also give a construction showing $t$ would be tight in Conjecture~\ref{localhadwigerconjecture} as follows.

\begin{obs}
\label{construction}
For each integer $t\ge 3$ and positive integer $k$, there exists a graph $G$ on $k(t-1)+1$ vertices which is $\lfloor k/2 \rfloor$-locally-$K_{t}$-minor-free but not $(t-1)$-colourable.
\end{obs}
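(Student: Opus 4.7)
The plan is to take $G = C_n^{t-2}$, the circular graph power on $n = k(t-1)+1$ vertices $v_0, v_1, \ldots, v_{n-1}$, where $v_i v_j \in E(G)$ precisely when the cyclic distance $d_C(i,j) := \min(|i-j|, n-|i-j|)$ lies in $\{1,2,\ldots,t-2\}$. For $t=3$ this recovers the odd cycle $C_{2k+1}$ that is mentioned explicitly in the excerpt, and the general construction is the natural extension.

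To show $G$ is not $(t-1)$-colourable, I will use that any $t-1$ consecutive vertices on the cycle form a clique in $G$. Thus in any proper $(t-1)$-colouring $f$, each window $\{v_i, \ldots, v_{i+t-2}\}$ exhausts all $t-1$ colours; the next vertex $v_{i+t-1}$ is adjacent to $v_{i+1}, \ldots, v_{i+t-2}$ but not to $v_i$, so it is forced to take the colour of $v_i$. Iterating this periodicity $k$ times gives $f(v_0) = f(v_{k(t-1)}) = f(v_{n-1})$, contradicting the edge $v_0 v_{n-1}$.

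For the local $K_t$-minor-freeness, a single edge of $G$ bridges at most $t-2$ cyclic units, so the graph distance in $G$ from $v_i$ to $v_j$ is exactly $\lceil d_C(i,j)/(t-2) \rceil$. Hence with $r = \lfloor k/2 \rfloor$, the ball $B_r[v_i]$ consists of the vertices within cyclic distance $r(t-2)$ of $v_i$. The inequality $2r(t-2)+1 \leq k(t-2)+1 < k(t-1)+1 = n$ ensures this set is a contiguous arc (not wrapping around), so the induced subgraph is isomorphic to the path power $P_m^{t-2}$ with $m = 2r(t-2)+1$. A linear tree decomposition with bags $\{v_j, v_{j+1}, \ldots, v_{j+t-2}\}$ certifies that $P_m^{t-2}$ has treewidth at most $t-2$; since $K_t$ has treewidth $t-1$ and treewidth is minor-monotone, $P_m^{t-2}$ contains no $K_t$ minor.

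The only non-routine ingredient is the chromatic lower bound, and even that reduces to the clean sliding-clique argument above; the main place to be careful is the arc condition $2\lfloor k/2\rfloor(t-2) < k(t-1)$, which holds for every $t\ge 3$ and $k\ge 1$. Everything else is bookkeeping on the circular structure.
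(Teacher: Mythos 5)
Your colouring argument is fine, but the locality half has a genuine gap: the condition you check, $2\lfloor k/2\rfloor(t-2)+1<n$, only guarantees that the radius-$\lfloor k/2\rfloor$ ball does not cover the whole cycle; it does not rule out chords running ``the other way around'' between vertices near the two ends of the arc. Two arc vertices at positions $-a$ and $b$ (measured from the centre) are also adjacent whenever $n-(a+b)\le t-2$, so the induced ball is the path power $P_m^{t-2}$ only if $n-2\lfloor k/2\rfloor(t-2)\ge t-1$. For odd $k$ this leftover gap is $k+t-1$ and you are safe, but for even $k$ it is only $k+1$, so your isomorphism claim fails whenever $k\le t-3$. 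This is not merely a cosmetic issue: for $t=5$, $k=2$ the graph is $C_9^3$ and the radius-$1$ ball around $v_0$ is the arc $v_6,\dots,v_3$ together with the chord $v_3v_6$; contracting $\{v_3,v_6\}$ and taking $v_8,v_0,v_1,v_2$ as singleton branch sets gives a $K_5$ minor inside the ball, so the graph is not $1$-locally-$K_5$-minor-free. More generally, for $k=2$ and any $t\ge 5$ the ball is $C_{2t-1}^{t-2}$ minus two vertices and one can exhibit a $K_t$ minor (take the singletons $v_0,\dots,v_{t-2}$ together with the connected set $\{v_{-1},\dots,v_{-(t-2)}\}$, which reaches $v_{t-2}$ through a back chord). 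Since the observation quantifies over every positive integer $k$, the cycle-power construction as proposed does not prove it.

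The paper avoids this by using a different construction: a ``necklace'' of $k$ copies of $K_t$ minus an edge, glued along the degree-$(t-2)$ vertices, with one extra edge closing the cycle. There the non-$(t-1)$-colourability comes from the forced repetition of the colour on the identified vertices (analogous to your sliding-clique argument), while local $K_t$-minor-freeness follows from a block-decomposition argument: a ball of radius $\lfloor k/2\rfloor$ never wraps around the necklace, its blocks are copies of $K_t$ minus an edge (or smaller), and any $K_t$ minor would have to live inside a single block, which is impossible. Your approach could be salvaged for odd $k$ or for $k\ge t-2$, but to cover all $k$ you would need either a case analysis with a different construction for small even $k$, or to switch to a gluing construction of the paper's type whose local structure degenerates into blocks rather than wrapping into long chords.
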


To prove these theorems, we utilize the theory of hyperbolic families of graphs developed by the second author and Thomas~\cite{Hyperbolicfamilies}. This theory has been used to great success for colouring graphs on surfaces, see~\cite{Hyperbolicfamilies}. To utilize this theory, we need to prove certain precolouring extension type results for graphs with no $K_{t}$ minor. 

We need some definitions to properly state these results. First we define a notion of criticality with respect to a list assignment and a precoloured set.

\begin{definition}
Let $G$ be a graph, $L$ a list assignment of $G$, and $X$ a subgraph of $G$. We say $G$ is \emph{$X$-critical with respect to $L$}, or \emph{$(X,L)$-critical}, if for every proper subgraph $H$ of $G$ containing $X$, there exists an $L$-colouring of $X$ that extends to $H$ but not to $G$.
\end{definition}

We recall the second author's definition of deletable subgraphs from~\cite{Postlealgorithms}. Note for a vertex $v$ in a graph $G$, we use $d_{G}(v)$ to denote the \textit{degree} of $v$ in $G$. 

\begin{definition}
Let $G$ be a graph, $H$ be a non-empty induced subgraph of $G$, and $r$ a positive integer. We say that $H$ is \emph{$r$-deletable} if for every list assignment $L$ of $H$ such that all $v\in V(H)$ satsify $\abs{L(v)}\geq r-(d_G(v)-d_H(v))$, we have that $H$ is $L$-colourable.
\end{definition}

This is an extremely strong definition of deletability. Indeed, the definition immediately implies that, if $H$ is $r$-deletable, then for any $r$-list assignment $L$ of $G$, any $L$-colouring of $G-H$ extends to an $L$-colouring of $G$. In particular, that implies that if $G$ is $(X,L)$-critical for some subgraph $X$ of $G$, then $G$ does not contain an induced $r$-deletable subgraph disjoint from $X$. We propose a rather strong conjecture as follows.

\begin{conj}
\label{deletablesubgraphconjecture}
There exists a constant $k$ such that for every $t$, there is a constant $c_{t}$ depending on $t$ such that if $G$ is a $K_{t}$-minor-free graph, $X\subseteq V(G)$ and $G$ has no $kt$-deletable subgraph disjoint from $X$, then $v(G) \leq c_{t}|X|$.
\end{conj}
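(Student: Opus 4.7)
The plan begins with the degree-based observation that any vertex $v \in V(G) \setminus X$ with $d_G(v) \leq kt - 1$ is itself a $kt$-deletable singleton: the list condition $|L(v)| \geq kt - d_G(v) \geq 1$ suffices to colour $v$. Thus we may assume $G - X$ has minimum degree at least $kt$.

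More generally, an induced subgraph $H$ disjoint from $X$ is $kt$-deletable whenever, for every list assignment $L$ on $H$ with $|L(v)| \geq kt - (d_G(v) - d_H(v))$, the graph $H$ is $L$-colourable. In particular, if $H$ is planar and every $v \in V(H)$ has external degree $d_G(v) - d_H(v) \leq kt - 5$, then Thomassen's $5$-list-colourability theorem produces the required $L$-colouring, making $H$ a $kt$-deletable subgraph for any absolute $k \geq 5$. So the task reduces to locating a planar induced subgraph of $G - X$ whose vertices all have small external degree relative to $kt$.

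My attack on this reduced goal would be the Robertson--Seymour graph minors structure theorem: a sufficiently large $K_t$-minor-free graph admits a tree decomposition whose pieces are almost-embeddable in surfaces of bounded genus, with boundedly many apex vertices and vortices of bounded depth. Inside a large \emph{flat} region of such a piece, disjoint from the apex vertices, the vortex societies, the adhesion sets of the decomposition, and $X$, one can isolate a genuinely planar disc whose boundary lies on a short cycle shared with the rest of $G$. Interior vertices of such a disc have no external neighbours, and boundary vertices have bounded external degree; the constant $c_t$ would absorb the $t$-dependent counting of apex vertices, vortex depths, adhesion sizes, and tree-decomposition bags that must be avoided, while the deletability parameter $k$ stays absolute through the planar $5$-list-colouring.

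The principal obstacle is isolating such a truly clean planar disc: vortices in the structure theorem are anchored to the boundaries of almost-embeddable pieces and can propagate structure inward in a $t$-dependent way, so ensuring that the disc's boundary is both short and free of vortex interference is delicate. A softer alternative, bypassing the structure theorem, is to argue by iterated contraction: starting from the min-degree-at-least-$kt$ hypothesis on $G - X$, grow disjoint connected subgraphs that eventually contract to yield a $K_t$ minor, contradicting $K_t$-minor-freeness. Controlling that such a contraction argument keeps $k$ independent of $t$ is the subtlest point; indeed, in my view, the genuine difficulty of the conjecture is exactly the decoupling of the deletability parameter $k$ from $t$, and this is where I would expect the proof to require the most ingenuity.
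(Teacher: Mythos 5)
This statement is a conjecture, and the paper does not prove it: it only establishes the case $t\le 5$ with $k=1$ (Theorems~\ref{K3deletability}, \ref{K4minorfree}, \ref{K5minorfree}, via treewidth/Wagner tree decompositions and explicit deletability lemmas plus careful counting), and a weaker large-$t$ analogue (Theorem~\ref{K_tDeletability}) in which the deletability parameter is $13\max\{h(t),\lceil\frac{31}{2}(t+1)\rceil\}$ rather than $kt$ for an absolute $k$ --- so the conjecture as stated remains open, and would only follow from such results if Linear List Hadwiger's Conjecture were known. Your proposal is a plan rather than a proof, and you candidly flag the central difficulty (decoupling $k$ from $t$), which is precisely what is not resolved.

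Beyond being incomplete, two steps in the sketch are genuinely problematic. First, in the Robertson--Seymour structure theorem the apex vertices of a piece may be adjacent to arbitrarily many vertices of any flat disc, so your claim that ``interior vertices of such a disc have no external neighbours'' fails: their external degree is only bounded by the number of apexes, which is a function of $t$ with no absolute-constant linear bound, and vortices cause the same issue near the disc boundary. Hence Thomassen's planar $5$-list-colouring does not yield $kt$-deletability for an absolute $k$ by this route; it yields at best a deletability parameter depending on the structure-theorem constants, which is essentially the situation the paper is already in (and which it handles instead via the Norin--Postle matching lemma, Lemma~\ref{matchinglemma}, and the connectivity theorem, Theorem~\ref{smallnumberofvertices}). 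Second, even granting a supply of deletable subgraphs in ``large'' graphs, the conjecture demands the quantitative conclusion $v(G)\le c_t|X|$; your sketch gestures at absorbing constants but contains no counting argument relative to $X$, whereas in the paper's partial results this counting (leaf-bag and degree-two-path analysis of tree decompositions for $t\le 5$, and the potential $m^{G}_{6k}(X)$ with the matching/connectivity machinery for large $t$) is where most of the work lies.
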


We prove this for small values of $t$ with a best possible value of $k$.

\begin{thm}
Conjecture \ref{deletablesubgraphconjecture} is true with $k=1$ when $t \leq 5$.
\end{thm}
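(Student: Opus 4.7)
The proof splits into cases $t \in \{3, 4, 5\}$ (the cases $t \le 2$ being vacuous). In every case the starting point is the same observation: any vertex $v \in V(G) \setminus X$ with $d_G(v) \le t - 1$ yields the singleton $H = \{v\}$ as a $t$-deletable subgraph, because the list-size requirement $|L(v)| \ge t - d_G(v) \ge 1$ is trivially enough to colour a single vertex. So from now on every vertex of $V(G) \setminus X$ has degree at least $t$ in $G$, and the rest of the argument combines this degree lower bound with the sparsity of $K_t$-minor-free graphs.

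\textbf{Cases $t = 3, 4$.} For $t = 3$, $G$ is a forest, the degree bound forces every leaf of $G$ into $X$, and the elementary forest inequality $|\{v : d_G(v) \ge 3\}| \le (\text{number of leaves}) - 2$ gives $|V(G) \setminus X| \le |X| - 2$ and so $v(G) \le 2|X|$. For $t = 4$, I use the tree-width $\le 2$ decomposition afforded by $K_4$-minor-freeness: fix a tree decomposition $(T, \{B_t\})$ with $|B_t| \le 3$ in which every bag contributes at least one ``private'' vertex (a vertex appearing in no other bag). Every private vertex has all its $G$-neighbours in its one bag and hence has degree at most $2$ in $G$, so must lie in $X$. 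Thus $|T| \le |X|$, and $v(G) \le \sum_t |B_t| \le 3|T| \le 3|X|$.

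\textbf{Case $t = 5$.} Here I invoke Wagner's structure theorem: $G$ is built from $3$-clique-sums of planar graphs and the Wagner graph $V_8$. I locate a ``leaf piece'' $P$ of the clique-sum decomposition---glued to the rest of $G$ along a single clique $K$ with $|K| \le 3$---whose interior $H := V(P) \setminus K$ is disjoint from $X$. Such a piece exists whenever $v(G)$ is much larger than $|X|$, by peeling off clique-sum layers that intersect $X$ (at the cost of a multiplicative constant in $c_5$). Each $v \in H$ then has at most $|K| \le 3$ neighbours outside $H$, so the list-size condition reduces to $|L(v)| \ge 5 - 3 = 2$, and $L$-colourability of $H$ follows from a precolouring-extension lemma in the style of Thomassen's $5$-choosability of planar graphs (with the phantom colouring of $K$ playing the role of a precoloured boundary); the $V_8$ leaf pieces are handled by a finite check.

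The hardest part will be the $t = 5$ case: specifically, the planar $5$-list-colouring extension needed to certify $5$-deletability of $H$ must be tailored to $3$-clique boundaries rather than facial cycles, and the $V_8$ case, while finite, is non-planar and must be dispatched separately. The location of a sufficiently $X$-free leaf piece, though conceptually easy via peeling, is what forces $c_5$ to be a nontrivial constant rather than the clean $c_3 = 2$ or $c_4 = 3$ found in the earlier cases.
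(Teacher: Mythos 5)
Your reduction to the case where every vertex outside $X$ has degree at least $t$ is correct (a low-degree singleton is indeed $t$-deletable), and your $t=3$ argument is essentially the paper's handshake count in a forest. The $t=4$ case, however, has a genuine gap: a width-$2$ tree decomposition in which \emph{every} bag has a private vertex simply does not exist in general. Take the squared path $P_n^2$ (vertices $v_1,\dots,v_n$, edges $v_iv_{i+1}$ and $v_iv_{i+2}$), a $2$-tree and hence $K_4$-minor-free: any vertex private to a bag of size at most $3$ has degree at most $2$ in $G$, but $P_n^2$ has only two such vertices, so for $n\ge 7$ no such decomposition exists. Worse, this example shows that \emph{no} argument using only singleton deletability can work for $t=4$: in $P_n^2$ only four vertices have degree at most $3$, so forcing low-degree vertices into $X$ bounds nothing, yet $n$ is arbitrary. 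The missing idea is a non-singleton deletable subgraph along long path-like stretches of the decomposition; the paper works with a $2$-smooth tree decomposition and shows (Theorem~\ref{K4minorfree}) that six consecutive degree-two nodes of the decomposition tree force an induced $K_4-e$ whose two degree-two vertices see at most two outside neighbours and whose degree-three vertices see at most one, which is $4$-deletable, and it is this that makes $|X|$ linear in the length of such stretches.

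The $t=5$ case has two analogous gaps. First, your leaf-peeling cannot be charged to $X$: a single piece of the clique-sum decomposition containing one vertex of $X$ can be arbitrarily large (indeed $G$ may be a single planar piece with a few scattered $X$-vertices and no clique sums at all), so removing pieces that meet $X$ does not cost only ``a multiplicative constant in $c_5$''. What is needed for exactly this situation is the Postle--Thomas result quoted as Theorem~\ref{Lukestheorem}: a planar graph with no $5$-deletable subgraph disjoint from $X$ has $v(G)\le c|X|$; nothing in your sketch substitutes for it, and a Thomassen/\v{S}krekovski-style precolouring extension only certifies deletability of pieces cut off by a single clique of size at most $3$ (that part of your plan is fine and matches the paper's use of Theorem~\ref{K5choosability}). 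Second, as in the $t=4$ case, path-like decompositions defeat a leaves-only argument: an internal piece is glued along two triangles, so its interior vertices can have up to six neighbours outside, and its interior is not $5$-deletable in general. The paper handles this by massaging the Wagner decomposition (Lemma~\ref{Nicedecomposition}) so that along any path of degree-two nodes the planar bags alternate with $V_8$ bags, and each degree-two $V_8$ bag, being triangle-free and $3$-regular, contributes at least four vertices of degree at most $4$, hence of $X$; combining this with Theorem~\ref{Lukestheorem} applied bag-by-bag gives the linear bound of Theorem~\ref{K5minorfree}. Your proposal is missing both of these ingredients.
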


We prove a weaker version of Conjecture \ref{deletablesubgraphconjecture} when $t$ is large by showing there exists no $t$-deletable subgraph where $t$ depends on the bounds known for Linear List Hadwiger's Conjecture. With these results combined with some ideas from the theory of hyperbolic families, we deduce Theorem \ref{smallvaluesoft} and Theorem \ref{largevaluesoft}.

While we have not mentioned it so far, this work was motivated by distributed algorithms. For large scale networks, it can be very useful and more efficient to have many different computers running in parallel to achieve a single goal. There are many different models for distributed computing, and for this work we use the LOCAL model of computation. In the LOCAL model, every vertex has its own processor, computation on these processors is free, and time is measured in \emph{rounds}. Every round, vertices can send messages of unlimited size to their neighbours. The \emph{runtime} of an algorithm is defined as the number of rounds required to complete the computation, that is to have every vertex output its part of the solution (e.g.~its colour in a colouring). \emph{Efficient} deterministic algorithms are those whose run time is polylogarithmic in the number of vertices. Indeed, this is natural, since in $v(G)$ rounds, each vertex will have complete information on its component which when combined with unlimited processing power is sufficient to brute force a solution. 

In this paper, we ask: Given a graph $G$ with no $K_{t}$ minor, and a list assignment $L$, how many rounds does a distributed algorithm require in the LOCAL model to $L$-colour $G$ (assuming such a colouring exists)? Hence we are asking for an algorithmic version of the above existence results. 

When $t=3$, note that $K_{3}$-minor-free graphs are simply trees. For this case, distributed algorithms were given by Cole and Vishkin~\cite{CV1986} and by Goldberg, Plotkin and Shannon~\cite{GoldbergPlotkin} which $3$-colour a tree $G$ in $O(\log^*(v(G)))$ time (recall that $\log^{*}$ denotes the number of times the logarithm function must be applied before the value is below one). However, to the best of the authors' knowledge, for all larger values of $t$ there is no literature on this topic. 

However, for graphs embeddable in some fixed surface, much work has been done. Goldberg, Plotkin and Shannon~\cite{GoldbergPlotkin} gave a deterministic distributed algorithm for $7$-colouring a planar graph $G$ in $O(\log(v(G)))$ rounds. In 2019, Aboulker, Bonamy, Bousquet, and Esperet~\cite{aboulker2018distributed} gave a deterministic distributed algorithm for $6$-list-colouring a planar graph $G$ in $O(\log^3(v(G)))$ rounds. The second author~\cite{Postlealgorithms}, using the theory of hyperbolic families, gave a deterministic distributed algorithm for every fixed surface $\Sigma$ to $5$-list-colour a graph $G$ embedded in $\Sigma$ in $O(\log(v(G)))$ rounds. 

For convenience, we say a list assignment $L$ of a graph $G$ is \emph{type $345$} if $L$ is a $k$-list-assignment for some $k \in \{3,4,5\}$ and $G$ has girth at least $8-k$. Here is the second author's result:

\begin{thm}[\cite{Postlealgorithms}]
\label{lukesalgorithmtheorem}
For each surface $\Sigma$, there exists a deterministic distributed algorithm in the LOCAL model that, given a
graph $G$ embeddable in $\Sigma$ and a type $345$ list-assignment $L$ for $G$, finds an $L$-colouring of $G$ (if it
exists) in $O(\log v(G))$ rounds.
\end{thm}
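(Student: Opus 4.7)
The plan is to leverage the theory of hyperbolic families of Postle--Thomas. The first step is to show that the class of $(X,L)$-critical graphs embedded in $\Sigma$ with a type $345$ list-assignment forms a \emph{strongly hyperbolic family}. Concretely, one must establish a linear isoperimetric inequality of the form $v(G) \le \alpha\cdot e(X) + \beta$, where $\beta$ depends only on $\Sigma$, for every $(X,L)$-critical $G$. The inputs to this step are precolouring-extension lemmas in the spirit of Thomassen's five-list-colouring theorem and its analogues for $4$-list-colouring of girth-four planar graphs and $3$-list-colouring of girth-five planar graphs: if a critical $G$ has a vertex far from $X$, then $G$ contains a bounded-size reducible configuration in that region, contradicting criticality.

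Once strong hyperbolicity is in hand, the Postle--Thomas machinery yields a canonical decomposition of any $(X,L)$-critical $G$ into a ``core'' subgraph of size bounded by a function of $\Sigma$ only, together with a collection of cylindrical pieces each of which can be $L$-coloured after the core is coloured, and whose radii are $O(\log v(G))$. In particular, every vertex of $G$ lies within distance $O(\log v(G))$ of the core. The hard part will be obtaining sharp enough constants in the reducibility lemmas to plug into this framework; adapting the existing surface-colouring arguments is technical but mostly routine, with care needed around the girth condition in the $k \in \{3,4\}$ cases.

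For the distributed algorithm itself, each vertex gathers its $O(\log v(G))$-ball in $O(\log v(G))$ rounds, and then locally computes a \emph{canonical} decomposition of $G$ as above, made canonical by a globally fixed tie-breaking rule based on vertex identifiers. The core has constant size depending only on $\Sigma$ and can be $L$-coloured by exhaustive search. Each vertex then outputs its own colour as determined by the canonical extension of the core's colouring to the cylindrical piece containing it, which it sees entirely within its $O(\log v(G))$-ball. The main algorithmic subtlety is ensuring the decomposition and extension procedure are simultaneously deterministic, locally computable, and consistent across overlapping vertex views; this reduces to exporting the structural proof of hyperbolicity into a purely combinatorial, label-driven procedure on the embedded graph.
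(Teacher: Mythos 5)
Your first paragraph identifies the right structural input: a linear isoperimetric inequality $v(G)\le \alpha\, v(X)+\beta$ for $(X,L)$-critical graphs with type $345$ lists (i.e.\ strong hyperbolicity in the Postle--Thomas sense), proved via Thomassen-style precolouring-extension/reducibility lemmas. That matches the role hyperbolicity actually plays. The gap is in the algorithmic superstructure. First, the theorem is about an \emph{arbitrary} graph $G$ embeddable in $\Sigma$, not an $(X,L)$-critical one; the ``constant-size core plus cylindrical pieces of radius $O(\log v(G))$'' decomposition you invoke is a statement about critical graphs, and for a generic input (say a large planar grid with $5$-lists) there is no nonempty critical subgraph and no meaningful core, so the claim that every vertex lies within $O(\log v(G))$ of a bounded core is simply false for the graphs the algorithm must handle. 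Second, even where such a decomposition exists, the step ``each vertex gathers its ball and locally computes a \emph{canonical} decomposition, made consistent by identifier tie-breaking'' is exactly the point where a LOCAL algorithm can fail: different vertices see different balls, the decomposition of a ball is not the restriction of any decomposition of $G$, and a global tie-breaking rule applied to different local views does not yield agreeing outputs. You flag this as a ``subtlety'' to be handled by ``exporting the structural proof,'' but no mechanism is given, and this consistency problem is the entire difficulty of the model.

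The actual argument (in the cited work, and summarized in this paper as Algorithm~\ref{ColourAlg} and Theorem~\ref{thm:postlealgorithm}) avoids any global canonical object. Hyperbolicity/criticality bounds are used only to show that deep pockets contain deletable subgraphs, hence that every graph in the family contains a \emph{wallet}: a linear-size collection of disjoint, non-touching, bounded-size deletable pockets. The algorithm then peels: it finds deletable pockets locally, uses a distributed colouring of a bounded-degree auxiliary graph to select a non-touching subfamily, removes a constant fraction of the vertices, recurses $O(\log v(G))$ times, and extends the colouring back pocket by pocket using deletability (which guarantees any colouring of the rest extends, so no global coordination is needed). If you want to salvage your write-up, replace the one-shot ``ball-gathering plus canonical decomposition'' scheme with this peeling scheme; your reducibility lemmas then feed into the deletability-of-deep-pockets step rather than into a global structure theorem for the input graph.
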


Our main algorithmic result is as follows.

\begin{thm}
\label{algorithmicresults}
For each $t \in \{4,5\}$, there exists a deterministic LOCAL distributed algorithm for $t$-list-colouring a $K_{t}$-minor-free graph in $O(\log v(G))$ rounds. For $t$ large enough, there exists a deterministic LOCAL distributed algorithm for $13 \cdot \max\left\{h(t),\left\lceil \frac{31}{2}(t-1) \right\rceil \right\}$-list-colouring a $K_{t}$-minor-free graph on $n$ vertices in $O(\log v(G))$ rounds. 
\end{thm}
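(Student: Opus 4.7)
The plan is to adapt the hyperbolic-families algorithmic framework developed by the second author in Theorem~\ref{lukesalgorithmtheorem} to the $K_t$-minor-free setting. Since any $K_t$-minor-free graph is trivially $r$-locally-$K_t$-minor-free for every $r$, Theorems~\ref{smallvaluesoft} and~\ref{largevaluesoft} already guarantee that the required list colouring exists; the task is to convert these existence proofs into a LOCAL algorithm with an $O(\log v(G))$-round guarantee.

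The first step is to extract the algorithmic content of the deletable subgraph results used to prove Theorems~\ref{smallvaluesoft} and~\ref{largevaluesoft}. For $t \in \{4,5\}$ these are the $k=1$ instances of Conjecture~\ref{deletablesubgraphconjecture}; for large $t$ they are the weaker analogue used in Theorem~\ref{largevaluesoft}. In each case, the proof in fact shows that if $G$ is $(X,L)$-critical $K_t$-minor-free with $v(G) > c_t|X|$, then an induced $kt$-deletable subgraph disjoint from $X$ can be located inside a ball of radius $O(\log v(G))$ around any vertex sufficiently far from $X$. A constructive reading of these proofs, in the spirit of \cite{Postlealgorithms}, yields a local procedure that, in $O(\log v(G))$ rounds, either exhibits such a deletable subgraph in the ball around a given vertex or certifies that the surrounding region is already of size $O(|X|)$.

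The algorithm then proceeds in phases. In each phase, every vertex explores its $O(\log v(G))$-ball and, using vertex identifiers to break ties, the vertices agree on a maximal family of pairwise vertex-disjoint deletable subgraphs. These subgraphs are set aside to be coloured last, and we recurse on the residual graph. After $O(\log v(G))$ phases the uncoloured region has shrunk to within a constant factor of the boundary, at which point one additional ball exchange lets every remaining vertex brute-force its own colour. Finally, the set-aside deletable subgraphs are $L$-coloured in parallel in a single round, using the defining property of $kt$-deletability, which guarantees that any $L$-colouring of the complement extends.

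The main obstacle is controlling the phase count. One must show that each phase removes a constant fraction of the remaining vertices, so that $O(\log v(G))$ phases suffice. This follows from the linear bound $v(G) \le c_t|X|$ in the deletable subgraph theorem: if the maximal disjoint family chosen in a phase covered only a sub-constant fraction of the residual, the residual would still satisfy the inequality strictly, and the deletable subgraph theorem would produce another deletable subgraph missed by the family, contradicting maximality. The delicate part is the boundary accounting between phases, since colouring a deletable subgraph enlarges the precoloured boundary $X$ passed to the next recursive call; one must verify that this enlargement is proportional to the number of newly coloured vertices, so that the ratio $v(G)/|X|$ stays bounded throughout and the geometric progression is maintained.
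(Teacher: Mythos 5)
Your high-level scheme (peel off deletable subgraphs, recurse on the residual, extend at the end) is the same as the paper's Algorithm~\ref{ColourAlg}, but the way you propose to find the deletable subgraphs does not deliver the claimed round bound. You locate deletable subgraphs inside balls of radius $O(\log v(G))$ and run $\Theta(\log v(G))$ phases, each removing a constant fraction of the residual; since every phase already costs $\Omega(\log v(G))$ rounds of exploration, the total is $O(\log^2 v(G))$, not $O(\log v(G))$. The paper avoids exactly this by proving \emph{wallet} theorems (Lemma~\ref{K4ClutchExist}, Theorem~\ref{K5ClutchExist}, and the large-$t$ analogue): using the separator result Lemma~\ref{initialPockets}, a linear fraction of the vertices lies in non-touching $C$-pockets of \emph{constant} size and degree, and after discarding ``bad'' pockets via edge-density bounds (for $K_4$-, $K_5$-minor-free and bipartite $K_t$-minor-free graphs) every surviving pocket is either $c$-deletable or deep, and deep pockets contain deletable subgraphs by the deletability theorems (Lemma~\ref{lem:manageableExist}). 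Because the pieces have bounded size and degree, each recursion level needs only $O(1)$ rounds to find them, the symmetry breaking is a $C^{2C}$-colouring of the bounded-degree power graph $H^{2C}$ computed in parallel, and the $O(\log v(G))$ bound follows from Theorem~\ref{thm:postlealgorithm}. Relatedly, your claim that the vertices ``agree on a maximal family'' of disjoint deletable subgraphs by ID tie-breaking glosses over a real distributed obstacle: with pieces of size up to the ball radius, greedy-by-ID selection has long dependency chains and is not computable in $O(\log v(G))$ rounds in any obvious way; boundedness of the pockets is what makes this step cheap in the paper.

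There is also a correctness gap in your final step: colouring all set-aside deletable subgraphs ``in parallel in a single round'' is not sound. Pieces chosen in different phases can be adjacent, and a piece set aside in phase $i$ is deletable only with respect to the residual graph of phase $i$, so the extensions must be performed in reverse phase order; even within one phase, two vertex-disjoint but adjacent deletable subgraphs cannot be extended simultaneously, since the list-size guarantee in the definition of deletability accounts for outside neighbours whose colours must already be fixed, and simultaneous choices can conflict. The paper handles this by requiring the wallet pieces to be pairwise non-touching and by scheduling the extensions through the colouring $\psi$ of $H^{2C}$, extending class by class in the loop of Algorithm~\ref{ColourAlg}. Finally, your worry about ``boundary accounting'' is moot in the paper's scheme: nothing is precoloured before the recursive call (the recursion simply runs on $G-H$ with the same lists, using closure under subgraphs), and deletability alone guarantees the later extension, so no enlarged precoloured set is ever passed down.
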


These theorems build on the ideas of the second author from~\cite{Postlealgorithms}. In fact the algorithm is identical to the algorithm given in~\cite{Postlealgorithms}. The algorithm relies on the existence of what the second author called \emph{wallets} in his work on graphs embeddable on a surface. Therefore the main algorithmic work in this paper is showing that under the conditions of Theorem~\ref{algorithmicresults}, suitable wallets exist; this in turn relies on new deletability results for $K_t$-minor-free graphs.

For convenience, we also define the notions of boundary and coboundary which we will use throughout.
\begin{definition}
     Let $G$ be a graph, and $H$ an induced subgraph of $G$.  The \emph{boundary} of $H$ in $G$ is $\{v \in V(H): N(v) \setminus V(H) \neq \emptyset$\}. The \emph{coboundary} of $H$ in $G$ is the boundary of $G[V(G) - V(H)]$. For $C>0$, we say $H$ is a \textit{$C$-pocket} if $H$ is connected, $v(H) \leq C$, and $d_{G}(v) \leq C$ for every $v \in V(H)$. For $k >0$, we say $H$ is \textit{$k$-deep} if the coboundary of $H$ is non-empty and has size at most $\frac{v(H)}{k}$.
\end{definition}

The paper is structured as follows. In Section~\ref{smalltsection}, we prove the deletability results for $K_{t}$-minor-free graphs when $t \leq 5$. In Section~\ref{largevalueoftprecolouring}, we prove analogous deletability results for large values of $t$. In Section~\ref{exponentialgrowthandlocalhadwigers}, we prove Theorem~\ref{smallvaluesoft} and Theorem~\ref{largevaluesoft}. In Section~\ref{clutchExist}, we prove Theorem~\ref{algorithmicresults}. Finally, in Section~\ref{lowerboundssection}, we show that $(t-1)$-colours does not suffice for Local Hadwiger's Conjecture. 

\section{Deletable subgraphs in graphs with no $K_{t}$ minor for small values of $t$}
\label{smalltsection}
In this section we find deletable subgraphs in graphs with no $K_{t}$ minor for $t \leq 5$. 
We start off by investigating $K_{3}$-minor-free graphs, which are simply forests. The proof is easy, but the proof can be viewed as a template for the $K_{4}$-minor-free and $K_{5}$-minor-free results. 

\begin{thm}
\label{K3deletability}
Let $G$ be a forest and $X \subseteq V(G)$. If $G$ contains no $3$-deletable subgraph disjoint from $X$, then $v(G) \leq 2(|X|-1)$.

\end{thm}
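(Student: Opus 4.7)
The plan is to extract a strong degree restriction from the hypothesis, reduce to a clean case, and then close with a handshake-style count on each component of the forest. First I would unpack the $3$-deletability definition on singletons: for any vertex $v$, the induced subgraph $H = \{v\}$ is $3$-deletable exactly when the list-size requirement $|L(v)| \geq 3 - d_G(v)$ forces $L(v) \neq \emptyset$, i.e., when $d_G(v) \leq 2$. The hypothesis therefore forces every vertex outside $X$ to have $d_G(v) \geq 3$; in particular every component of $G$ must meet $X$, since otherwise a leaf or isolated vertex of that component would give a forbidden $3$-deletable singleton disjoint from $X$.

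Next I would reduce to the case where every vertex of $X$ has degree at least $1$ in $G$. If some $x \in X$ is isolated in $G$, deleting it changes no other degrees, so $G-x$ still has no $3$-deletable subgraph disjoint from $X \setminus \{x\}$; induction on $v(G)$ then gives $v(G-x) \leq 2(|X|-2)$ and hence $v(G) \leq 2(|X|-1) - 1$, which is within the bound. From here on I may assume $d_G(x) \geq 1$ for every $x \in X$.

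For each component $C$ of $G$, write $a_C = |X \cap C|$ and $b_C = v(C) - a_C$. Using that $C$ is a tree, the handshake identity combined with the two degree bounds ($d_G(v) \geq 3$ on $V(C) \setminus X$ and $d_G(v) \geq 1$ on $X \cap C$) yields
\[
3 b_C + a_C \;\leq\; \sum_{v \in V(C)} d_G(v) \;=\; 2(a_C + b_C - 1),
\]
so $b_C \leq a_C - 2$ and therefore $v(C) \leq 2(a_C - 1)$. Summing over all components (of which there is at least one since $G$ is nonempty) gives $v(G) \leq 2(|X|-1)$, as required.

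I do not anticipate any substantial obstacle here, since the statement is essentially a warm-up whose content lies in the two observations above. The only delicate point is the "no isolated $X$-vertex" reduction: without it the $X$-side of the degree sum could vanish and the counting would only deliver the weaker bound $v(G) \leq 3|X|-2$. The argument will, however, set the template for the $K_4$- and $K_5$-minor-free analogues proved later in the section, where identifying the appropriate deletable configurations becomes significantly more intricate.
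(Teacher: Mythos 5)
Your proof is correct and follows essentially the same route as the paper's: singletons of degree at most $2$ outside $X$ are $3$-deletable, so every non-$X$ vertex has degree at least $3$, and a handshake count over the forest then yields $v(G)\le 2(|X|-1)$. The only difference is cosmetic: the paper passes to a minimal counterexample and first shows every $X$-vertex is a leaf before counting, whereas you get by with $d_G(x)\ge 1$ on $X$ (after discarding isolated $X$-vertices by induction) and count per component, a mild streamlining of the same argument.
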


\begin{proof}
Suppose not, and let $G$ be a vertex minimal counterexample. First we claim that $G$ is a tree. If not, let $C_{1},\ldots,C_j$ be components of $G$, $j \geq 2$, and let $X_{i} = C_{i} \cap X$ for $i \in \{1,\ldots,j\}$. As $G$ contains no $3$-deletable subgraph disjoint from $X$, $C_{i}$ contains no $3$-deletable subgraph disjoint from $X_{i}$ for all $i \in \{1,\ldots,j\}$. Therefore by minimality, we have that $v(C_{i}) \leq 2(|X_{i}| -1)$ for all $i \in \{1,\ldots,j\}$. Thus $v(G) = \sum_{i=1}^{j} v(C_{i}) \leq \sum_{i=1}^{j} 2(|X_{i}|-1) \leq 2(|X|-1)$, as desired. 

Now we claim that all vertices in $X$ are leaves. Suppose not, and let $v \in X$ such that $d_{G}(v) \geq 2$. As $v$ is not a leaf, $G-v$ contains components $C_{1},\ldots,C_{j}$ for $j \geq 2$. Let $X_{i} = X \cap V(C_{i})$ for $i \in \{1,\ldots,j\}$. Note that $C_{i}$ cannot contain a $3$-deletable subgraph disjoint from $X_{i}$, as otherwise $G$ has a $3$-deletable subgraph disjoint from $X$. Thus $v(C_{i}) \leq 2(|X_{i}| -1)$. Therefore $v(G) = 1 + \sum_{i =1}^{j} v(C_{i}) \leq 1 + \sum_{i=1}^{j}2(|X_{i}|-1) \leq 2(|X|-1)$. 

Now we argue that $G$ contains no vertex of degree at most $2$ which is not in $X$. Suppose not, and let $v \in V(G)$ such that $d_{G}(v) \leq 2$ and $v \not \in X$. In this case, $v$ is a $3$-deletable subgraph of $G$ and is disjoint from $X$, a contradiction. 

Thus $G$ is a tree which contains no vertices of degree $2$, and all vertices of degree $1$ are in $X$. By the handshaking lemma it follows that:
\[\sum_{v \in V(G)} d_{G}(v) = 2v(G)-2.\]
So,
\[\sum_{v \in V(G), d_{G}(v) \geq 3} d_{G}(v) +|X| = 2v(G)-2.\]
Therefore, we have:
\[3(v(G)-|X|) + |X| \leq  2v(G)-2,\]
which implies
\[v(G) \leq 2|X|-2.\]
as desired. 
\end{proof}

We extend the ideas of the above proof to graphs with no $K_{4}$ minor. While $K_{4}$-minor-free graphs are not as simple as trees, they are known to be tree-like in the sense of treewidth. We recall the needed notions:

\begin{definition}
A \textit{tree decomposition} of a graph $G$ is a pair $(T,\beta)$ where $T$ is a tree with vertices $x_{1},\ldots,x_{n}$,  each vertex $x_{i}$ has an associated bag $B(x_{i}) \subseteq V(G)$ and $\beta$ is the set of bags satisfying the following properties:
\begin{enumerate}
\item{For every $v \in V(G)$, there exists a bag $B$ such that $v \in B$.}
\item{If the set of all bags containing $v \in V(G)$ is  $B_{1},\ldots,B_{j}$, then the subgraph of $T$ induced by the vertices associated to $B_{1},\ldots,B_{j}$ is connected.}
\item{For every edge $uv \in E(G)$, there exists a bag which contains both $u$ and $v$.}
\end{enumerate}
\end{definition}

\begin{definition}
The \textit{treewidth} of $G$ is minimum over all tree decompositions $(T,\beta)$ of $G$, $\max_{v\in V(T)}(|\beta(v)|-1)$.
\end{definition}

It is well known that $K_{4}$-minor-free graphs are exactly the graphs with treewidth at most $2$. 

\begin{thm}[\cite{diestel}]
A graph $G$ is $K_{4}$-minor-free if and only if $G$ has treewidth at most $2$. 
\end{thm}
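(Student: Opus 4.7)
The plan is to prove both directions separately, relying on the standard equivalence between $K_4$-minor-freeness and being a series-parallel graph.

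For the direction that treewidth at most $2$ implies $K_4$-minor-free, I would argue by contrapositive: if $G$ contains $K_4$ as a minor, then $G$ has treewidth at least $3$. First I would establish the well-known monotonicity lemma: if $H$ is a minor of $G$, then the treewidth of $H$ is at most the treewidth of $G$. This is routine: given a tree decomposition $(T,\beta)$ of $G$, one produces a tree decomposition of $G-e$ or $G-v$ by restricting bags, and of $G/e$ by taking the union of bags containing the two endpoints of $e$ (after identifying them). Combined with the easy direct computation that $K_4$ has treewidth exactly $3$ (any bag of a tree decomposition of $K_4$ containing fewer than $3$ vertices fails to cover enough edges, by the Helly-type property of subtrees of a tree), this completes the direction.

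For the harder direction, that $K_4$-minor-free graphs have treewidth at most $2$, I would proceed by induction on $v(G)$. The base cases $v(G) \leq 3$ are trivial since a single bag containing all of $V(G)$ is a valid decomposition. For the inductive step, I would handle disconnected and $1$-separated graphs first: if $G$ has a component decomposition or a cut vertex $v$ splitting $G$ into pieces $G_1, \dots, G_j$, then each $G_i$ is $K_4$-minor-free, so by induction each admits a tree decomposition of width at most $2$; I then glue these into a single decomposition by picking, in each $G_i$'s tree, a bag containing $v$ and joining all these bags to a new common bag $\{v\}$.

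The main obstacle is the $2$-connected case, where I would invoke the key structural fact: every $2$-connected $K_4$-minor-free graph on at least three vertices contains a vertex of degree $2$. To see this, note that a $2$-connected graph with minimum degree at least $3$ contains a $K_4$ subdivision (via an ear-decomposition argument or by a theorem of Dirac), hence a $K_4$ minor. Granted this fact, let $v$ be a vertex of degree $2$ with neighbours $u_1, u_2$. Let $G' = (G - v) + u_1 u_2$ (adding the edge if not already present). Then $G'$ is still $K_4$-minor-free, since any $K_4$ minor in $G'$ would lift (via uncontracting the added edge through $v$) to one in $G$. By induction $G'$ has a tree decomposition $(T', \beta')$ of width at most $2$. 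Because $u_1 u_2$ is an edge of $G'$, some bag $B$ of $T'$ contains $\{u_1, u_2\}$. I would then extend this decomposition by attaching a new bag $\{v, u_1, u_2\}$ adjacent to $B$ in the tree; the three tree-decomposition axioms are easily verified, and the width remains at most $2$. This completes the induction and hence the theorem.
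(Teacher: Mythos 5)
Your proof is correct. Note that the paper does not prove this statement at all---it is quoted as a known result with a citation to Diestel---and your argument is exactly the standard textbook one: minor-monotonicity of treewidth together with $\mathrm{tw}(K_4)=3$ for one direction, and for the other the reduction at a degree-$2$ vertex in a $2$-connected $K_4$-minor-free graph (guaranteed by Dirac's theorem on $K_4$-subdivisions), combined with gluing tree decompositions at cut vertices.
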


It will be useful to work with so called smooth tree decompositions.

\begin{definition}
A tree decomposition is \textit{$k$-smooth} if each bag has exactly $k+1$ vertices and two adjacent bags share exactly $k$ vertices. 
\end{definition}

\begin{thm}[\cite{smoothtreewidth}]
If a graph $G$ has a tree decomposition of width $k$, then $G$ has a $k$-smooth tree decomposition.
\end{thm}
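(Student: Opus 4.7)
The plan is to modify the given tree decomposition $(T,\beta)$ of width $k$ in two phases: first make every bag have size exactly $k+1$, then subdivide edges to make adjacent bags share exactly $k$ vertices. Note that if $v(G) \le k+1$ the theorem is essentially vacuous, so I may assume $v(G) \ge k+1$.

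For Phase 1, I would alternate two operations. First, whenever some edge $yz \in E(T)$ has $B(y) \subseteq B(z)$, contract it: delete $y$ from $T$ and attach its other neighbours of $T$ directly to $z$. This preserves vertex coverage, edge coverage, and subtree connectivity (any subtree $S_w$ containing $y$ also contains $z$ since $w \in B(y) \subseteq B(z)$), and it strictly reduces $v(T)$. Once no such contraction is possible, every edge $yz$ of $T$ satisfies $B(y) \setminus B(z) \neq \emptyset$ and $B(z) \setminus B(y) \neq \emptyset$. Now, while some bag $B(x)$ has $|B(x)| < k+1$, pick any neighbour $y$ of $x$ in $T$ (which exists, since the single-node case would force $v(G) = |B(x)| < k+1$, contradicting our assumption), pick any $v \in B(y) \setminus B(x)$, and add $v$ to $B(x)$. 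Subtree connectivity for $v$ is preserved because $v \in B(y)$ and $y$ is adjacent to $x$ in $T$. The maximum bag size never exceeds $k+1$, so after finitely many additions every bag has size exactly $k+1$.

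For Phase 2, I would subdivide each edge $yz \in E(T)$ for which $|B(y) \cap B(z)| = j < k$. Since both bags have size $k+1$, write $B(y) \setminus B(z) = \{u_1, \ldots, u_a\}$ and $B(z) \setminus B(y) = \{v_1, \ldots, v_a\}$ where $a = k+1-j$. Replace the edge $yz$ by a path $y = z_0, z_1, \ldots, z_a = z$ and set
\[ B(z_i) = (B(y) \setminus \{u_1, \ldots, u_i\}) \cup \{v_1, \ldots, v_i\}. \]
Each new bag has size $k+1$, and consecutive bags differ by exactly one element, hence intersect in exactly $k$ vertices. Subtree connectivity is maintained: vertices of $B(y) \cap B(z)$ lie in every intermediate bag, $u_i$ lies on the connected prefix $z_0, \ldots, z_{i-1}$, and $v_i$ on the connected suffix $z_i, \ldots, z_a$; vertex and edge coverage are trivially preserved since $B(y)$ and $B(z)$ themselves still appear. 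Applying this subdivision at every such edge yields a $k$-smooth tree decomposition.

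The main obstacle is Phase 1: one must be careful that neither operation ever increases the maximum bag size, and that the combination of contraction and padding always terminates with every bag of size exactly $k+1$ (the single-node boundary case is exactly where the hypothesis $v(G) \ge k+1$ is used). The Phase 2 subdivision is an essentially mechanical telescoping construction.
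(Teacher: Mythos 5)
The paper does not actually prove this statement -- it is quoted from the literature with a citation -- so your argument has to stand on its own; the route you take (pad small bags, contract redundant bags, then telescope along edges) is indeed the standard one, and your Phase 2 is correct as written. The problem is in Phase 1 and in its interface with Phase 2. After your one-time contraction pass, the invariant ``every edge has $B(y)\setminus B(z)\neq\emptyset$ in both directions'' is \emph{not} preserved by padding: the moment you pull the last vertex of $B(y)\setminus B(x)$ into $B(x)$, you create $B(y)\subseteq B(x)$, and further padding can make adjacent bags equal. This has two consequences. First, ``pick any neighbour $y$ of $x$ \ldots pick any $v\in B(y)\setminus B(x)$'' is unjustified: for the neighbour you picked that difference may now be empty. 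What is true, and needs an argument you do not give, is that some deficient bag has some neighbour with a private vertex; otherwise containment propagates through the tree (every neighbour of a stuck deficient bag is contained in it, hence deficient, hence its neighbours are contained in it, and so on), forcing $V(G)=B(x)$ and $v(G)<k+1$, contrary to your assumption. Second, and more seriously, at the end of your Phase 1 two adjacent bags can be equal; Phase 2 only subdivides edges with $|B(y)\cap B(z)|<k$, so such an edge survives with intersection of size $k+1$ and the output is not $k$-smooth. A concrete run: take $k=2$ and the path decomposition with bags $\{1,2\},\{2,3\},\{3,4\}$; pad the middle bag with $1$, then pad the first bag with $3$, and you have two adjacent bags both equal to $\{1,2,3\}$, which Phase 2 never touches.

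The fix is exactly what your opening sentence (``alternate two operations'') gestures at but your detailed argument drops by running the phases sequentially: whenever padding creates $B(y)\subseteq B(x)$, contract that edge immediately. Termination of the interleaved process follows from the lexicographic measure $\bigl(v(T),\ \sum_{x}(k+1-|B(x)|)\bigr)$: contraction decreases the first coordinate, padding fixes it and decreases the second. At termination no bag is deficient and no two adjacent bags are comparable, so every adjacent pair of (size-$(k+1)$) bags meets in at most $k$ vertices, and your Phase 2 telescoping then yields a $k$-smooth decomposition. With that repair the proof is correct; as written, it is not.
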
 

Finally, recall that a $k$-separation $(G_{1},G_{2})$ is a partition of $V(G)$ such that $V(G_{1}) \cap V(G_{2})$ has at most $k$ vertices, $V(G_{i}) \setminus V(G_{j}) \neq \emptyset$, for $i \neq j \in \{1,2\}$ and $V(G_{1}) \cup V(G_{2}) = V(G)$. 

Now we are ready to prove the existence of deletable subgraphs in $K_{4}$-minor-free graphs.

\begin{thm}
\label{K4minorfree}
Let $G$ be a $K_{4}$-minor-free graph and let $X \subseteq V(G)$. If $G$ contains no 4-deletable subgraph disjoint from $X$, then $v(G) \leq 11|X|$.
\end{thm}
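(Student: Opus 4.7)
The plan is to adapt the proof template of Theorem~\ref{K3deletability} to the richer structural setting of $K_4$-minor-free graphs, which are precisely the treewidth-$\le 2$ (equivalently, series-parallel) graphs. Working with a vertex-minimum counterexample $G$, the goal is to force enough structural rigidity to enable a counting argument bounding $v(G)$ linearly in $|X|$.

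First, by componentwise reduction exactly as in Theorem~\ref{K3deletability}, I may assume $G$ is connected. The analogue of ``no non-$X$ vertex of degree at most $2$'' in the tree case is that every $v \in V(G) \setminus X$ must satisfy $d_G(v) \ge 4$: otherwise $\{v\}$ is a $4$-deletable subgraph disjoint from $X$, since the defining constraint $|L(v)| \ge 4 - d_G(v) \ge 1$ can always be satisfied by any colour in $L(v)$. The analogue of the ``no cut vertex outside $X$'' step is: for every cut vertex $a$ of $G$ and every component $C$ of $G - a$ with $V(C) \cap X = \emptyset$, the subgraph $H = G[V(C)]$ is $4$-deletable, because its boundary vertices (those adjacent to $a$) have residual list size $\ge 3$, interior vertices have residual list size $\ge 4$, and $K_4$-minor-free graphs are $3$-choosable. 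Hence in a minimum counterexample every component of $G$ minus any cut vertex meets $X$.

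Next I would produce $4$-deletable subgraphs from $2$-cuts. For a $2$-cut $\{a,b\}$ and a component $C$ of $G - \{a,b\}$ with $V(C) \cap X = \emptyset$, the natural candidate is $H = G[V(C)]$: boundary vertices in $N(a) \cap N(b)$ have residual lists of size $\ge 2$, those adjacent to exactly one of $a,b$ have size $\ge 3$, and interior vertices have size $\ge 4$. The obstacle is that $N(a) \cap N(b) \cap V(C)$ can contain adjacent pairs (consistent with $K_4$-minor-freeness, as $K_4 - e$ shows), so naive $3$-choosability does not suffice. To push through I would either restrict to $2$-cuts where the common-neighbour set is well-behaved (independent, or bounded in size), or split $G$ along the $2$-cut and appeal to minimality on one side after a local reduction.

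With these structural constraints in hand --- minimum degree $\ge 4$ outside $X$, and every $1$- or $2$-cut separating region intersecting $X$ --- the final step is a counting argument. The minimum-degree condition combined with the $K_4$-minor-free edge bound $e(G) \le 2v(G) - 3$ yields $\sum_{v \in X} d_G(v) \le 4|X| - 6$; this alone is insufficient, but the structural restrictions bound the size of $X$-free ``pockets'' of a $2$-smooth tree decomposition of $G$ by a constant, and a discharging step assigning each non-$X$ vertex to a nearby $X$ vertex with charge at most $10$ yields $v(G) \le 11|X|$. The main obstacle is the $2$-cut analysis above: the residual lists of size $\ge 2$ at common neighbours of the cut vertices can obstruct a direct precolouring-extension, and resolving this requires a careful structural characterisation of $K_4$-minor-free common-neighbourhoods together with a recursive local reduction.
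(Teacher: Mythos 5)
Your opening moves match the paper's: a minimal counterexample, the observation that any vertex of degree at most $3$ outside $X$ is by itself $4$-deletable, and the intent to exploit a ($2$-smooth) tree decomposition. But there are two genuine gaps, one of which is fatal to the overall strategy. First, the $2$-cut step you propose (making an entire $X$-free component $C$ of $G-\{a,b\}$ $4$-deletable) is exactly where you stall, and you only gesture at repairs ("well-behaved common neighbourhoods" or "a local reduction") without an argument; the paper never needs this statement and does not prove it. Second, and more importantly, even if that step were repaired, the structural facts you collect --- minimum degree $\ge 4$ outside $X$, and every component across a $1$- or $2$-cut meets $X$ --- are \emph{not} sufficient for any counting or discharging conclusion. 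Consider the $2$-tree "triangular strip" $v_1,\dots,v_n$ with $v_i$ adjacent to $v_{i-1}$ and $v_{i-2}$, and $X=\{v_1,v_2,v_{n-1},v_n\}$: all non-$X$ vertices have degree $4$, and both sides of every $1$- or $2$-cut meet $X$, yet $v(G)$ is unbounded while $|X|=4$. So your final paragraph's claim that these restrictions "bound the size of $X$-free pockets by a constant" is false as stated, and the proposed discharging cannot go through from those hypotheses alone. (The strip is of course not a counterexample to the theorem, because its interior contains an induced $K_4-e$ on four consecutive vertices which is $4$-deletable --- but detecting that is precisely the idea your outline lacks.)

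The paper's mechanism for handling long $X$-free stretches is its Claim~\ref{pathX}: in a $2$-smooth tree decomposition, any six consecutive degree-$2$ nodes whose bags avoid $X$ force (after ruling out low-degree vertices) a run of bags $\{u,v,w\},\{v,w,x\},\{w,x,y\},\{x,y,z\},\dots$ whose middle four vertices $w,x,y,z$ induce a $K_4-e$ with at most two outside neighbours at the degree-$2$ vertices and at most one at the degree-$3$ vertices; such a $K_4-e$ is checked directly to be $4$-deletable. This bounded-size local configuration, rather than deletability of whole cut components, is the engine of the proof. Combined with Claim~\ref{leafX} (each leaf bag contributes a private $X$-vertex) and the purely combinatorial tree estimate of Claim~\ref{treesub} ($n\le 11(\ell+p)-3$ in terms of leaves $\ell$ and disjoint length-$6$ degree-$2$ paths $p$), one gets $|X|\ge \ell+p$ and hence $v(G)\le 11|X|$. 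To complete your write-up you would need to replace the $2$-cut deletability step with an argument of this kind (finding a constant-size deletable subgraph, e.g.\ $K_4-e$, inside every long unbranched portion of the decomposition) and then prove, rather than assert, the tree-counting step that converts leaves and long runs into distinct members of $X$.
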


\begin{proof}
Suppose not, and let $G$ be a vertex minimal counterexample.

\begin{claim}
\label{3deg}
All vertices of degree at most $3$ are in $X$.
\end{claim}
\begin{claimproof}
 Suppose not. If $v \in V(G)$ such that $d_{G}(v) \leq 3$, then $v$ is $4$-deletable since $K_1$ is $1$-colourable. If $v \not \in X$, this contradicts our assumption.
\end{claimproof}

Now fix a tree decomposition $(T,\beta)$ of $G$ which is $2$-smooth (which exists as $G$ is $K_{4}$-minor-free).  

\begin{claim}
\label{leafX}
If $B \in \beta$ is a leaf bag in the tree decomposition, then $B$ contains at least one vertex in $X$. Furthermore, this vertex is not contained in any other bag.
\end{claim}

\begin{claimproof}
Since $B$ is a leaf, it is adjacent to exactly one other bag $B'$. By 2-smoothness, there is exactly one vertex in $B\setminus B'$. Call it $v$. Since $v \not \in B'$, $v$ is not in any bag except $B$. Hence all the neighbours of $v$ must lie in $B$. Thus, $v$ has degree at most $2$. By Claim \ref{3deg}, this means that $v\in X$. 
\end{claimproof}

\begin{claim}
\label{pathX}
If $T$ has a path of 6 nodes of degree two, then the union of the corresponding bags contains at least one vertex in $X$. This vertex is not contained in any bag other than these six bags.
\end{claim}

\begin{claimproof}
	Suppose there is a path $t_1,t_2,t_3,t_4,t_5,t_6$ of adjacent nodes of degree 2 such that this does not hold. Let $B_i$, for $i\in \{1,\ldots,6\}$ be the corresponding bags. Let $B_0$ be the bag corresponding to the other neighbour of $t_1$ and $B_7$ the bag corresponding to the other neighbour of $t_6$. Let $B_0\cap B_1=\{u,v\}$ and let $w$ be the third vertex in $B_1$. If $w\notin B_1\cap B_2$, then $N(w)\subseteq B_1$. Hence $w$ has degree at most 2 and is in $X$ by Claim \ref{3deg}. Thus $w\in B_2$.
	
	Without loss of generality, suppose $B_1\cap B_2=\{v,w\}$. Let $x$ be the third vertex in $B_2$. By a similar argument, if $x\notin B_3$, then $x$ has degree at most $2$, and thus $x\in X$. Thus we have two cases. Either $B_2\cap B_3=\{v,x\}$ or $B_2\cap B_3=\{w,x\}$. In the first case, all the neighbours of $w$ must lie in $B_1\cup B_2$. Thus $w$ has degree at most 3 and is thus in $X$. Thus $B_2\cap B_3=\{w,x\}$.
	
	We now repeat the argument in the previous paragraph for $B_3$ through $B_6$, giving us the following for some distinct vertices $t,u,v,w,x,y,z,a,b,c\in V(G)$:
	\[\begin{array}{l l}
		B_0=\{t,u,v\} & B_1=\{u,v,w\}\\
		B_2=\{v,w,x\} & B_3=\{w,x,y\}\\
		B_4=\{x,y,z\} & B_5=\{y,z,a\}\\
		B_6=\{z,a,b\} & B_7=\{a,b,c\}
	\end{array}\]
	We now claim that $G'=G[w,x,y,z]$ is a 4-deletable subgraph of $G$. Note that $G'$ is isomorphic to $K_{4}-e$, otherwise we have a vertex of degree at most $3$, which then lies in $X$. Further,  observe that $w$ and $z$ are adjacent in $G$ to at most two vertices in $G-G'$. Note $x$ and $y$ are adjacent in $G$ to at most one vertex in $G-G'$. Therefore to prove that $G'$ is $4$-deletable, it suffices to see that $K_{4}-e$ is $L$-colourable, when the two vertices of degree $2$ have lists of size at least two, and the two vertices of degree $3$ have lists of size at least $3$.  It is easily checked that this graph is $L$-colourable, and thus $G'$ is $4$-deletable. Hence $G'$ contains a vertex of $X$, as desired. 
\end{claimproof}

\begin{claim}
	\label{treesub}
	Suppose $T''$ is a tree with $n$ vertices, $\ell$ leaves, and let $p$ denote the maximum number of disjoint paths of length $6$, and containing only vertices of degree $2$ in $T'$. Then $n\leq 11(\ell+p)-3$.
\end{claim}
	
\begin{claimproof}
	There exists a tree $T'$ with no vertices of degree 2 such that $T''$ is a subdivision of $T'$. The set of leaves is the same in $T'$ and $T''$. The same is true of the set of vertices of degree more than 2. Note that there are at most $\ell$ vertices of degree more than 2 in $T''$. Additionally, vertices of degree 2 in $T''$ only appear from subdividing edges of $T'$. For every edge $e$ in $T'$, let $s_e$ be the number of vertices of degree $2$ in $T''$ which lie on the unique path between the endpoints of $e$ in $T''$. Thus every edge $e$ in $T'$ contributes $\lfloor \frac{s_e}{6}\rfloor\geq \frac{s_e}{6}-1$ to $p$ and $s_e$ to $n$. Thus $n\leq 2\ell+\sum_{e\in E(T')}s_e\leq 2\ell +6(p+1)\leq 6(\ell+p)+2\leq 11(\ell+p)-3$ as required. The constant term comes from the fact that $\ell \geq 1$.
\end{claimproof}

\vskip.1in

Now we finish the proof.  Defining $p$ and $\ell$ for $T$ as in Claim \ref{treesub}, Claims \ref{leafX} and \ref{pathX} tell us that $|X|\geq p+\ell$. Furthermore, if we let $n$ be the number of nodes in $T$, $v(G)=n+2$. Thus, $v(G)=n+2\leq 11(\ell+p)\leq 11|X|$ as required.
\end{proof}

Unfortunately for $t \geq 5$, the class of $K_{t}$-minor-free graphs has unbounded treewidth, and so the above arguments no longer suffice. Nevertheless, $K_{5}$-minor-free graphs still look mostly treelike due to Wagner's Theorem, which will suffice for our purposes. We need some definitions before we can state Wagner's Theorem.

\begin{definition}
The Wagner graph, denoted $V_{8}$, has vertices $v_{1},v_{2},v_{3},\ldots,v_{8}$, edges $\{v_{i}v_{i+1}~|~i \in \{1,\ldots,8\}\}$ (indices taken modulo $8$) and $\{v_{i}v_{i+4}~|~i \in \{1,\ldots 4\}\}$.
\end{definition}

\begin{comment}

\begin{figure}
    \centering
\begin{tikzpicture}
\foreach \i in {0,1,2,3,4,5,6,7}{
 \node[blackvertex] at (90+\i*45:1.25cm) (u\i) {};};
\draw[thick,black] (u0)--(u1)--(u2)--(u3)--(u4)--(u5)--(u6)--(u7)--(u0);
\draw[thick,black] (u0)--(u4);
\draw[thick,black] (u1)--(u5);
\draw[thick,black] (u2)--(u6);
\draw[thick,black] (u3)--(u7);
\end{tikzpicture}
\caption{Wagner's Graph}
\label{Wagnersgraphpic}
\end{figure}
\end{comment}

\begin{definition}
Let $G_{1},G_{2}$ be graphs with cliques $K_{1}$ and $K_{2}$ respectively, such that $v(K_{1}) = v(K_{2}) =k$. A $k$-sum of $G_{1}$ and $G_{2}$ is the graph obtained by identifying the vertices in $K_{1}$ with the vertices in $K_{2}$, and then possibly deleting edges if desired. 
\end{definition}

With this we can state Wagner's Theorem. 

\begin{thm}[\cite{Wagnersthm}]
Every $K_5$-minor-free graph can be obtained from a collection of planar graphs and $V_8$ using clique sums over cliques of size at most 3.
\end{thm}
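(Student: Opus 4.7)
The plan is to prove this by induction on $v(G)$, with the overall strategy being: reduce to the $4$-connected case via small-cut decompositions, then classify the $4$-connected $K_5$-minor-free graphs. For the base case, graphs on at most $8$ vertices can be checked by direct inspection (they are either planar or easily seen to be $K_5$-minor-free via a finite check).

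For the inductive step, suppose $G$ is $K_5$-minor-free with $v(G) > 8$ and has a vertex cut $S$ with $|S|\le 3$. Let $C_1,\ldots,C_k$ be the components of $G - S$ and set $G_i^+ := G[V(C_i)\cup S]$ together with all edges inside $S$ added in (so $S$ becomes a clique in $G_i^+$). A standard exchange argument shows that each $G_i^+$ is still $K_5$-minor-free: any $K_5$ minor in $G_i^+$ could use the added clique edges on $S$, but these can be realized by internally disjoint paths through some other $C_j$ (using connectivity of $G$ between the vertices of $S$), producing a $K_5$ minor in $G$ itself, a contradiction. By induction each $G_i^+$ decomposes as a sequence of clique-sums of planar graphs and copies of $V_8$, and $G$ is recovered by taking clique-sums of these decompositions along $S$ and then deleting any edges on $S$ not originally present in $G$.

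Thus one may assume $G$ is $4$-connected. If $G$ is planar, we are done. Otherwise, since $G$ has no $K_5$ minor, by Kuratowski's theorem $G$ must contain a $K_{3,3}$-subdivision $H$ (a $K_5$-subdivision would immediately give a $K_5$ minor). I would then use $4$-connectivity to analyze how $V(G)\setminus V(H)$ and any extra edges attach to $H$: for any additional branch vertex or internally disjoint path attaching to $H$, $4$-connectivity provides three further internally disjoint paths to distinct branches of $H$, and a careful case analysis on which branches are hit shows that any such configuration either (a) produces a $K_5$ minor by contracting appropriate paths, or (b) forces exactly the symmetric $V_8$ structure.

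The main obstacle is this final classification step: showing that $V_8$ is the \emph{unique} $4$-connected non-planar $K_5$-minor-free graph. This is the delicate combinatorial heart of the theorem and is where most of the work lies; it requires a careful enumeration of how extra structure can attach to a fixed $K_{3,3}$-subdivision, systematically ruling out all possibilities except the one giving $V_8$. The reductions to the $4$-connected case via clique-sums are comparatively routine, but without the uniqueness of $V_8$ among $4$-connected obstructions, the decomposition statement cannot be concluded.
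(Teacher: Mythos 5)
This statement is Wagner's classical structure theorem, which the paper cites rather than proves, so your sketch has to stand on its own --- and it has a genuine error at its very first step. The reduction "decompose along every cut $S$ with $|S|\le 3$, turn $S$ into a clique in each torso $G_i^+$, and argue by an exchange that each $G_i^+$ is still $K_5$-minor-free" is false. The exchange needs internally disjoint paths through some other component realizing \emph{all} of the (up to three) added edges on $S$ simultaneously, and a small component cannot supply them. Concretely, take $G=V_8$ with vertices $v_1,\dots,v_8$ as defined in the paper and $S=\{v_2,v_5,v_8\}=N(v_1)$, so the components of $G-S$ are $\{v_1\}$ and $\{v_3,v_4,v_6,v_7\}$. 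The torso on the large side, $(V_8-v_1)$ plus a triangle on $S$, contains a $K_5$ minor with branch sets $\{v_2\},\{v_5\},\{v_8\},\{v_3,v_4\},\{v_6,v_7\}$ (all ten pairs are joined, using the three added edges and the chords $v_2v_6$, $v_3v_7$, $v_4v_8$), even though $V_8$ itself is $K_5$-minor-free: the single vertex $v_1$ can realize only one of the three missing edges. This is exactly why the classical proofs (see e.g.\ Diestel \cite{diestel}) work with \emph{edge-maximal} $K_5$-minor-free graphs, where one proves that a separator of order at most $3$ already induces a clique and the two sides are again edge-maximal, so no edges are ever added across a cut; $V_8$ is kept as an irreducible summand precisely because it cannot legitimately be cut through.

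A second, related problem is that your target classification is misstated: $V_8$ is cubic, hence only $3$-connected, so it is not "the unique $4$-connected non-planar $K_5$-minor-free graph." The correct statements are that every $4$-connected non-planar graph has a $K_5$ minor (so there are no $4$-connected non-planar $K_5$-minor-free graphs at all), and that $V_8$ arises as the unique non-planar piece at the $3$-connected, edge-maximal stage. Under your plan of splitting along all cuts of size at most $3$, $V_8$ would itself be split (illegitimately, by the counterexample above) and could never appear as a building block, so even the intended shape of the conclusion is off. Finally, the combinatorial heart --- the classification of the $3$-connected non-planar pieces --- is only asserted ("a careful case analysis"), so what you have is a plan rather than a proof, and the plan as written does not survive its first reduction.
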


We will need the following theorem of the second author:

\begin{thm}[\cite{Hyperbolicfamilies}]
\label{Lukestheorem}
	There is a positive integer $c$ such that if $G$ is planar and $X\subseteq V(G)$ such that $G$  has no 5-deletable subgraph disjoint from $X$, then $v(G)\leq c|X|$.
\end{thm}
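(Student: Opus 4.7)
The plan is to mirror the strategy of Theorems~\ref{K3deletability} and~\ref{K4minorfree}: first perform a degree reduction, then exploit a structural decomposition combined with a "long reducible configuration" argument. The decomposition tool in the planar setting is not treewidth but Euler's formula together with discharging, which is the standard setting of the theory of hyperbolic families developed in~\cite{Hyperbolicfamilies}.

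As a warm-up, any vertex $v\notin X$ with $d_G(v)\le 4$ forms a $5$-deletable singleton, because the list condition $|L(v)|\ge 5-d_G(v)\ge 1$ is automatically satisfiable. Hence I may assume $G$ has minimum degree at least $5$ outside $X$. Fix a plane embedding of $G$ and assign charge $d_G(v)-4$ to each vertex and $\ell(f)-4$ to each face $f$; by Euler's formula the total charge is a negative constant, up to a correction of $O(|X|)$ arising from vertices in $X$ and the faces incident to them.

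The crux is to design discharging rules and identify reducible configurations so that, in any large portion of $G$ disjoint from $X$, the final charge is nonnegative unless a $5$-deletable induced subgraph is present. The reducible configurations one would need to identify, extending Thomassen's proof of planar $5$-choosability, include bounded-radius near-triangulations whose boundary lists force them to be $5$-deletable (the planar analogue of the leaf bags in Claim~\ref{leafX}) together with long chains of degree-$5$ vertices squeezed between higher-degree vertices, which play the role of the six consecutive degree-$2$ bags of Claim~\ref{pathX}. Granted that these configurations are $5$-deletable, the discharging argument shows that every vertex lies within bounded combinatorial distance of $X$, and therefore $v(G)=O(|X|)$.

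The main obstacle is verifying that a long planar chain of degree-$5$ vertices together with its surrounding ring structure is genuinely $5$-deletable: this requires a Thomassen-style inductive colour-extension argument that must tolerate arbitrary list restrictions inherited from the external neighbours of the chain, rather than just lists of size exactly five on interior vertices and lists of size three on a precoloured pair. This is precisely the technical core of the strong hyperbolicity of planar $5$-list canvases established in~\cite{Hyperbolicfamilies}; the hyperbolic-families machinery is designed exactly to package this step, which is why it is natural to quote the theorem rather than reprove it here.
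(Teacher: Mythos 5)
The paper does not prove Theorem~\ref{Lukestheorem} at all: it is imported verbatim as a black box from~\cite{Hyperbolicfamilies} and then used as an ingredient in the proof of Theorem~\ref{K5minorfree}. So your final recommendation --- quote the result rather than reprove it --- is exactly what the paper does, and to that extent your write-up is consistent with the paper's treatment.

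Judged as a proof attempt, however, the discharging sketch in the middle has a genuine gap. The step ``the discharging argument shows that every vertex lies within bounded combinatorial distance of $X$, and therefore $v(G)=O(|X|)$'' does not follow: planar graphs have unbounded maximum degree, so even the ball of radius one around $X$ can contain arbitrarily many vertices of degree at least five, and neither bounded distance nor an Euler-formula charge count by itself converts ``no reducible configuration far from $X$'' into the linear bound $v(G)\le c|X|$. That linear bound is precisely a linear isoperimetric inequality for list-critical planar canvases, and in~\cite{Hyperbolicfamilies} it is derived from (strong) hyperbolicity of the family of critical canvases, which rests on delicate Thomassen-style extension theorems for $5$-list-colourings with a precoloured path or cycle, not on a charge count alone. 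Moreover, the reducibility of your key configurations (long chains of degree-$5$ vertices with their surrounding ring, tolerating arbitrary list depletions from outside neighbours) is asserted rather than verified, and as your own last paragraph concedes, verifying it is the technical core of the cited theorem --- so as a standalone argument the sketch is circular. In short: either cite the theorem, as the paper does, or be prepared to reconstruct the hyperbolicity machinery; the intermediate discharging plan as written would not close the argument.
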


We will also need the following list-colouring result of \v{S}krekovski.

\begin{thm}[\cite{Skr}]
\label{K5choosability}
Let $G$ be a $K_{5}$-minor-free graph and $H$ a subgraph of $G$ isomorphic to either $K_{1},K_{2},$ or $K_{3}$. For any list assignment $L$ such that for all $v \in V(G) \setminus V(H)$ we have $|L(v)| \geq 5$ and otherwise $|L(v)| \geq 1$. If for all $x,y \in V(H)$, we have $L(x) \neq L(y)$, then there exists an $L$-colouring of $G$. 
\end{thm}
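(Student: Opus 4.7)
The plan is to prove this by induction on $v(G)$, using Wagner's structure theorem to reduce to the planar case and then appealing to Thomassen's classical $5$-list-colouring theorem. The base case $v(G) \leq 3$ is trivial, since then $G = H$ and the precolouring itself is an $L$-colouring.

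For the inductive step, I would first handle clique-separations. Suppose $G$ admits a separation $(G_1, G_2)$ with $V(G_1) \cap V(G_2) = V(K)$ for some clique $K$ of size at most $3$ and both $V(G_i) \setminus V(K)$ nonempty. Since $V(H)$ is a clique of size at most $3$ and $V(G_1) \cap V(G_2) = V(K)$, I can assume $V(H) \subseteq V(G_1)$. Applying the inductive hypothesis to $G_1$ (which is also $K_5$-minor-free) with $H$ precoloured yields an $L$-colouring of $G_1$. Since $K$ is a clique, its vertices receive pairwise distinct colours under this extension. Using these colours as a new precolouring of $K$ in $G_2$, another application of the inductive hypothesis to $G_2$ completes the colouring of $G$. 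If $G$ has no such clique-separation of order at most $3$, then by Wagner's theorem $G$ is either planar or isomorphic to $V_8$. When $G \cong V_8$, the graph is triangle-free so $H \in \{K_1, K_2\}$; since $V_8$ has maximum degree $3$, ordering the remaining vertices by decreasing distance from $V(H)$ and greedily extending works, as each uncoloured vertex has at most three already-coloured neighbours but a list of size $5$. When $G$ is planar, absence of a $3$-clique-cut forces $H$ (if $H = K_3$) to bound a face; choosing that face to be the outer face and applying Thomassen's $5$-list-colouring theorem completes the argument.

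The main technical obstacle is the planar case with $H = K_3$. Thomassen's original statement allows precolouring only a path of length at most $2$ on the outer face and insists that the remaining outer-face vertices have lists of size at least $3$; accommodating a third precoloured vertex $c$ requires either adapting his inductive chord/triangle reduction, or else reducing to the edge case by deleting $c$ and replacing each neighbour $v$ of $c$ with the modified list $L(v) \setminus \{L(c)\}$, then carefully tracking the resulting list-size requirements on the new outer face. This bookkeeping, rather than any new idea, is the substantive content one needs to extract from \v{S}krekovski's argument, and it is the step where I would expect to spend the most effort.
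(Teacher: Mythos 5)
You should first note that the paper offers no proof of this statement at all: it is quoted directly from \v{S}krekovski \cite{Skr}, so your proposal has to stand on its own. Its overall shape --- induction splitting along small clique-separators, a greedy argument for $V_8$, and a Thomassen-type extension for planar pieces obtained by deleting one precoloured vertex of the outer triangle and removing its colour from its neighbours' lists --- is the standard route, and the $V_8$ case and the ``delete $c$'' reduction to Thomassen's theorem are fine (after deletion the new outer-face vertices have lists of size at least $4$, or at least $5$ if not adjacent to $c$, and the precoloured edge $xy$ lies on the new outer face).

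The genuine gap is the step ``if $G$ has no clique-separation of order at most $3$, then by Wagner's theorem $G$ is planar or $V_8$.'' Wagner's theorem, in the form stated in this paper, builds $K_5$-minor-free graphs by clique-sums in which edges of the identified clique may afterwards be \emph{deleted}; hence the small separators of a $K_5$-minor-free graph need not be cliques, and the implication you invoke is false. Concretely, glue three octahedra along a common facial triangle $xyz$ and then delete the edges $xy,yz,zx$: the result is $K_5$-minor-free (it is a subgraph of repeated $3$-sums of planar graphs), is not planar (contracting the three components of $G-\{x,y,z\}$ yields a $K_{3,3}$ minor), is not $V_8$, and one checks its only separators of order at most $3$ contain two of the pairwise non-adjacent vertices $x,y,z$, so it admits no clique-separation of order at most $3$. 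On such a graph your induction never fires. Nor can you simply precolour a non-clique $3$-separator and recurse, since your inductive hypothesis only permits a precoloured clique, and you cannot in general add the missing separator edges while staying $K_5$-minor-free. The standard repair is to first pass to an edge-maximal $K_5$-minor-free supergraph $G''$ of $G$ (this keeps $H$ a clique and any $L$-colouring of $G''$ restricts to one of $G$); for edge-maximal graphs the stronger form of Wagner's theorem applies, in which every separation of order at most $3$ may be taken along a triangle and the undecomposable pieces are planar triangulations or $V_8$, after which your argument goes through. A last caveat: as literally stated (lists of size at least $1$ on $H$ that are merely distinct as sets) the theorem is false --- a triangle with lists $\{1\},\{2\},\{1,2\}$ has no $L$-colouring --- so your base case already uses the intended reading, namely that $H$ carries distinct singleton lists, i.e.\ a proper precolouring, which is what \cite{Skr} actually asserts.
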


Note Wagner's Theorem implies that if $G$ is $K_{5}$-minor free, then there exists a spanning supergraph $G^{*}$ which admits a tree decomposition $(T,\beta)$ with the following properties. Each bag either induces a planar graph or $V_{8}$,  if $b_{1}b_{2} \in E(T)$, and $B_{1},B_{2}$ are the associated bags, then $|V(B_{1}) \cap V(B_{2})| \leq 3$, and the shared vertices induce a clique in $G^{*}$ in $B_{1}$ and respectively $B_{2}$. Call such a decomposition a \textit{Wagner tree decomposition}. We will want to massage this tree decomposition into something more controllable.

\begin{lemma}
\label{Nicedecomposition}
Every $K_{5}$-minor-free graph admits a Wagner tree decomposition $(T,\beta)$ such that the following properties hold
\begin{itemize}
\item{If $b_{1}b_{2} \in E(T)$, and the associated bags $B_{1},B_{2}$ both induce planar graphs then $|V(B_{1}) \cap V(B_{2})|=3$}
\item{If $b_{1}$ has degree $2$, with neighbours $b_{2},b_{3}$, and the bag associated to $b_{1}$ is $B_{1}$ and is planar, then the bags associated to $b_{2}$ and $b_{3}$ induce graphs isomorphic to $V_{8}$.}
\end{itemize}
\end{lemma}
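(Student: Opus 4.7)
The plan is to start with any Wagner tree decomposition $(T_0,\beta_0)$ of $G$ (guaranteed by Wagner's Theorem) and iteratively merge adjacent bags until both required properties hold. We use two reduction rules. Rule (R1): if $b_1b_2\in E(T)$ with both $B_1$ and $B_2$ inducing planar graphs and $|B_1\cap B_2|<3$, contract $b_1b_2$ in $T$ and replace the two bags by $B_1\cup B_2$. Rule (R2): if $b_1\in V(T)$ has degree $2$, $B_1$ is planar, and some neighbour $b_2$ has $B_2$ planar, contract $b_1b_2$ in the same way. Each application strictly reduces the number of bags, so the procedure terminates; when it does, the two bulleted conditions are exactly the negations of the two rules' preconditions.

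The main thing to verify is that merging two adjacent planar bags preserves the Wagner-decomposition structure, i.e.\ that $G^*[B_1\cup B_2]$ is planar. By the tree-decomposition connectivity axiom, no edge of $G^*$ can join $B_1\setminus B_2$ to $B_2\setminus B_1$, so $G^*[B_1\cup B_2]$ is precisely the clique sum of $G^*[B_1]$ and $G^*[B_2]$ along the clique $B_1\cap B_2$, which has size at most $3$. The classical fact that a clique sum of two planar graphs along a clique of size at most $3$ is planar then applies: every planar graph admits an embedding with a prescribed clique of size at most $3$ on the outer face (trivial for sizes $1$ and $2$; for a triangle, its three edges bound a disk on the sphere, so one may re-embed to make that disk the unbounded region), and the two embeddings can be glued along the shared outer-face clique. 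This planarity-preservation step is the only non-bookkeeping obstacle, but it is standard.

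The remaining verifications are routine. One checks that edge contraction preserves the tree-decomposition axioms; that for any third bag $B_3$ still adjacent to the merged node, connectivity forces $B_3\cap(B_1\cup B_2)=B_3\cap B_1$, so intersection sizes and clique structure with untouched neighbours are unchanged; and that consequently applications of (R2) never destroy property~1 already secured by (R1). Since bags inducing $V_8$ are never touched by either rule, the final decomposition still consists of bags each inducing either a planar graph or $V_8$, as required.
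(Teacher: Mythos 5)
Your Rule (R1) (merging adjacent planar bags whose intersection has size at most $2$) is exactly the paper's first step and is fine, since $0$-, $1$- and $2$-sums of planar graphs are planar. The gap is in Rule (R2), which rests on the claim that a clique sum of two planar graphs along a clique of size at most $3$ is always planar. That ``classical fact'' is false for triangles: a triangle in a planar graph need not bound a face in any embedding. If $T=\{a,b,c\}$ is a \emph{separating} triangle of $B_1$, its edges bound a disk on the sphere, but that disk already contains part of $B_1$, so re-embedding to make it the unbounded region does not free it for gluing $B_2$. Concretely, let each of $B_1,B_2$ be $K_4$ with an extra vertex placed in one face and joined to that face's three vertices (a planar graph in which that triangle is separating); the $3$-sum along this triangle is $K_{3,4}$ plus the triangle's edges, which contains $K_{3,3}$ and is non-planar. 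So applying (R2) when $|B_1\cap B_2|=3$ can produce a bag that induces a non-planar graph, destroying the Wagner-decomposition structure, and your termination-plus-negation argument for the second bullet collapses precisely in this case.

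This is exactly the case the paper treats differently: when the clique sum of the two planar bags is not planar, the shared triangle must be separating in $B_1$, and instead of merging, the paper \emph{splits} $B_1$ along that separating triangle into the interior part $G_1$ and exterior part $G_2$ (both planar, both containing the triangle), attaching $G_2$ as a pendant node; the node carrying $G_1$ then has degree $3$, so the degree-$2$ obstruction disappears without ever forming a non-planar bag. To repair your argument you would need to add this splitting operation (or some equivalent) for the separating-triangle case, together with a new termination measure, since splitting increases the number of bags and the plain ``number of bags decreases'' argument no longer applies.
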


\begin{proof}
Suppose not. Let $G$ be $K_{5}$-minor free. Let $(T',\beta')$ be a Wagner tree decomposition of $G$.  First, suppose that $b_{1}b_{2} \in E(T')$ and $B_{1}$, $B_{2}$ are the bags associated with $b_{1}$ and $b_{2}$, where both $B_{1}$ and $B_{2}$ induce planar graphs. If $|V(B_{1}) \cap V(B_{2})| \leq 2$, then create a new tree decomposition $T$ where $V(T) = (V(T') \setminus \{b_{1}, b_{2}\}) \cup b_{1,2}$ where $b_{1,2}$ is a new node, $N(b_{1,2}) = (N(b_{1}) \cup N(b_{2}))\setminus\{b_{1},b_{2}\}$, and the vertices of the bag for $b_{1,2}$, say $B_{1,2}$ is the union of $B_{1}$ and $B_{2}$. Note $B_{1,2}$ induces a planar graph. Let $\beta= \beta' \setminus\{B_{1},B_{2}\}\cup \{B_{1,2}\}$. Then $(T,\beta)$ is a Wagner tree decomposition, as desired. Repeating this procedure implies the first item.

For the second property, suppose $b_{1}$ has degree $2$, the bag associated to $b_{1}$, $B_{1}$ induces a planar graph, and neighbours $b_{2}$ and $b_{3}$, where the bag associated to $b_{2},$ $B_{2}$ induces a planar graph. Then $|V(B_{1}) \cap V(B_{2})| =3$. Let $K = V(B_{1}) \cap V(B_{2})$. If the clique sum of $B_{1}$ and $B_{2}$ is planar, then we do as above and get a new Wagner tree decomposition. 

If not, then without loss of generality $K$ induces a separating triangle $K'$ in $B_{1}$. Let $G_{1}$ and $G_{2}$ be the interior and exterior $K'$ with respect to a planar embedding of $G[B_{1}]$, including $K'$ in both cases. Without loss of generality, we may assume that $V(B_{1}) \cap V(B_{3}) \subseteq V(G_{1})$ (as otherwise we simply relabel the graphs). Then let $(T,\beta)$ be the tree decomposition where $V(T) = V(T') \setminus \{b_{1}\} \cup \{g_{1},g_{2}\}$ where $g_{1},g_{2}$ are new vertices where $g_{1}$ is adjacent to all of the vertices $b_{1}$ is, and $g_{2}$ is adjacent to $g_{1}$, the bag associated to $g_{1}$ is $G_{1}$ and the bag associated to $g_{2}$ is $G_{2}$. Then $T$ is a Wagner tree decomposition, and now $g_{1}$ has degree $3$. Repeating this procedure gives the second item. 
\end{proof}

\begin{thm}\label{K5minorfree}
There is a constant $c'$ such that if $G$ is $K_{5}$-minor-free and $X \subseteq V(G)$, such that any $5$-deletable subgraph of $G$ contains a vertex in $X$, then $v(G)\leq c'\abs{X}$.
\end{thm}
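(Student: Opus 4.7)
The plan is to follow the template of the proof of Theorem~\ref{K4minorfree} but using the nice Wagner tree decomposition from Lemma~\ref{Nicedecomposition}, invoking Theorem~\ref{Lukestheorem} on planar bags and Theorem~\ref{K5choosability} to handle clique-cut boundaries. Suppose for contradiction $G$ is a minimal counterexample and fix a nice Wagner tree decomposition $(T,\beta)$, reduced so that no bag is contained in a neighboring bag. For each planar bag $B$, let $S_B$ denote the set of vertices of $B$ shared with neighboring bags and set $Y_B = (X\cap B)\cup S_B$. Any 5-deletable subgraph of $G[B]$ disjoint from $Y_B$ has all its vertices private to $B$, so its vertex-degrees agree in $G$ and in $G[B]$; it is therefore 5-deletable in $G$ and disjoint from $X$, contradicting the hypothesis. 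Hence Theorem~\ref{Lukestheorem} applied to the planar graph $G[B]$ gives $|B| \leq c(|X\cap B| + |S_B|)$, while for Wagner-graph bags $|B|=8$.

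Next I would show that each leaf bag contains a private vertex of $X$. For a Wagner-graph leaf $B$ with clique $C$ (of size at most $3$) shared with its unique neighbor, at least $8-3=5$ private vertices of $B$ have $G$-degree exactly three, each forming a 5-deletable singleton, so each lies in $X$. For a planar leaf, I would prove the stronger statement that $H = B\setminus C$ is itself 5-deletable in $G$ via the following \v{S}krekovski-style trick. Given any list $L$ on $H$ with $|L(v)| \geq 5 - |N_{G[B]}(v)\cap C|$ for $v\in H$, choose distinct ``fresh'' colors $c_w$ for $w\in C$ not appearing in any $L(v)$, and define $L'(v) = L(v) \cup \{c_w : vw\in E(G[B]),\, w\in C\}$ for $v\in H$ and $L'(w) = \{c_w\}$ for $w\in C$. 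Then $|L'(v)| \geq 5$ for $v\in H$ and the $L'$-lists on $C$ are distinct singletons, so Theorem~\ref{K5choosability} applied to the planar graph $G[B]$ with clique $C$ yields an $L'$-coloring whose restriction to $H$ is an $L$-coloring. Since private vertex sets of distinct leaves are disjoint, we obtain $n_L \leq |X|$.

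An analogous private-vertex argument gives at least $8-6=2$ private $X$-vertices in every degree-$2$ Wagner-graph bag, bounding their count by $|X|/2$. For a degree-$2$ planar bag, property 2 of Lemma~\ref{Nicedecomposition} forces both neighbors to be Wagner graphs, so twice the number of degree-$2$ planar bags is at most the total $T$-degree of the Wagner-graph bags. Using the standard tree identity $\sum_{d\geq 3}(d-2)n_d = n_L - 2$ together with the bounds on Wagner-graph bags of each $T$-degree established above, this total is $O(|X|)$, hence the number of degree-$2$ planar bags is also $O(|X|)$. Combined with $n_{\geq 3} \leq n_L - 2 \leq |X|$, this gives $n$ and $n_{V_8}$ both $O(|X|)$.

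Finally, summing bag sizes yields
\[
v(G) \leq \sum_B |B| \leq c\sum_B(|X\cap B| + |S_B|) + 8n_{V_8} \leq c(|X| + 3(n-1)) + 6c(n-1) + 8n_{V_8} = O(|X|),
\]
using $\sum_B |X\cap B| = \sum_{v\in X} k_v \leq |X| + 3(n-1)$ and $\sum_B |S_B| \leq 6(n-1)$, where $k_v$ counts the bags containing $v$. The hardest step will be the bookkeeping of the third paragraph, in particular handling degree-$2$ planar bags with $X\cap B = \emptyset$, for which property 2 of Lemma~\ref{Nicedecomposition} is essential.
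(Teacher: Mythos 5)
Your proposal is correct and follows essentially the same route as the paper's proof: a nice Wagner tree decomposition (Lemma~\ref{Nicedecomposition}), Theorem~\ref{Lukestheorem} applied to planar bags with the shared vertices added to $X$, the \v{S}krekovski-style fresh-colour trick via Theorem~\ref{K5choosability} for planar leaf bags, $3$-regularity of $V_8$ to force private vertices into $X$, and property 2 of the lemma to charge degree-$2$ planar bags to their $V_8$ neighbours. The only difference is cosmetic bookkeeping: you bound the total $T$-degree of the $V_8$ bags via $\sum_{d\ge 3}(d-2)n_d = n_L-2$, whereas the paper contracts the degree-$2$ paths and counts private $X$-vertices along them before applying the handshake lemma.
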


\begin{proof}
Let $c' = 61c$ where $c$ is the constant from Theorem \ref{Lukestheorem}. Suppose the theorem is false, and let $G$ be a vertex-minimal counterexample. Let $(T,\beta)$ be a Wagner tree decomposition with the properties from Lemma \ref{Nicedecomposition}. Let $b_{1},\ldots,b_{v(T)}$ be the vertices of $T$, associated with bags $B_{1},\ldots,B_{v(T)}$ respectively. Let $\delta(B_{i})$ denote the boundary of $B_{i}$, that is the set of vertices of $B_{i}$ which lie in a bag $B_{j}$ such that $b_{i}b_{j} \in E(T)$. Let $X_{i}=X\cap (B_{i}\setminus\delta(B_i))$.

\begin{claim}
\label{deg4claim}
Every vertex $v$ with $d_{G}(v) \leq 4$ is in $X$.
\end{claim}
\begin{claimproof}
Such a vertex is $5$-deletable and thus must lie in $X$.
\end{claimproof}
\begin{claim}
If $b_{1}$ is a leaf of $T$, then $X_{1} \neq \emptyset$. 
\end{claim}
\begin{claimproof}
Suppose not. If $B_{1}$ induces a graph isomorphic to $V_{8}$, then the result follows since $\delta(B_{1})$ contains at most three vertices, the fact that $V_{8}$ is $3$-regular, and Claim \ref{deg4claim}. Thus we can assume that $B_{1}$ induces a planar graph. Let $B_1'$ be obtained by from the graph induced by $B_{1}$ by making $\delta(B_1)$ a clique. Note $B_{1}'$ is planar by definition of Wagner tree decompositions. We claim that $ B'' := B_{1}' - \delta(B_{1}')$ is $5$-deletable. To see this, let $L$ be any list assignment such that each vertex $v$ of $B''$ receives a list with at least $5 - (d_{G}(v) - d_{G[B'']}(v))$ colours. Note that each vertex has a list of size at least $2$, since the boundary has at most three vertices. Theorem \ref{K5choosability} implies that $B''$ is $L$-choosable, since we can extend $L$ to a $5$-list assignment, and then precolour the vertices of $\delta(B_{1}')$ such that for the resulting list assignment $L'$, $B''$ is $L$-colourable if and only if $B_{1}'$ is $L'$-colourable. 
Thus $B''$ is $5$-deletable, and thus $X_{1}$ is non-empty, as desired. 
\end{claimproof}

\vskip.1in
We now split into cases. First suppose that $T$ has no vertices of degree $2$.
As $T$ has no vertices of degree $2$, if $L$ is the number of leaves of $T$ we have that $v(T) \leq 2L$ (by the same argument given in the $K_{3}$-minor-free case). Thus $v(T) \leq 2|X|$ as each leaf bag contains at least one unique vertex of $X$. As $G$ contains no $5$-deletable subgraphs disjoint from $X$, it follows that $G[B_{i}]$ contains no $5$-deletable subgraph disjoint from $X \cup \delta(B_{i})$. To see why, suppose $G[B_{i}]$ contained a $5$-deletable subgraph $H$ disjoint from $X \cup \delta(B_{i})$. As the degrees of vertices not in the boundary of $\delta(B_{i})$ are the same in $G[B_{i}]$ as in $G$, it follows that $H$ is $5$-deletable in $G$, and disjoint from $X$, a contradiction. If $B_i$ induces a planar graph, then $|B_i|\leq c(\abs{X_i\cup \delta(B_i)})$ by Theorem \ref{Lukestheorem}. If $B_i=V_8$, then every vertex in $B_i$ must be in $\delta(B_i)$ or in $X_i$ by Claim \ref{deg4claim} since $V_8$ is $3$-regular. Thus, in this case, we also find that $|B_i|=\abs{X_i\cup \delta(B_i)} \leq c(\abs{X_i\cup \delta(B_i)})$. Now we compute:

\begin{align*}
v(G) &\leq \sum_{i=1}^{v(T)} |B_{i}| \\
& \leq  \sum_{i=1}^{v(T)} c(|X_{i}| + |\delta(B_{i})|)  \\
&\leq c|X| + c \cdot \sum_{i=1}^{v(T)}|\delta(B_{i})| \\
& \leq c|X| + c(3(2v(T)-2))\\
& \leq c|X| + c(12|X| - 6) \\
& \leq c'|X|,
\end{align*}
as desired, where for the last inequality we used that $c' \ge 13c$. 

Therefore we can assume that $T$ has vertices of degree $2$. Observe by our choice of tree decomposition that if $b_{1},b_{2},b_{3},\ldots,b_{r}$ is a path in $T$ where all vertices have degree $2$, then if $G[B_{i}]$ is planar, both $B_{i+1}$ and $B_{i-1}$ are isomorphic to $V_{8}$. Further, if $G[B_{i}]$ is isomorphic to $V_{8}$ and $b_{i}$ has degree $2$, then as $V_{8}$ has no triangles, $|\delta(B_{i})| \leq 4$, and hence $|X_{i}| \geq 4$. Let $T'$ denote the tree obtained from $T$ by contracting all paths consisting only of vertices of degree 2. For every $e\in T'$, let $s_e$ be the number of vertices on the unique path in $T$ between the endpoints of $e$.  Let $D_{2}$ be the number of degree $2$ vertices in $T$, and $D_{1}$ be the number of leaves. each edge in $T'$ contributes $s_e$ to $D_2$ and at least $4\lfloor\frac{s_e}{2}\rfloor\geq 2s_e-2$ to $X$. There are at most $2D_1-1$ edges in $T'$ so the contribution of the degree 2 vertices to the size of $X$ is at least $2D_2-4D_1+2$. So $\abs{X}\geq 2D_2-4D_1+D_1=2D_2-3D_1$ and $\abs{X}\geq D_1$. So $10\abs{X}\geq 4D_2+2D_1$. By the handshaking lemma we have
\begin{align*}
\sum_{v \in V(T)} d_{T}(v) = 2v(T) -2.
\end{align*}

Rewriting this we have: 

\begin{align*}
\sum_{v \in V(T), d_{T}(v) \geq 3} d_{T}(v) + 2D_{2} + D_{1} = 2v(T)-2,
\end{align*}

and
\begin{align*}
\sum_{v \in V(T), d_{T}(v) \geq 3} d_{T}(v) + 2D_{2} + D_{1} \geq 3(v(T)-2D_{2}-D_{1}) +2D_{2} + D_{1}.
\end{align*}

Rearranging the equations we get
\[v(T) \leq 4D_{2} + 2D_{1} -2 \leq 10|X|\]

Now we do a similar calculation as above:

\begin{align*}
v(G) &\leq  \sum_{i=1}^{v(T)} |B_{i}| \\
& \leq  \sum_{i=1}^{v(T)} c(|X_{i}| + |\delta(B_{i})|)\\
& \leq  c|X| + c\cdot \sum_{i=1}^{v(T)}|\delta(B_{i})| \\
& \leq c|X| + c(3(2v(T)-2))\\
& \leq c|X| + c(60|X| - 6) \\
& \leq c'|X|,
\end{align*}
as desired, where for the last inequality we used that $c' \ge 61c$. 
\end{proof}

\section{Deletable subgraphs in $K_{t}$-minor-free graphs for large $t$.}
\label{largevalueoftprecolouring}

In this section we prove similar theorems to last section, for large values of $t$ (at the cost of worse bounds). 

We need the following key lemma of the second author and Norin~\cite{norinpostlelist} (recall that for a set of edges $E$, $G / E$ denotes the graph obtained from $G$ by simultaneously contracting all edges in $E$):

\begin{lemma}[\cite{norinpostlelist}]
\label{matchinglemma}
If $G$ is a graph with minimum degree $d \geq 6k$, then there exists a non-empty $X \subseteq V(G)$ and a matching $M$ from the coboundary $Y$ of $X$ to $X$ that saturates $Y$ such that $|Y| \leq 3k$ and $G[X \cup Y] / M$ is $k$-connected. 
\end{lemma}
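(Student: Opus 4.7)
The plan is to prove Lemma~\ref{matchinglemma} by a minimal-counterexample argument. Define $\mathcal{F}$ to be the family of pairs $(X, M)$ with $X \subseteq V(G)$ non-empty, $Y$ the coboundary of $X$ in $G$, $|Y| \leq 3k$, and $M$ a matching of $G$ from $Y$ to $X$ saturating $Y$. Since $(V(G), \emptyset) \in \mathcal{F}$, this family is non-empty, so I would pick $(X, M) \in \mathcal{F}$ minimising $|X|$ and aim to show that $G' := G[X \cup Y]/M$ is $k$-connected.

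First I would observe that $|V(G')| = |X| > k$: every $v \in X$ has its $\geq 6k$ neighbours inside $X \cup Y$, forcing $|X| + |Y| - 1 \geq 6k$, and combined with $|Y| \leq 3k$ this gives $|X| \geq 3k + 1$.

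Suppose for contradiction that $G'$ admits a separator $S'$ with $|S'| \leq k-1$ and two non-empty sides $A', B'$. Let $\pi$ denote the contraction map, and write $\tilde A, \tilde S, \tilde B$ for the preimages of $A', S', B'$. Each vertex of $G'$ lifts under $\pi^{-1}$ to either a singleton $X$-vertex or a pair (one $X$-vertex matched by $M$ to one $Y$-vertex); let $p_A, p_S, p_B$ count the pairs on each side, so that $p_A + p_S + p_B = |Y|$, $|\tilde S| = |S'| + p_S$ (and analogously for $\tilde A, \tilde B$), while $|X \cap \tilde A| = |A'|$, $|X \cap \tilde S| = |S'|$, $|X \cap \tilde B| = |B'|$. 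Choose the side so that $p_A \leq p_B$, which yields $p_A \leq (|Y| - p_S)/2$.

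Set $X'' := X \cap \tilde A$. Since $|B'| \geq 1$ we have $|X''| = |A'| < |X|$. A vertex of $V(G) \setminus (X \cup Y)$ has no neighbour in $X \supseteq X''$, and a vertex of $\tilde B \setminus \tilde S$ has no neighbour in $\tilde A \supseteq X''$ by the cut. Therefore the coboundary $Y''$ of $X''$ satisfies $Y'' \subseteq \tilde S \cup (Y \cap \tilde A)$, and using $p_S \leq |S'| \leq k-1$ I compute
\[
|Y''| \leq |\tilde S| + |Y \cap \tilde A| = |S'| + p_S + p_A \leq (k-1) + \tfrac{|Y| + p_S}{2} \leq (k-1) + \tfrac{3k + (k-1)}{2} = 3k - \tfrac{3}{2},
\]
so $|Y''| \leq 3k$. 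Finally, I would construct a matching $M''$ in $G$ saturating $Y''$ into $X''$: the restriction $M_0 := M \cap E(G[\tilde A])$ already matches $Y \cap \tilde A$ to $X''$, and for the remaining $Y''$-vertices (which lie in $\tilde S$) I would invoke Hall's theorem in the bipartite graph between these vertices and the unused vertices of $X''$, exploiting that each such vertex has $\geq 6k$ neighbours in $G$ and, by the cut, most of these neighbours are forced into $\tilde A \cup \tilde S$. If the direct Hall condition is tight, an augmenting-path argument against $M_0$ re-routes existing matches to free up $X''$-capacity. This produces $(X'', M'') \in \mathcal{F}$ with $|X''| < |X|$, contradicting the minimality of $|X|$.

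The main obstacle is this last step: the $|Y''|$ and $|V(G')|$ bounds already leave some room to spare, but explicitly realising the matching $M''$ requires a careful exploitation of the gap between the $6k$ minimum-degree assumption and the $3k$ coboundary bound, probably via Hall's theorem combined with augmenting-path rerouting of $M_0$, so that the newly-exposed $Y''$-vertices (including elements of $X \cap \tilde S$ that were previously interior to $X$) can all be simultaneously matched into $X''$.
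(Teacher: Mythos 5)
First, a point of reference: the paper does not prove Lemma~\ref{matchinglemma} at all --- it is quoted from Norin and Postle~\cite{norinpostlelist} --- so there is no in-paper argument to compare against, and your attempt has to be judged on its own terms. The skeleton you set up is sound: the family $\mathcal{F}$ is non-empty, the bound $|V(G')|=|X|\ge 3k+1$ follows correctly from the degree hypothesis (all neighbours of an $X$-vertex lie in $X\cup Y$ and $|Y|\le 3k$), and your accounting for the coboundary of $X''=X\cap\tilde A$, namely $Y''\subseteq \tilde S\cup(Y\cap\tilde A)$ and hence $|Y''|\le |S'|+p_S+p_A\le 3k-\tfrac{3}{2}$ after choosing the side with $p_A\le p_B$, is correct and is exactly the kind of computation that explains the constants $6k$ and $3k$.

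The gap is the one you flag yourself, and it is not a technicality: you never establish that $Y''$ can be saturated by a matching into $X''$, and the tools you point to do not suffice. The minimum-degree hypothesis controls where neighbours of $X$-vertices lie ($X\cup Y$), but it says nothing about how the neighbours of a vertex of $\tilde S$ are distributed across the cut: a separator vertex of $G'$ may have essentially all of its neighbours in $\tilde S\cup\tilde B$ (and a $Y$-vertex inside a contracted pair of $S'$ may even have most of its neighbours outside $X\cup Y$), with only a single neighbour in $X''$. In particular, several vertices of $Y''\cap\tilde S$ could share one common $X''$-neighbour, in which case Hall's condition fails outright for the bipartite graph you describe, and augmenting against $M_0$ cannot help: if $|N(T)\cap X''|<|T|$ for some $T\subseteq Y''$, no rerouting produces a saturating matching. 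So your heuristic that ``by the cut, most of these neighbours are forced into $\tilde A\cup\tilde S$'' is precisely what fails for separator vertices (the cut constrains $\tilde A$--$\tilde B$ adjacencies, not the neighbourhoods of $\tilde S$). To finish along these lines something genuinely additional is needed --- a more careful choice of the separation and of the recursion side, a stronger extremal criterion than minimising $|X|$ (so that a violated Hall set can be converted into a better separation or a better pair $(X,M)$), or a different definition of $X''$ and $Y''$ that treats the $\tilde S$-vertices structurally rather than by a degree count. That missing step is the real content of the lemma: without the saturating matching, the statement collapses to a Mader-type ``highly connected subgraph'' assertion, which is much weaker than what the applications in Section~3 require.
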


We also need the following theorem of B\"{o}hme, Kawarabayashi, Maharry and Mohar:

\begin{thm}[\cite{Bohme2009}]
\label{smallnumberofvertices}
For every $t\geq 0$, there is an integer $N(t)$ such that if $G$ is a $\left\lceil \frac{31}{2}(t+1)]\right\rceil$-connected graph and $v(G)>N(t)$, then $G$ has a $K_t$ minor.
\end{thm}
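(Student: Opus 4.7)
The final statement is a deep theorem from graph minor theory due to Böhme, Kawarabayashi, Maharry and Mohar. The plan would be to argue by contraposition: assume $G$ is $K_t$-minor-free and $\lceil \tfrac{31}{2}(t+1)\rceil$-connected, and derive an upper bound $v(G) \le N(t)$. The strategy rests on the Robertson--Seymour Graph Minor Structure Theorem combined with extremal results for graphs on surfaces.

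First I would apply the structure theorem to $G$: there exist constants $g(t), a(t), d(t)$ depending only on $t$ such that $G$ admits a tree decomposition with bounded adhesion in which every torso is $(g(t), a(t), d(t))$-almost-embeddable, meaning it embeds in a surface of Euler genus at most $g(t)$ after deleting at most $a(t)$ apex vertices and absorbing at most $g(t)$ vortices of depth at most $d(t)$ into faces. Since the adhesion sets have bounded size (depending only on $t$) and $G$ is $\lceil \tfrac{31}{2}(t+1)\rceil$-connected, for $t$ large enough no adhesion set can form a genuine separator of a big piece. Therefore the tree decomposition essentially collapses and $G$ itself is $(g(t), a(t), d(t))$-almost-embeddable. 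Next I would remove the set $A$ of at most $a(t)$ apex vertices; $G - A$ is still highly connected (connectivity drops by at most $a(t)$) and, after handling the bounded-depth vortices by carefully treating their boundary sequences, embeds into a surface of Euler genus at most $g(t)$.

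The remaining step is to show that a sufficiently large, highly connected, almost-embedded graph on a surface of bounded genus must contain $K_t$ as a minor. This I would prove using a combination of (i) Euler's formula on the surface to force dense regions, (ii) the existence of large grid-like or planar subregions in big surface-embedded graphs, and (iii) the fact that in planar regions of high local connectivity one can realize $K_t$ minors by routing bounded-length disjoint paths and exploiting the apex vertices to supply the final few branch sets. Bounding $v(G)$ then follows: if $v(G) > N(t)$ for a suitable $N(t)$, this procedure produces a $K_t$ minor in $G-A$, hence in $G$, contradicting minor-freeness.

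The main obstacle is matching the exact constant $\tfrac{31}{2}(t+1)$. The number $31/2$ does not fall out of any soft argument; it emerges from a careful balance between (a) the connectivity consumed by deleting apices and resolving vortices, (b) the connectivity needed to route enough vertex-disjoint paths between branch sets in the surface part, and (c) the connectivity needed in intermediate extremal lemmas (for example, results guaranteeing $K_{s,t}$ minors in sufficiently connected graphs that then get upgraded to $K_t$ minors via apices). Tracking these contributions precisely is the technical heart of \cite{Bohme2009}, and is where my sketch would necessarily hand off to their argument.
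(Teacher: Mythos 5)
You should first note that the paper does not prove this statement at all: it is imported as a black box from \cite{Bohme2009} (it is stated with that citation precisely because its proof is far beyond the scope of this paper), so there is no internal proof to compare your sketch against. Judged on its own terms, your outline correctly identifies the machinery behind the cited result --- the Robertson--Seymour structure theorem for graphs with no $K_t$ minor, deletion of the bounded apex set, treatment of the bounded-depth vortices, and an extremal analysis of large, highly connected, nearly surface-embedded graphs --- and this is indeed the flavour of argument carried out by B\"ohme, Kawarabayashi, Maharry and Mohar.

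As a proof, however, it has genuine gaps. Every quantitative step is asserted rather than established: that $\left\lceil \frac{31}{2}(t+1)\right\rceil$-connectivity forces the tree decomposition to collapse to essentially a single torso, that the vortices can be absorbed without destroying the connectivity you still need for routing, and above all that this specific linear threshold suffices --- you concede yourself that the constant $\frac{31}{2}$ must be ``handed off'' to the original argument, and that hand-off is exactly the technical heart of the theorem, not a detail. Your route is also not quite the one in the literature: the main theorem of \cite{Bohme2009} produces large complete \emph{bipartite} minors $K_{a,s}$ in sufficiently large $\frac{31}{2}(a+1)$-connected graphs, and the statement quoted here follows as a corollary (for instance, a $K_{t,t}$ minor yields a $K_t$ minor by contracting a perfect matching); your sketch aims directly at $K_t$ via dense planar regions plus apices, which is plausible in spirit but is not where the constant $\frac{31}{2}$ actually arises. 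For the purposes of this paper none of this matters, since the theorem is simply cited and used; but your proposal should be regarded as a roadmap into \cite{Bohme2009} rather than a proof of the statement.
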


To allow what follows to be more robust in the light of ongoing work towards Hadwiger's conjecture, we will define $h$ to be a function such that if $G$ has no $K_t$ minor, it is $h(t)$-list-colourable, where $h(t)$ is the smallest possible value.

Unlike in the previous sections, we prove a slightly stronger statement to allow the proof to go through. This requires a definition.
For a graph $G$, a set $X \subseteq V(G)$, and $C$ a constant, we let $m^{G}_{C}(X) = \sum_{v \in X} \min\{d_{G}(v),C\}$.

We prove the following: 

\begin{thm}\label{K_tDeletability}
Let $G$ be a $K_{t}$-minor free graph, $N(t)$ be the integer from Theorem \ref{smallnumberofvertices}, $L$ be a $13k$-list-assignment of $G$ where $k=\max \{h(t),\lceil \frac{31}{2}(t+1)\rceil \}$, and $X \subseteq V(G)$ be non-empty. If $G$ has no $13k$-deletable subgraph disjoint from $X$, then $v(G) \leq  N(t)m^{G}_{6k}(X)$. 
\end{thm}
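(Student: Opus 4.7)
I would take a vertex-minimal counterexample $G$: a $K_t$-minor-free graph together with a non-empty $X \subseteq V(G)$ for which $v(G) > N(t) \cdot m^G_{6k}(X)$, yet $G$ has no $13k$-deletable subgraph disjoint from $X$. The plan is to contradict the second property by exhibiting such a subgraph, using Lemma \ref{matchinglemma} to find a sufficiently dense region avoiding $X$ and Theorem \ref{smallnumberofvertices} to bound its size.

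First, I would establish a minimum-degree bound: every $v \in V(G) \setminus X$ satisfies $d_G(v) \geq 13k - h(t) + 1 \geq 12k + 1 \geq 6 \lceil \frac{31}{2}(t+1) \rceil$. Iteratively delete from $V(G) \setminus X$ any vertex whose current degree is at most $13k - h(t)$, and let $D$ be the resulting set. For each $v \in D$, the number of its neighbours in $V(G) \setminus D$ is at most $13k - h(t)$ (since at deletion time $v$ has at most $13k - h(t)$ neighbours in the surviving graph, which contains $V(G) \setminus D$). Hence $d_G(v) - d_{G[D]}(v) \leq 13k - h(t)$, so any list assignment $L$ with $\abs{L(v)} \geq 13k - (d_G(v) - d_{G[D]}(v))$ satisfies $\abs{L(v)} \geq h(t)$. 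Since $G[D]$ is $K_t$-minor-free and thus $h(t)$-list-colourable, $G[D]$ is $13k$-deletable; as $D \cap X = \emptyset$, the hypothesis forces $D = \emptyset$.

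Second, I would extract a dense, $X$-avoiding subgraph using the matching lemma. Since vertices of $X$ may have low degree, a direct application of Lemma \ref{matchinglemma} to $G$ is unavailable, so I would iteratively peel from $G - X$ any vertex whose current degree (in the running subgraph of $G - X$) is less than $6k$, yielding a subgraph $G'$ with $V(G') \subseteq V(G) \setminus X$ and $\delta(G') \geq 6k$. Applying Lemma \ref{matchinglemma} to $G'$ with parameter $k$ produces a non-empty $X^* \subseteq V(G')$, its coboundary $Y^* \subseteq V(G')$ in $G'$ of size at most $3k$, and a matching $M$ from $Y^*$ to $X^*$ such that $G'[X^* \cup Y^*]/M$ is $k$-connected. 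Since this is a minor of the $K_t$-minor-free graph $G$ and $k \geq \lceil \frac{31}{2}(t+1) \rceil$, Theorem \ref{smallnumberofvertices} forces $v(G'[X^* \cup Y^*]/M) \leq N(t)$, giving $\abs{X^* \cup Y^*} \leq N(t) + 3k$.

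Finally, I would use $X^*$ (possibly together with surrounding peeled vertices) to produce a $13k$-deletable subgraph of $G$ disjoint from $X$, contradicting the hypothesis. For $v \in X^*$, $N_G(v) \setminus X^* \subseteq Y^* \cup (V(G) \setminus V(G'))$, so $\abs{N_G(v) \setminus X^*} \leq 3k + (d_G(v) - d_{G'}(v))$. If for every $v \in X^*$ the neighbours outside $X^*$ in $G$ total at most $13k - h(t) \leq 12k$, then $G[X^*]$ is immediately $13k$-deletable and disjoint from $X$. The main obstacle is the alternative case: some $v \in X^*$ may have many neighbours in $X$ or in the peeled region, so $d_G(v) - d_{G'}(v)$ is not automatically small. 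I plan to handle this by iterating the extraction, producing disjoint blocks each of size at most $N(t) + 3k$, and charging each block's boundary against the budget $m^G_{6k}(X)$: every unit of $\min(d_G(x), 6k)$ is responsible for at most $N(t)$ vertices outside $X$, yielding $v(G) \leq N(t) m^G_{6k}(X)$ and the desired contradiction. Making this charging precise---in particular, ensuring that peeled vertices and boundary vertices are amortized consistently against $X$'s degree budget without double-counting---is where I expect the bulk of the technical work to concentrate.
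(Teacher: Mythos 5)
Your overall strategy---minimal counterexample, Lemma \ref{matchinglemma} to extract a dense piece, Theorem \ref{smallnumberofvertices} to bound its size, and $h(t)$-list-colourability to turn the extracted piece into a $13k$-deletable subgraph---is the same as the paper's, and your first step (no ``degenerate-towards-the-outside'' subset of $V(G)\setminus X$) is sound. But there is a genuine gap exactly where you flag it: the case in which vertices of the extracted set $X^*$ have many neighbours in $X$ or in the peeled part of $G-X$. Your plan to fix this by ``iterating the extraction'' and charging blocks against $m^G_{6k}(X)$ is not carried out, and it is not routine: because you peel $G-X$ down to minimum degree $6k$, the peeled region can be arbitrarily large (indeed all of $G-X$ may be peeled away, e.g.\ if every vertex outside $X$ has most of its neighbours in $X$, in which case Lemma \ref{matchinglemma} cannot even be applied), and a vertex $v\in X^*$ can have up to $d_G(v)-6k$ neighbours outside $G'$, which no uniform bound controls; so $G[X^*]$ need not be $13k$-deletable, and no precise amortization scheme is given.

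The paper closes this hole not by a charging argument but by a more careful extremal choice and by \emph{not} deleting all of $X$. One takes a counterexample minimizing $v(G)$, then maximizing $|X|$, then minimizing $e(G[X])$; this forces $G[X]$ to be edgeless (Claim \ref{edgeless}), and---this is where the particular function $m^G_{6k}$ earns its keep---the maximality of $X$ shows that no vertex outside $Y:=\{x\in X: d_G(x)<6k\}$ has more than $6k$ neighbours in $Y$ (Claim \ref{degreeclaim}): otherwise one adds that vertex to $X$ and deletes its edges to $X$, and $m^G_{6k}$ does not increase, since each of its at least $6k+1$ neighbours in $Y$ loses a unit below the cap while the new vertex contributes at most $6k$. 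Consequently $G-Y$ (rather than a peeled subgraph of $G-X$) already has minimum degree at least $6k$, and Lemma \ref{matchinglemma} is applied there, so the extracted set $A$ may meet $X$; the proof then splits into three cases: if $|A\cap X|\ge 3k+1$, delete $G[A]$, replace $X$ by $(X\cup B)\setminus A$, and apply minimality of $v(G)$; if $A\setminus X\ne\emptyset$ and $|A\cap X|\le 3k$, then every vertex of $G[A\setminus X]$ has at most $6k+3k+3k=12k$ neighbours outside it, so it is $13k$-deletable and disjoint from $X$; and $A\subseteq X$ is impossible since $G[X]$ is edgeless. Your proposal treats $m^G_{6k}(X)$ only as an unexplained budget and never exploits the freedom to enlarge or modify $X$ in the extremal choice and the induction, which is the key idea your sketch is missing.
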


\begin{proof}
Suppose not. Let $G$ and $X \subseteq V(G)$ be a counterexample such that $v(G)$ is minimized, subject to this, $|X|$ is maximized, and subject to that, $e(G[X])$ is minimized. We seek to prove that $v(G)\leq N(t)m^{G}_{6k}(X)$. 

\begin{claim}
\label{edgeless}
The graph $G[X]$ is edgeless.    
\end{claim}
\begin{claimproof}
Suppose not, and let $e \in E(G[X])$. If $G-e$ contains a $13k$-deletable subgraph disjoint from $X$, say $P$, then $P$ does not contain either endpoint of $e$, as $e \in E(G[X])$. 
But then it follows immediately from the definition that $P$ is a $13k$-deletable subgraph in $G$ and disjoint from $X$, a contradiction. 
\end{claimproof}

Now consider $Y \subseteq X$ such that all vertices in $Y$ have degree in $G$ strictly less than $6k$ in $G$.

\begin{claim}
\label{degreeclaim}
If $v \in V(G) - Y$, then $v$ has at most $6k$ neighbours in $Y$. Further, $G-Y$ has minimum degree at least $6k$. 
\end{claim}
\begin{claimproof}
If $v \in X \setminus Y$, then as $G[X]$ is edgeless by Claim \ref{edgeless}, by definition $v$ has degree at least $6k$ in $G - Y$. Therefore consider a vertex $v \in V(G) \setminus X$. Observe that in $G$, $v$ has degree at least $13k$. If not, then $v$ is a $13k$-deletable subgraph disjoint from $X$, a contradiction. We claim that $v$ has at most $6k$ neighbours in $Y$, and hence degree at least $7k$ in $G \setminus G[Y]$. Suppose towards a contradiction that $v$ has at least $6k+1$ neighbours in $Y$. Consider the vertex set $X' = X \cup \{v\}$, and the graph $G'$ which is obtained by deleting all edges between $v$ and $X$. Suppose that $G'$ has a $13k$-deletable subgraph disjoint from $X'$, say $P$. It then follows that $P$ is a $13k$-deletable subgraph of $G$, and since $X \subseteq X'$, $P$ is disjoint from $X$. Therefore there are no $13k$-deletable subgraphs of $G'$ disjoint from $X'$. Thus since $X$ was picked maximally, we have that $v(G') \leq N(t)m^{G'}_{6k}(X')$.  Now observe that $m^{G'}_{6k}(X') \leq m^{G}_{6k}(X) -1$, since for every vertex $y \in Y$, we have $\min \{d_{G}(v),6k\} = d_{G}(v)$, and as $v$ has at least $6k+1$ edges to vertices in $Y$, which are deleted in $G'$, causing each of these vertices to have their degree drop by one. This creates a contradiction as now we have $v(G) = v(G') \leq N(t)m^{G'}_{6k}(X') \leq N(t)m^{G}_{6k}(X)$, as desired.
\end{claimproof}

\vskip.1in

Therefore $G-Y$ has minimum degree at least $6k$. Now by applying Lemma \ref{matchinglemma} to $G$ there exists a set $A \subseteq V(G) - Y$ and a matching $M$ from the coboundary $B$ of $A$ to $A$ that saturates $B$ such that $|B| \leq 3k$ and $(G-Y)[A \cup B] /M$ is $k$-connected.

Note Theorem \ref{smallnumberofvertices} implies that $|A|+|B|-|M|\leq N(t)$. For if not, $(G-Y)[A\cup B]/M$ contains a $K_{t}$ minor, and thus $G$ contains a $K_{t}$ minor, a contradiction. Further, as $M$ saturates $B$, it follows that $|M|=|B|$. Thus $|A|\leq N(t)$. Now we break into cases depending on how many vertices of $X$ are in $A$.

\vskip.1in
\noindent \textbf{Case 1: $A$ contains at least $3k+1$ vertices of $X$}
\vskip.1in

In this case we claim we can find a $13k$-deletable subgraph in $G$ disjoint from $X$. Let $G'=G-G[A]$ and let $X'=(X\cup B)\setminus A$. Note that $\abs{X'}<\abs{X}$ as $A$ contains at least $3k+1$ vertices of $X$, and $|B| \leq 3k$. We claim $G'$ has no $13k$-deletable subgraph disjoint from $X'$. Suppose for a contradiction $G'$ had such an induced subgraph $P$. Immediately we have $P$ is disjoint from $X$, since $P$ is a subgraph of $G'$, and $X'$ contains all vertices of $X$ except those in $A$.  In $G$, as $P$ is disjoint from $B$ and $Y$, no vertex in $P$ is adjacent to a vertex in $A$, as $B$ is the coboundary of $A$. It follows that $P$ is $13k$-deletable and disjoint from $X$ in $G$, a contradiction. Thus by minimality of $G$, we have that $v(G') \leq N(t)m^{G'}_{6k}(X')$. Thus
\begin{align*}
v(G) &\leq N(t)m^{G'}_{6k}(X') +|A| \\
&\leq N(t)\left(\sum_{v \in B} \min\{d_{G'}(v),6k\} + \sum_{v \in X' -B} \min\{d_{G'}(v),6k\}\right) + |A| \\
&\leq N(t)\left(6k|B| + \sum_{v \in X'-B} \min\{d_{G'}(v),6k\}\right) + |A| \\
&\leq N(t)\left(6k|A \cap X| + \sum_{v \in X'-B} \min\{d_{G'}(v),6k\}\right) +|A| - 6kN(t) \\
&\leq N(t)m^{G}_{6k}(X).
\end{align*}

These inequalities follow since $|A \cap X| \geq 3k+1$, $|B| \leq 3k$,  $|A| \leq N(t)$, and all of the vertices in $A \cap X$ are not in $Y$. But now we have a contradiction, so $|A| \leq 3k$. 

\vskip.1in
\noindent \textbf{Case 2: $\abs{A \cap X} \leq 3k$ and $A \setminus X \neq \emptyset$}
\vskip.1in

Let $P=G[A-X]$. We claim that $P$ is a $13k$-deletable subgraph of $G$ disjoint from $X$. Indeed, every vertex in $P$ has at most $6k$ neighbours in $Y$ by Claim \ref{degreeclaim}, at most $3k$ neighbours in $A\cap X$ and at most $3k$ neighbours in $B$. It follows that each vertex in $P$ has at most $12k$ neighbours outside of $P$. Therefore to show that $P$ is $13k$-deletable, it suffices to show that given lists of size at least $k$, $P$ is $k$-list-colourable. But $k \geq h(t)$, so $P$ is indeed $k$-list-colourable, a contradiction.

\vskip.1in
\noindent \textbf{Case 3: $A \subseteq X$ and $\abs{A \cap X} \leq 3k$}
\vskip.1in

For any vertex $v \in A$, as $G[X]$ is edgeless, it follows that in $G-Y$ the vertex $v$ has degree at most $3k$. Hence as $G[X]$ is edgeless, we again get that $v$ has degree at most $3k$ in $G$, but by construction $v \not \in Y$, so $v$ has degree at least $6k$ in $G$, a contradiction. 
 
As this covers all possibilities, we have derived a contradiction and hence the theorem follows. 
\end{proof}

\section{List-colouring Locally-$K_{t}$-minor-free graphs} 
\label{exponentialgrowthandlocalhadwigers}
In this section we prove Theorem \ref{smallvaluesoft} and Theorem \ref{largevaluesoft}. We prove both Theorem \ref{smallvaluesoft} and Theorem \ref{largevaluesoft} together. As it will be relevant for the proof, let $N^{i}(v)$ denote the vertices at distance exactly $i$ from $v$. 
\begin{thm}
For each $t \geq 3$, there exists a constant $c_{t}$ such if a graph $G$ is  $\lceil c_{t}\log(v(G)) \rceil$-locally-$K_{t}$-minor-free, then it is $f(t)$-list-colourable, where $f(3)= 3$, $f(4)=4$,$f(5) = 5$ and for $t \geq 6$, we have $f(t) = 13\max\{h(t),\lceil \frac{31}{2}(t-1) \rceil\}$. 
\end{thm}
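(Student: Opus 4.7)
The plan is to combine the deletability theorems from Sections~\ref{smalltsection} and~\ref{largevalueoftprecolouring} with a hyperbolicity-style exponential growth argument on balls. Suppose for a contradiction that the statement fails for some $t$, and let $G$ be a vertex-minimum counterexample, together with an $f(t)$-list-assignment $L$ for which $G$ is not $L$-colourable. We may assume $G$ is connected, since otherwise we could colour each component separately. By minimality, every proper induced subgraph of $G$ is $L$-colourable, so $G$ is $\emptyset$-critical with respect to $L$; in particular $G$ contains no $f(t)$-deletable induced subgraph, for any such $H$ would admit a colouring extension from $G - V(H)$, contradicting the choice of $L$.

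Now fix $v \in V(G)$, and for each $1 \leq r \leq c := \lceil c_t \log v(G)\rceil$, let $\partial_r(v)$ denote the set of vertices of $B_r[v]$ with a neighbour in $V(G) \setminus V(B_r[v])$. All such vertices lie in $N^r(v)$, so $|\partial_r(v)| \leq |B_r[v]| - |B_{r-1}[v]|$. The key observation is that any induced subgraph $H$ of $B_r[v]$ disjoint from $\partial_r(v)$ satisfies $d_G(u) = d_{B_r[v]}(u)$ for every $u \in V(H)$, so $f(t)$-deletability inside $B_r[v]$ is equivalent to $f(t)$-deletability inside $G$ for such $H$. Hence $B_r[v]$ has no $f(t)$-deletable induced subgraph disjoint from $\partial_r(v)$. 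Since $B_r[v]$ is $K_t$-minor-free by hypothesis, if $\partial_r(v) = \emptyset$ for some $r \leq c$, then $B_r[v] = G$ by connectivity and so $G$ is itself $K_t$-minor-free, hence $f(t)$-list-colourable (by Theorem~\ref{K5choosability} together with the elementary degeneracy bound for $t \in \{3,4\}$, and by the definition $f(t) \geq h(t)$ for $t \geq 6$), contradicting the choice of $G$.

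Thus $\partial_r(v) \neq \emptyset$ for every $1 \leq r \leq c$, and I can apply the appropriate deletability theorem (Theorem~\ref{K3deletability}, \ref{K4minorfree}, \ref{K5minorfree}, or \ref{K_tDeletability}) to $B_r[v]$ with $X := \partial_r(v)$ to obtain $|B_r[v]| \leq C_t |\partial_r(v)|$, where $C_t$ is a constant depending only on $t$; for large $t$, one uses the bound $m_{6k}^{B_r[v]}(\partial_r(v)) \leq 6k |\partial_r(v)|$ to set $C_t = 6k N(t)$. Combining with $|\partial_r(v)| \leq |B_r[v]| - |B_{r-1}[v]|$ gives the recursion $|B_r[v]| \geq \tfrac{C_t}{C_t - 1} |B_{r-1}[v]|$, and iterating from $|B_0[v]| = 1$ produces $|B_c[v]| \geq (C_t / (C_t - 1))^c$. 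Choosing $c_t > 1 / \log(C_t/(C_t-1))$ forces this lower bound to exceed $v(G)$, contradicting $|B_c[v]| \leq v(G)$. The main technical point, and the one to state carefully, is the equivalence of $f(t)$-deletability in $B_r[v]$ and in $G$ for induced subgraphs disjoint from $\partial_r(v)$: this is precisely where the strong, $G$-degree based definition of $r$-deletable is put to use, and where the locality hypothesis allows us to transfer the global criticality of $G$ into a local bound on each ball.
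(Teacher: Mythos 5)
Your proposal is correct and follows essentially the same route as the paper: transfer the criticality of a minimum counterexample into the balls $B_r[v]$ via the $G$-degree-based definition of deletability, apply the deletability theorems with $X$ the outer layer (the paper uses $N^r(v)$ where you use the slightly smaller boundary $\partial_r(v)$, which changes nothing), and derive the exponential growth $v(B_r[v])\ge \frac{C_t}{C_t-1}v(B_{r-1}[v])$ that contradicts $v(G)$ for $c_t$ chosen as you do. The only cosmetic difference is that the paper dispatches the case that $G$ itself is $K_t$-minor-free directly at the outset, whereas you reach it through the case $\partial_r(v)=\emptyset$; both are fine.
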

\begin{proof}
As before, let $k = \max\{h(t),\lceil \frac{31}{2}(t-1) \rceil\}$. Define the constants $c'_{t}$ where $c'_{3} = 2$, $c'_{4} = 11$, $c'_{5} = 61c$, where $c$ is the constant in Theorem \ref{Lukestheorem}, and $c'_{t} = 6N(t)k$ for $t \geq 6$, where $N(t)$ is the integer from Theorem \ref{smallnumberofvertices}.  Let $c_{t} = \frac{1}{\log(\frac{c'_{t}}{c'_{t}-1})}$. 

Suppose the theorem does not hold for this choice of $c_t$. Thus there exists $t\ge 3$, a graph $G$ and a $f(t)$-list assignment $L$ of $G$ such that $G$ is not $L$-colorable. We assume without loss of generality that every proper subgraph of $G$ is $L$-colorable. It then follows that $G$ is connected.

If $G$ is $K_{t}$-minor free, then by the choice of $f(t)$, we find that $G$ is $L$-colourable, a contradiction. So we assume that $G$ is not $K_t$-minor-free.

Fix $v\in V(G)$. Since $G$ is $\lceil c_t \log(v(G))\rceil$-locally-$K_t$-minor-free, it follows that there exists vertices in $G$ at distance more than $\lceil c_{t}\log(v(G)) \rceil$ from $v$. Let $q$ be the largest integer such that $q \leq \lceil c_{t}\log(v(G)) \rceil$. For an integer $i\ge 0$, we let $N^i(v)$ denote the vertices at distance exactly $i$ from $v$ and we let $B^{i}$ denote the subgraph of $G$ induced by vertices of $G$ at distance at most $i$ from $v$.
\begin{claim}
\label{neighbourhoodsgrowbound}
For all $i \in \{1,\ldots,q\}$, $v(B^{i}) \leq c_{t}' |N^{i}(v)|$.
\end{claim}

\begin{claimproof}
If not, by Theorems \ref{K3deletability}, \ref{K4minorfree}, \ref{K5minorfree} and \ref{K_tDeletability}, then $B^{i}$ contains a $f(t)$-deletable subgraph $P$ disjoint from $N^{i}(v)$. As $P$ is disjoint from $N^{i}(v)$, it is an $f(t)$-deletable subgraph of $G$. Since $G\setminus V(P)$ is a proper subgraph of $G$, we have that there exists an $L$-colouring of $G\setminus V(P)$. As $P$ is $f(t)$-deletable, we find that this $L$-colouring extends to an $L$-colouring of $G$, a contradiction.
\end{claimproof}

\begin{claim}\label{cl:neighbor}
For all $i \in \{1,\ldots,q\}$ we have 
\[v(B^i) \geq \frac{c_{t}'}{c_{t}'-1}\cdot v(B^{i-1}).\]
\end{claim}

\begin{claimproof}
Recall that $v(B^{0}) =1$, and $|N^{0}(v)| =1$. By Claim \ref{neighbourhoodsgrowbound}, we have 
\[v(B^{i}) \leq c_{t}' |N^{i}(v)|,.\]
Thus 
\[v(B^{i-1}) = v(B^{i}) - |N^{i}(v)| \leq (c_{t}'-1)|N^{i}(v)|.\]
Hence
\[v(B^i) = v(B^{i-1}) + |N^i(v)| \geq v(B^{i-1})+ \frac{1}{c_{t}'-1}\cdot v(B^{i-1}) = \frac{c_{t}'}{c_{t}'-1} \cdot v(B^{i-1}),\]
as desired.
\end{claimproof}

\vskip.1in

Since $v(B^0) =1$, it follows from Claim~\ref{cl:neighbor} that for all $i\in \{1,\ldots,q\}$
\[v(B^i) \geq \left(\frac{c'_{t}}{c'_{t}-1}\right)^{i} \geq 2^{\frac{i}{c_{t}}}\]
However since there exists a vertex in $G$ at distance strictly more than $q$ from $v$, we find that 
\[v(G) \ge 1+v(B^q) > v(B^q) \ge 2^{\frac{c_{t}\log(v(G))}{c_{t}}} = v(G),\]
a contradiction. 
\end{proof}

\section{Wallets and Local Algorithms}\label{clutchExist}

In this section we prove the existence of wallets in $K_{4}$ and $K_{5}$-minor-free graphs as well at wallets in $K_{t}$-minor-free graphs for large $t$. These are technical theorems, possibly of independent interest, required to show the correctness of our algorithm. We use this result to give efficient distributed algorithms for list-colouring $K_{4}$-minor-free and $K_{5}$-minor-free graphs. 

We now define a wallet.

\begin{definition}
	Let $G$ be a graph and let $C$, $c$ and $d$ be given constants. A \emph{$(C,c,d)$-wallet} is a collection $\mathcal{H}$ of disjoint induced subgraphs of $G$ with the following properties:
	\begin{itemize}
		\item Every $H\in\mathcal{H}$ is a $C$-pocket,
		\item Every $H\in \mathcal{H}$ is $c$-deletable or $d$-deep in $G$,
		\item There are no edges of $G$ with endpoints in two different elements of $\mathcal{H}$, nor any vertices in two different elements of $\mathcal{H}$ (they are non-touching) and
		\item $\abs{\mathcal{H}}\geq \frac{1}{2C}v(G)$.
	\end{itemize}
\end{definition}

This definition is slightly different than that given in \cite{Postlealgorithms}, as we do not allow purses (as our graphs are not embedded on surfaces). Nevertheless, from an algorithmic perspective, our definition is essentially the same. We will actually need a stronger notion. A \textit{deletable $(C,c)$-wallet} is a $(C,c,d)$-wallet in which each pocket is $c$-deletable. Our goal will be to find deletable $(C,c)$-wallets, which will follow easily once we find $(C,c,d)$-wallets.

We will need a lemma from \cite{Postlealgorithms} showing that minor-closed classes admit nice separators. 

\begin{lemma}[\cite{Postlealgorithms}]
\label{initialPockets}
	For every proper minor-closed family $\mathcal{F}$ and every $\epsilon>0$, there exists a constant $C$ such that the following holds: If $G$ is a graph in $\mathcal{F}$, then there exists $X\subseteq V(G)$ such that $\abs{X}\leq \epsilon v(G)$ and every component of $G-X$ is a $C$-pocket.
\end{lemma}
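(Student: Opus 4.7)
The plan is to combine two classical results about proper minor-closed families. Since $\mathcal{F}$ is proper, it excludes some $K_t$, so by the Kostochka--Thomason theorem every $G\in\mathcal{F}$ is $d$-degenerate for a constant $d=d(\mathcal{F})$. Moreover, by the Alon--Seymour--Thomas separator theorem, every $G\in\mathcal{F}$ admits a balanced separator of size $O(\sqrt{v(G)})$, where the implicit constant depends only on $\mathcal{F}$. I would build $X$ as a disjoint union $X_{1}\cup X_{2}$, where $X_{1}$ absorbs the high-degree vertices of $G$ (needed for the degree condition in the $C$-pocket definition), and $X_{2}$ shatters the remainder into small pieces.

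For $X_{1}$, note that $d$-degeneracy implies the sum of degrees in $G$ is at most $2d\,v(G)$, and so the number of vertices of degree exceeding $D$ is at most $2d\,v(G)/D$. Choose $D=D(\epsilon,\mathcal{F})$ so that $2d/D\le\epsilon/2$ and let $X_{1}$ be the set of vertices of $G$ of degree greater than $D$; then $|X_{1}|\le\tfrac{\epsilon}{2}v(G)$. For $X_{2}$, I would apply the Alon--Seymour--Thomas separator recursively to $G$: a standard Lipton--Tarjan style calculation (solving $T(n)\le c\sqrt{n}+T(a)+T(b)$ with $a+b=n$ and $a,b\le 2n/3$, recursing until parts have size at most $C$) shows that for every constant $C$ there exists $X_{2}\subseteq V(G)$ of size $O(v(G)/\sqrt{C})$ such that every component of $G-X_{2}$ has at most $C$ vertices. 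At each recursive step the subproblem is an induced subgraph of $G$, which remains in $\mathcal{F}$ by minor-closure, so the separator bound applies uniformly throughout the recursion. Choose $C=C(\epsilon,\mathcal{F})$ large enough that $|X_{2}|\le\tfrac{\epsilon}{2}v(G)$ and $C\ge D$.

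Setting $X=X_{1}\cup X_{2}$, we have $|X|\le\epsilon v(G)$. Any component $H$ of $G-X$ is connected with $v(H)\le C$ since $X_{2}\subseteq X$, and every vertex of $H$ has degree at most $D\le C$ in $G$ because $V(H)\cap X_{1}=\emptyset$. Hence $H$ is a $C$-pocket, as required. The main technical content lies in the recursive separator analysis, which is classical; the only subtle point I would emphasize is that the degree bound in the definition of a $C$-pocket is measured in the full graph $G$ rather than in $G-X$, which is precisely why $X_{1}$ must be chosen a priori rather than folded into the separator recursion.
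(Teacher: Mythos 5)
Your proof is correct. Note that this paper does not prove the lemma itself but imports it from \cite{Postlealgorithms}, and your argument is exactly the standard route used there: cap the degrees via bounded degeneracy of a proper minor-closed class, then shatter the graph into pieces of size at most $C$ by recursive Alon--Seymour--Thomas separators, where the key (classical, Frederickson-style) point you correctly rely on is that the total separator cost is $O(v(G)/\sqrt{C})$ with the implicit constant independent of $C$, so both parts of $X$ can be driven below $\tfrac{\epsilon}{2}v(G)$; your closing remark that the degree bound in the definition of a $C$-pocket is taken in $G$, forcing the high-degree vertices into $X$ up front, is also the right subtlety to flag.
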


\subsection{$(C,4,12)$-Wallets in $K_4$-Minor-Free Graphs}

For this subsection, we assume $C$ is a constant given from Lemma \ref{initialPockets} when $\mathcal{F}$ is the family of $K_{4}$-minor-free graphs. We build towards showing that every $K_{4}$-minor-free graph contains a $(C,4,12)$-wallet.

\begin{lemma}
	Let $G$ be a $K_{4}$-minor-free graph. Every $C$-pocket with a coboundary of size 1 can be coloured with respect to any list assignment that gives 2 colours to vertices on the boundary and 4 colours to every other vertex.
\end{lemma}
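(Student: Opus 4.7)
The plan is to prove the lemma via a strengthening amenable to induction. Specifically, I would show: for any $K_4$-minor-free graph $H$ and any $B \subseteq V(H)$ such that $H[B]$ is a forest, every list assignment $L$ with $|L(v)| \geq 2$ for $v \in B$ and $|L(v)| \geq 4$ for $v \in V(H) \setminus B$ admits an $L$-colouring. The lemma follows by taking $B := \partial H$: if $H[\partial H]$ contained a cycle $C$, then $V(C)$ together with the unique coboundary vertex $u$ would form a $K_4$-minor in $G$ (contract all but three consecutive vertices of $C$ and use $u$, which is adjacent in $G$ to every vertex of $\partial H$), contradicting that $G$ is $K_4$-minor-free. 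So $H[\partial H]$ is a forest.

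For the inductive step, I first try to peel off a single vertex $v$ with $|L(v)| > d_H(v)$. If some $v \in V(H)\setminus B$ has $d_H(v)\le 3$, then since $|L(v)|\ge 4$, we remove $v$, apply induction to $(H-v, B)$, and extend to $v$ with at least $|L(v)|-d_H(v)\ge 1$ colour remaining. Similarly, if some $v\in B$ has $d_H(v)\le 1$, then $(H-v)[B\setminus\{v\}]$ remains a forest, so we may again remove $v$, apply induction, and extend.

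The main obstacle is the remaining case, in which every vertex of $V(H)\setminus B$ has $d_H\ge 4$ and every vertex of $B$ has $d_H\ge 2$. Since $K_4$-minor-free graphs are $2$-degenerate, $H$ has a vertex of degree at most $2$, which the above forces to be some $v \in B$ with $d_H(v) = 2$. Moreover, any leaf (or isolated vertex) of the forest $H[B]$ satisfies $d_{H[B]}(v)\le 1$, so since $d_H(v)\ge 2$ it must have a neighbour $w \notin B$. My plan is to pick such a leaf $v$ of $H$-degree $2$, and reduce to $(H-v, B')$ with $B':=(B\setminus\{v\})\cup\{w\}$ and modified list $L'(w):=L(w)\setminus L(v)$ (which still has size at least $4-2=2$); applying induction, the colour of $w$ avoids $L(v)$, and then $v$ can be coloured from $L(v)\setminus\{c(y)\}$ where $y$ is the other neighbour of $v$, giving at least one valid colour. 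The technical crux is verifying that $(H-v)[B']$ remains a forest, which requires crucially that $G$ (not merely $H$) is $K_4$-minor-free: the augmented graph $H^+ := G[V(H)\cup\{u\}]$ is a subgraph of $G$ and hence $K_4$-minor-free, and this constrains how $w$ can be attached to $B$ in $H$. If this forest verification fails for the chosen $w$, one pivots — either to a different leaf $v$ of $H[B]$, to a different non-$B$ neighbour of $v$, or (in the case that both neighbours of $v$ lie outside $B$) to a reduction that identifies the two neighbours, in each case exploiting the structural restrictions imposed by the $K_4$-minor-freeness of $G$.
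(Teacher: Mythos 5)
Your proposed strengthening is false, so the induction has no true statement to run on. Take $H$ to be the fan on five vertices: a path $v_1v_2v_3v_4$ together with an apex $u$ adjacent to every $v_i$. This graph has treewidth $2$ (bags $\{u,v_1,v_2\},\{u,v_2,v_3\},\{u,v_3,v_4\}$), hence is $K_4$-minor-free, and $B=\{v_1,v_2,v_3,v_4\}$ induces a path, which is a forest. Put $L(u)=\{1,2,3,4\}$, $L(v_1)=L(v_2)=\{1,2\}$ and $L(v_3)=L(v_4)=\{3,4\}$. If $u$ gets a colour in $\{1,2\}$, the adjacent pair $v_1,v_2$ are left with the same single colour; if $u$ gets a colour in $\{3,4\}$, the pair $v_3,v_4$ are. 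So $H$ is not $L$-colourable although $|L(v)|\ge 2$ on $B$ and $|L(v)|\ge 4$ off $B$. What your strengthening discards is exactly what makes the lemma true: in the lemma the $2$-list vertices are precisely the neighbours of the single coboundary vertex $x$, and $G[V(H)\cup\{x\}]$ is $K_4$-minor-free. In the fan configuration one cannot add such an $x$ adjacent to all of $B$, because a path with two apexes already has a $K_4$ minor (branch sets $\{u,v_1\},\{x,v_2\},\{v_3\},\{v_4\}$). So ``$H[B]$ induces a forest'' is genuinely weaker than ``$B$ is dominated by one external vertex in a $K_4$-minor-free graph'', and the inductive step cannot be saved by pivoting: in the fan there is no leaf or alternative neighbour to pivot to, since the conclusion itself fails. (Your write-up is also internally inconsistent on this point: the strengthening mentions no ambient graph, yet the forest verification is said to rely on $K_4$-minor-freeness of $G$; moreover $2$-degeneracy gives some $B$-vertex of degree $2$ in $H$, not necessarily one that is a leaf of $H[B]$.) Your opening observation that $\partial H$ induces a forest is correct, but it is not enough.

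For comparison, the paper's proof never abandons $x$. In a minimal counterexample it finds a cycle $Q$ in $H$, notes that there are at most two vertex-disjoint paths from $x$ to $Q$ with distinct ends (three would yield a $K_4$ minor), so the ends $a,b$ of a maximum such family form a cutset of size at most two; it colours the side containing $x$ by minimality, contracts that side together with $a,b$ to a single vertex $y$, deletes the colours of $a$ and $b$ from the lists of the neighbours of $y$, and applies minimality again to the resulting pockets, each of which again has coboundary of size one. Any repair of your approach would likewise have to carry along the common external neighbour (or an equivalent bounded interface) through the induction rather than replacing it by the forest condition.
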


\begin{proof}
	Let $H$ be a $C$-pocket which is a vertex minimal counterexample, and let $L$ be a list assignment as in the lemma statement. Let $x$ be the single vertex in the coboundary of $H$. Let $H'=G[H\cup\{x\}]$. If $H$ is acyclic, it is a tree, and as trees are $2$-list-colourable, there is a desired $L$-colouring. Thus $H$ contains a cycle $Q$. Consider the maximum number of vertex-disjoint paths from $x$ to $Q$ that also have distinct endpoints in $Q$. There are at most two of them since otherwise $H'$ contains a $K_4$ minor. Let $a$ and $b$ be the endpoints of these paths (where possibly $a =b$). Then $\{a,b\}$ form at most a $2$-vertex cut and $H'-a-b$ has at least two components. Let $A$ be the component containing $x$. By minimality, $A-x$, has an $L$-colouring.  Let $\phi$ be such a colouring. We now consider $L'=H'/(A\cup\{a,b\})$. Call the contracted vertex $y$. Then every component of $L'-y$ is a $C$-pocket of $L'$ with coboundary $\{y\}$. All of the vertices in these components had four colours in their lists. Remove $\phi(a)$ and $\phi(b)$ from the lists of vertices adjacent to $y$ in $L'$. As $L'$ is a minor of a graph that is $K_{4}$-free, $L'$ is $K_4$-minor free. Thus, by minimality of $H$, all the components of $L'-y$ can be coloured with some $L$-colouring $\psi$. Taken as an  $L$-colouring of $H'-x-a-b$, $\psi$ does not create any conflicts with $\phi$ by construction of the lists, and thus $H$ has a desired $L$-colouring. 
\end{proof}

This gives us two immediate corollaries. The first is a straightforward weakening.
\begin{cor}
	Every $C$-pocket with coboundary of size 1 is $4$-deletable.
\end{cor}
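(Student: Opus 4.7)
The plan is to unpack the definition of $4$-deletability and observe that the list-size requirement it imposes is strictly stronger than the hypothesis of the preceding lemma, so the lemma applies verbatim.

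Concretely, suppose $H$ is a $C$-pocket in $G$ with coboundary $\{x\}$ of size $1$. Let $L$ be any list assignment with $|L(v)| \geq 4 - (d_G(v) - d_H(v))$ for every $v \in V(H)$; we must produce an $L$-colouring of $H$. I would split on whether $v$ lies in the boundary of $H$ or not. If $v$ is not in the boundary, then $d_G(v) = d_H(v)$, so $|L(v)| \geq 4$. If $v$ is in the boundary, then by definition $v$ has a neighbour in $V(G)\setminus V(H)$; since the coboundary equals $\{x\}$, the only such neighbour is $x$, so $d_G(v) - d_H(v) = 1$ and hence $|L(v)| \geq 3 \geq 2$. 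Thus $L$ satisfies the hypothesis of the preceding lemma (four colours on interior vertices, at least two on boundary vertices), which produces the desired $L$-colouring.

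I expect there to be no real obstacle: the only subtlety is remembering that "boundary" vertices of $H$ are those of $H$ with a neighbour outside $H$, and combining this with the coboundary being a single vertex $x$ to conclude $d_G(v) - d_H(v) = 1$ exactly. Once that bookkeeping is in place the conclusion is immediate from the previous lemma, so the corollary really is a routine weakening.
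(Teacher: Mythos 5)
Your proposal is correct and matches the paper, which treats this corollary as an immediate ("straightforward weakening") consequence of the preceding lemma: the deletability condition forces lists of size at least $3$ on boundary vertices (since the sole outside neighbour is the coboundary vertex $x$) and at least $4$ elsewhere, which is stronger than the lemma's hypothesis of $2$ and $4$. Your explicit bookkeeping of $d_G(v)-d_H(v)$ is exactly the unstated verification behind the paper's one-line claim.
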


\begin{cor}
	If $P$ is a $C$-pocket with coboundary $\{a,b\}$ such that adding an edge between $a$ and $b$ in $G$ does not create a $K_4$ minor, then $C$ is $4$-deletable.
\end{cor}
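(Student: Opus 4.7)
The plan is to reduce to the previous lemma (which handles $C$-pockets with coboundary of size $1$) through a contraction trick. The key idea is that under the hypothesis, the graph $H^{*} := G[V(P) \cup \{a,b\}] + ab$ is $K_4$-minor-free (as a subgraph of $G + ab$, which has no $K_4$ minor by assumption), and contracting the edge $ab$ in $H^{*}$ produces a $K_4$-minor-free graph $H^{**}$ in which $V(P)$ is a $C$-pocket whose coboundary is the single identified vertex $x$.

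First I would verify that $V(P)$ is indeed a $C$-pocket in $H^{**}$: connectivity and cardinality are inherited from $P$ being a $C$-pocket in $G$, while the degree bound $d_{H^{**}}(v) \leq d_G(v) \leq C$ holds because the coboundary of $P$ in $G$ is exactly $\{a,b\}$, so contracting $ab$ only merges (and possibly reduces) external edges from $V(P)$. Next, given any list assignment $L$ on $V(P)$ with $|L(v)| \geq 4 - (d_G(v) - d_P(v))$, I would check the hypothesis of the previous lemma applied inside $H^{**}$: a vertex $v$ with no neighbour in $\{a,b\}$ (hence not on the boundary of $V(P)$ in $H^{**}$) has $|L(v)| \geq 4$, and a vertex $v$ adjacent to at least one of $a, b$ (hence on the boundary) has $|L(v)| \geq 2$. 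This is exactly ``$2$ colours on the boundary, $4$ elsewhere'', so the previous lemma yields an $L$-colouring of $V(P)$, proving $P$ is $4$-deletable.

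The main point worth pausing over is the tight case $v \in N_G(a) \cap N_G(b)$, where $|L(v)|$ is only guaranteed to be $2$. This works out cleanly precisely because contracting $ab$ identifies $a$ and $b$ into one vertex: $v$ then has only a single external neighbour $x$ in $H^{**}$, matching the ``$2$ colours on boundary'' bound of the previous lemma. Without the contraction, one would naively need to avoid two colours (those of $a$ and $b$), which exceeds what the list sizes allow; the hypothesis that adding $ab$ to $G$ creates no $K_4$ minor is exactly what permits this contraction while remaining in the $K_4$-minor-free world.
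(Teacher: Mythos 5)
Your proposal is correct and is essentially the paper's own argument: the paper proves this corollary precisely by adding the edge $ab$ (permitted since it creates no $K_4$ minor) and contracting it, thereby reducing to the preceding lemma on $C$-pockets with coboundary of size $1$. Your additional bookkeeping on list sizes (in particular the tight case of a vertex adjacent to both $a$ and $b$, which after contraction sees only the single boundary vertex) just fills in details the paper leaves implicit.
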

This can be seen by adding the edge $ab$ and contracting it. We recall an elementary fact about the edge density of $K_{4}$-minor-free graphs. 

\begin{thm}[\cite{diestel}]
Every graph $G$ with $v(G) \geq 2$ and with no $K_{4}$ minor has $e(G) \leq 2v(G)-3$. 
\end{thm}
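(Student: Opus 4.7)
The plan is to induct on $v(G)$, using the $2$-degeneracy of $K_{4}$-minor-free graphs: every such graph with at least $2$ vertices contains a vertex of degree at most $2$. The base case $v(G)=2$ is immediate since $2v(G)-3 = 1$ bounds the number of possible edges. If $G$ is disconnected, I would sum the bound over components of size at least two (isolated vertices contribute nothing to either side) to reduce to the connected case, so assume $G$ is connected with $v(G) \geq 3$.

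For the inductive step, I would find a vertex $v$ of degree at most $2$ using the treewidth machinery already invoked in the paper. Since $G$ is $K_{4}$-minor-free, $G$ has treewidth at most $2$, and by the cited smooth-treewidth theorem admits a $2$-smooth tree decomposition $(T,\beta)$ in which every bag has exactly three vertices and every pair of adjacent bags shares exactly two. If $|V(T)| = 1$ then $v(G) = 3$ and every vertex trivially has degree at most $2$. Otherwise, pick any leaf bag $B$ of $T$ with parent $B'$; by $2$-smoothness, there is a unique vertex $v \in V(B)\setminus V(B')$, and by the connectivity axiom of tree decompositions $v$ lies in no bag other than $B$. Hence every neighbor of $v$ in $G$ belongs to $V(B)\setminus\{v\}$, so $d_{G}(v) \leq 2$.

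Since $K_{4}$-minor-freeness is preserved under vertex deletion, the inductive hypothesis applies to $G-v$, yielding
\[
e(G) \;=\; e(G-v) + d_{G}(v) \;\leq\; \bigl(2(v(G)-1)-3\bigr) + 2 \;=\; 2v(G)-3,
\]
as required. There is no substantive obstacle here; the main point is that the treewidth machinery already in the paper supplies $2$-degeneracy essentially for free, after which the induction collapses in one line. An equally clean alternative would be to root a $2$-smooth tree decomposition, charge each edge of $G$ to the unique bag closest to the root containing both of its endpoints (at most $3$ edges for the root bag and at most $2$ edges for each of the remaining $v(G)-3$ bags), and sum to obtain the same bound $3 + 2(v(G)-3) = 2v(G)-3$ directly.
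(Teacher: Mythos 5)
Your proof is correct. Note that the paper does not actually prove this statement at all: it is quoted as a known fact with a citation to Diestel's textbook, so there is no in-paper argument to compare against. Your route — extracting $2$-degeneracy from a $2$-smooth tree decomposition (leaf bag introduces a unique vertex of degree at most $2$) and then running a one-line induction — is sound, and it has the virtue of using only machinery the paper already invokes (treewidth at most $2$ for $K_4$-minor-free graphs and the smooth tree decomposition theorem). The handling of the base case, the disconnected case, and the single-bag case all check out, and the inductive step $e(G) = e(G-v) + d_G(v) \leq 2(v(G)-1)-3+2$ is fine since $v(G-v) \geq 2$. Your alternative charging argument is also valid: a $2$-smooth decomposition of a graph on $v(G) \geq 3$ vertices has exactly $v(G)-2$ bags (the paper itself uses this count in its $K_4$ section), each non-root bag can be charged only the at most two edges joining its newly introduced vertex to the other two bag vertices, and the root contributes at most three, giving $3 + 2(v(G)-3) = 2v(G)-3$. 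For comparison, the textbook proof cited by the paper instead analyzes edge-maximal $K_4$-minor-free (series-parallel) graphs, showing they are built by pasting triangles along edges, which yields the same bound with equality for the extremal graphs; your argument is marginally less structural but fits the paper's toolkit more directly.
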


We are now ready to prove the main lemma of this subsection:

\begin{lemma}\label{K4ClutchExist}
	Every $K_4$-minor-free graph contains a $(C,4,12)$-wallet.
\end{lemma}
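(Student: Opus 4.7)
The plan is to apply Theorem~\ref{K4minorfree} iteratively, extracting from its proof the specific form of the $4$-deletable subgraphs it produces, so as to control both their size and their neighborhoods in $G$.

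First, I would inspect the proof of Theorem~\ref{K4minorfree} to observe that every $4$-deletable subgraph produced there is one of two kinds: (a) a single vertex $v$ with $d_G(v) \leq 2$, originating from a leaf bag of a $2$-smooth tree decomposition via Claim~\ref{leafX}; or (b) the four vertices $\{w,x,y,z\}$ inducing a $K_4-e$, obtained from a path of six consecutive degree-$2$ tree nodes via Claim~\ref{pathX}. In case (b), the proof of Claim~\ref{pathX} explicitly bounds each of the four vertices' degrees in $G$ by $4$, with at most $6$ total neighbors outside the pocket. In either case the resulting induced subgraph $P$ is a $C$-pocket of $G$ (provided $C \geq 4$), and its closed neighborhood $V(P) \cup N_G(V(P))$ contains at most $10$ vertices.

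Then I would construct $\mathcal{H}$ greedily. Start with $X_0 := \emptyset$ and $\mathcal{H}_0 := \emptyset$. At step $i \geq 1$, as long as $v(G) > 11\abs{X_{i-1}}$, the contrapositive of Theorem~\ref{K4minorfree} produces a $4$-deletable pocket $P_i$ of one of the two forms above, disjoint from $X_{i-1}$. Add it to $\mathcal{H}$ and set $X_i := X_{i-1} \cup V(P_i) \cup N_G(V(P_i))$. For any $j > i$, $V(P_j) \cap X_i = \emptyset$ forces $V(P_j)$ to be disjoint from $V(P_i)$ and to have no edges to $V(P_i)$, so the resulting pockets are pairwise non-touching. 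Since $\abs{X_i} - \abs{X_{i-1}} \leq 10$, the iteration proceeds for at least $\lceil v(G)/110 \rceil$ steps.

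Finally, by choosing $\epsilon$ in Lemma~\ref{initialPockets} small enough that the resulting constant $C$ satisfies $C \geq 55$, one obtains $\abs{\mathcal{H}} \geq v(G)/110 \geq v(G)/(2C)$. Since each $P_i$ is $4$-deletable, $\mathcal{H}$ is in fact a deletable $(C,4)$-wallet and a fortiori a $(C,4,12)$-wallet. The main obstacle is the first step: Theorem~\ref{K4minorfree} only asserts the existence of \emph{some} $4$-deletable subgraph disjoint from $X$, so one must inspect its proof to verify both that such a subgraph can be taken to be a $C$-pocket and that its neighborhood in $G$ is bounded by an absolute constant, since this uniform bound on the per-iteration growth of $\abs{X_i}$ is precisely what yields a linear number of pockets.
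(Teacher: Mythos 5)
Your proposal is correct, but it takes a genuinely different route from the paper. The paper first applies Lemma~\ref{initialPockets} to cover all but an $\epsilon$-fraction of $V(G)$ by non-touching $C$-pockets, and then bounds the number of vertices lying in ``bad'' pockets (those neither $4$-deletable nor $12$-deep), classified as Type~$2$ and Type~$k$, $k\ge 3$, via auxiliary contracted graphs, acyclicity in the Type~$2$ case and the edge bound $e\le 2v-3$ for $K_4$-minor-free graphs otherwise; the surviving pockets, some of which are only $12$-deep rather than deletable, form the wallet. You instead bypass Lemma~\ref{initialPockets} and the depth alternative altogether: rerunning the proof of Theorem~\ref{K4minorfree}, you observe that the only $4$-deletable subgraphs it ever invokes are a single vertex of small degree (via Claim~\ref{3deg} this should be degree at most $3$, not $2$ as you wrote, though your bound of $10$ on the closed neighbourhood still holds) or an induced $K_4-e$ whose four vertices have degree at most $4$ in $G$, and you harvest such constant-size pockets greedily while blocking their closed neighbourhoods. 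This extraction is legitimate---the hypothesis ``no $4$-deletable subgraph disjoint from $X$'' is used only in those two places, so the same proof shows that $v(G)>11|X|$ forces a deletable subgraph of one of these two bounded forms disjoint from $X$---and it directly yields a deletable $(55,4)$-wallet with at least $v(G)/110$ members, which is in fact stronger than what the paper establishes at this stage (the paper upgrades to deletable wallets only later, in Lemma~\ref{lem:manageableExist} and Theorem~\ref{thm:manageableExist}). Two cosmetic repairs: you cannot literally force $C\ge 55$ by shrinking $\epsilon$ in Lemma~\ref{initialPockets}, since that lemma gives no monotone control of $C$, but this is immaterial because its conclusion and all wallet conditions are monotone in $C$, so one may simply replace $C$ by $\max\{C,55\}$ (the downstream statements only ask for some constant); and the strengthened form of Theorem~\ref{K4minorfree} should be stated explicitly, even though its proof is verbatim the paper's. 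Finally, note this shortcut is specific to $t=4$: the deletable subgraphs arising for $K_5$-minor-free graphs (planar bags minus their boundary, or those from Theorem~\ref{Lukestheorem}) and for large $t$ do not come with such uniform size and degree bounds, which is why the paper's pocket-classification strategy with the ``deep'' alternative is the one that generalizes.
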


\begin{proof}
Let $G$ be a $K_{4}$-minor-free graph. We begin by applying Lemma \ref{initialPockets} with $\epsilon<\frac{1}{120}$ to obtain a set $X \subseteq V(G)$ with $|X| \leq \epsilon v(G)$ such that the components of $G-X$ are $C$-pockets. Let $\mathcal{H}$ be the set of components in $G-X$. Then $\mathcal{H}$ is a set of non-touching $C$-pockets, and we aim to extract a $(C,4,12)$-wallet from $\mathcal{H}$. Thus we focus on the elements of $\mathcal{H}$ which are not acceptable for a $(C,4,12)$-wallet. We classify the $C$-pockets of $\mathcal{H}$ which cannot belong to a $(C,4,12)$-wallet in the following way: 
\begin{itemize}
	\item Type $2$: Those with coboundary of size $2$ that are not $12$-deep and where adding an edge between the vertices in the coboundary would create a $K_4$ minor.
	\item Type $k$ for $k\geq 3$: Those with coboundary of size $k$ which are not $12$-deep.
\end{itemize}

We aim to bound the number of vertices that can lie in these pockets. We first deal with Type 2 pockets. Note $C$-pockets of Type 2 have size at most $23$ since they are not $12$-deep. We claim there is at most $23\epsilon v(G)$ vertices which lie in Type $2$ pockets. To see this, consider the graph $A$ obtained from $G$ by deleting all vertices except those in elements of $\mathcal{H}$ of Type 2 and their coboundary and then contracting each element of $\mathcal{H}$ of Type 2 to a single vertex. Then $A$ is acyclic. Otherwise, there exists a Type $2$ $C$-pocket $P$ with coboundary vertices $u,v$ such that there is a path from $u$ to $v$ in $G-P$. Thus contracting this path adds the edge $uv$, and since $G + uv$ contains a $K_{4}$ minor by definition of Type $2$, we get a contradiction. Thus there are at most $\abs{X}-1< \epsilon v(G)$ elements of $\mathcal{H}$ of Type $2$ for a maximum of $23\epsilon v(G)$ vertices in such pockets.

We now similarly bound the number of vertices in Type $k$ pockets for $k \geq 3$. Observe that there are fewer than $12k$ vertices in a $C$-pocket of Type $k$, as they are not 12-deep. As above, consider the graph $A$ obtained by deleting all vertices except those that lie in pockets of Type $k$, for all $k \geq 3$ and their coboundaries, and then contracting all pockets to a single vertex. Note that $A$ is a minor of $G$ and thus has no $K_4$ minor. Thus $e(A)\leq 2v(A)-3$. For every $k\geq 3$, let $n_k$ be the number of elements of $\mathcal{H}$ of Type $k$. Then we have $e(A)\geq \sum_{k\geq 3}kn_k\geq 3\sum_{k\geq 3} n_k$ and $v(A)\leq \abs{X}+\sum_{k\geq 3} n_k$. Thus $3\sum_{k\geq 3}n_k\leq 2\abs{X}-3+2\sum_{k\geq 3} n_k$ and so $\sum_{k\geq 3}n_k\leq 2\abs{X}-3$. Hence $v(A)\leq 3\abs{X}-3$. Applying the bound on the number of edges in a $K_{4}$-minor-free graph again, we have $\sum_{k\geq 3}kn_k\leq 6\abs{X}-9$. Putting this all together, the total number of vertices in such pockets is at most $12\sum_{k\geq 3}kn_k<108\epsilon v(G)$.

Combining the previous two paragraphs, we have that the number of vertices in elements of $\mathcal{H}$ that cannot occur in a $(C,4,12)$-wallet is at most $131\epsilon  v(G)$. Therefore let $\mathcal{H}' \subseteq \mathcal{H}$ be the subset of $\mathcal{H}$ that excludes Type $2$ and Type $k$ wallets. Observe that as $|X| \leq \epsilon   v(G)$, and the number of vertices in Type $2$ and Type $k$ wallets is at most $131\epsilon v(G)$ the number of vertices in elements of $\mathcal{H}'$ is at least $\frac{v(G)}{2}$. As each component of $\mathcal{H}$, and hence also $\mathcal{H}'$ has size at most $C$, the number of components in $\mathcal{H}'$ is at least $\frac{v(G)}{2C}$, and thus $\mathcal{H}'$ forms a $(C,4,12)$-wallet, as desired. 
\end{proof}

\subsection{$(C,5,d)$-Wallets in $K_5$-Minor-Free Graphs}
For this subsection, let $C$ be a constant given from Lemma \ref{initialPockets} when applied to the family of $K_{5}$-minor-free graphs. Throughout, $d$ will be the constant from Theorem \ref{K5minorfree}. We will prove that $K_{5}$-minor-free graphs have $(C,5,d)$-wallets, following the same outline as done for $K_{4}$-minor-free graphs.  We first state some corollaries of Theorem \ref{K5choosability} which shows that certain classes of $C$-pockets with small coboundary are $5$-deletable.

\begin{cor}
\label{easycorollaryK5}
Let $G$ be a $K_{5}$-minor-free graph with a $C$-pocket $H$. Suppose the coboundary of $H$ contains vertices $a,b,c$, where $a,b,c$ are not necessarily distinct vertices. If the graph $G'$ obtained by adding all possible edges on $\{a,b,c\}$ is $K_{5}$-minor-free, then $H$ is $5$-deletable.  
\end{cor}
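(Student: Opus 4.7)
The plan is to reduce to Theorem~\ref{K5choosability} by building a $K_5$-minor-free auxiliary graph $G''$ in which $\{a,b,c\}$ appears as a precoloured clique, and then arguing that any list-colouring of $G''$ restricts to a valid $L$-colouring of $H$.

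Fix a list assignment $L$ on $V(H)$ satisfying $|L(v)| \geq 5 - (d_G(v) - d_H(v))$ for every $v \in V(H)$; we want to show $H$ is $L$-colourable. Set $G'' := G'[V(H)\cup\{a,b,c\}]$, where $G'$ is the $K_5$-minor-free supergraph from the hypothesis. Since $G'$ is $K_5$-minor-free, so is $G''$, and the distinct vertices among $\{a,b,c\}$ form a clique of size at most $3$ in $G''$. The key observation is that, since the coboundary of $H$ is contained in $\{a,b,c\}$, the external neighbours of any $v\in V(H)$ in $G$ all lie in $\{a,b,c\}$, so $d_G(v) - d_H(v)$ equals the number of neighbours of $v$ in the set $\{a,b,c\}$.

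Now I would extend $L$ to a list assignment $L'$ on $V(G'')$ as follows. Pick three pairwise distinct ``fresh'' colours $\alpha,\beta,\gamma$ not appearing in $\bigcup_{v\in V(H)} L(v)$, and set $L'(a)=\{\alpha\}$, $L'(b)=\{\beta\}$, $L'(c)=\{\gamma\}$ (if some of $a,b,c$ coincide, simply use fewer distinct fresh colours). For each $v\in V(H)$, define
\[
L'(v) = L(v) \;\cup\; \{\text{the assigned colour of each neighbour of } v \text{ in } \{a,b,c\}\}.
\]
Since the added colours are fresh, they are all outside $L(v)$, and their number equals $d_G(v)-d_H(v)$, so $|L'(v)| \geq |L(v)| + (d_G(v) - d_H(v)) \geq 5$. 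The distinct vertices of $\{a,b,c\}$ have distinct singleton lists in $L'$ and form a clique in $G''$, so Theorem~\ref{K5choosability} applies to $G''$ with this precoloured clique, yielding an $L'$-colouring $\phi$ of $G''$.

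Finally, I claim $\phi|_{V(H)}$ is an $L$-colouring of $H$. Properness is inherited from $\phi$. For membership in $L$, suppose for contradiction that some $v\in V(H)$ receives a colour $\phi(v)\notin L(v)$; then by construction $\phi(v)$ is the assigned colour of some neighbour of $v$ in $\{a,b,c\}$, contradicting that $\phi$ is proper on the edge joining $v$ to that neighbour. Hence $\phi(v)\in L(v)$ for every $v\in V(H)$, as required, showing $H$ is $5$-deletable.

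The only subtle point — really the one thing to get right — is the choice of fresh colours for $\alpha,\beta,\gamma$ and the corresponding enlargement of $L(v)$: this is what ensures both that $|L'(v)|\geq 5$ and that any ``extra'' colour which ends up used on a vertex $v$ would necessarily conflict with its coboundary neighbour, automatically forcing $\phi(v)\in L(v)$. The rest is a direct application of Theorem~\ref{K5choosability} to $G''$.
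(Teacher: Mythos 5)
Your proof is correct and follows essentially the same route as the paper: both reduce to Theorem~\ref{K5choosability} applied to the pocket together with its (at most three) coboundary vertices, viewed as a precoloured clique in the edge-augmented $K_5$-minor-free graph, after padding each interior list by the colours of its coboundary neighbours so that all interior lists reach size $5$. Your use of fresh colours on $\{a,b,c\}$ instead of the paper's colouring $f$ of $G-H$ is a minor (and arguably cleaner) technical variant of the same argument, since it makes both the distinct-lists hypothesis and the final membership check $\phi(v)\in L(v)$ immediate.
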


\begin{proof}

Let $L$ be any list assignment of $H$ such that all vertices $v$ receive at least $5- |N(v) \cap \{a,b,c\}|$ colours. By Theorem \ref{K5choosability} $G-H$ admits an $L$-colouring $f$. 
Consider the $G' = G[V(H) \cup \{a,b,c\}]$ and extend $L$ to a list assignment $L'$ of $G'$ by giving $a$ colour $f(a)$, $b$ colour $f(b)$, $c$ colour $f(c)$, and adding $f(a),f(b),f(c)$ to any neighbour of $a,b$ or $c$ respectively. Then by Theorem \ref{K5choosability} $G'$ is $L'$-colourable, and thus $H$ is $L$-colourable, which implies that $H$ is $5$-deletable.
\end{proof}

Before proceeding to find a $(C,5,d)$-wallet, we note a useful theorem of Chen and Zhang.

\begin{thm}[\cite{bipartiteK5}]
\label{bipartiteK5bound}
If $G$ is a bipartite $K_{5}$-minor-free graph with at least four vertices, then $e(G) \leq 3v(G) -9$. 
\end{thm}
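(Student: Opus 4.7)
The plan is to prove the bound by induction on $v(G)$, reducing via small vertex cuts to the $3$-connected case, where Wagner's structural theorem applies directly.

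First I would handle the small base cases (say $v(G)\in\{4,5\}$) by direct inspection. For the inductive step, I reduce to minimum degree at least $3$: if some vertex $v$ satisfies $d_G(v)\le 2$, then $G-v$ is bipartite and $K_5$-minor-free, and (after dispatching the trivial base case) the inductive hypothesis gives $e(G-v)\le 3v(G-v)-9$, so adding back the at most $2$ edges incident to $v$ yields $e(G)\le 3v(G)-10<3v(G)-9$. Next I reduce to $3$-connectivity: if $G$ admits a $k$-separation $(G_1,G_2)$ with $k\in\{1,2\}$, then both $G_1$ and $G_2$ are bipartite and $K_5$-minor-free, and provided each has at least four vertices, induction gives $e(G_i)\le 3v(G_i)-9$. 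Using $v(G)=v(G_1)+v(G_2)-k$ one obtains
\begin{equation*}
e(G) \le e(G_1)+e(G_2) \le 3v(G)+3k-18 \le 3v(G)-12.
\end{equation*}
If some $G_i$ has fewer than four vertices, its edge count is bounded trivially (at most $2$ for a bipartite graph on $3$ vertices), and the same arithmetic still closes the induction.

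It remains to handle the case where $G$ is $3$-connected. Here I invoke Wagner's Theorem: every $3$-connected $K_5$-minor-free graph is either planar or isomorphic to the Wagner graph $V_8$. But $V_8$ is not bipartite — the chord $v_1v_5$ creates the odd cycle $v_1v_2v_3v_4v_5v_1$ — so $G$ must be planar. The standard Euler-formula bound for bipartite planar graphs then gives $e(G)\le 2v(G)-4$, which is at most $3v(G)-9$ as soon as $v(G)\ge 5$.

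The main obstacle is the $2$-cut reduction. The usual trick for extremal-minor arguments is to add the separator edge $uv$ to each $G_i$ and recurse on $G_i+uv$; this fails here because whenever $u$ and $v$ lie in the same part of the bipartition, adding $uv$ destroys bipartiteness, so the inductive hypothesis cannot be applied to $G_i+uv$. The resolution is that one does not need the extra edge: applying induction directly to the bipartite subgraph $G_i$ already yields a bound strong enough to close the induction, as the computation above shows. A secondary care point is ensuring no small $G_i$ case slips through the argument, which is handled by explicit checking.
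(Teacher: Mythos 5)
The paper does not prove this statement at all---it is quoted from the cited reference of Chen and Zhang---so there is no internal proof to compare with; your proposal must stand on its own, and it has a fatal gap. The step ``every $3$-connected $K_5$-minor-free graph is either planar or $V_8$'' is not Wagner's Theorem and is false: $K_{3,3}$ is $3$-connected, bipartite, non-planar, not $V_8$, and has no $K_5$ minor (any five pairwise adjacent branch sets could include at most one set avoiding the side of size $3$, so four sets would need to meet only three vertices). Wagner's Theorem only yields a decomposition into planar graphs and $V_8$ via clique-sums of order at most $3$; planarity of the pieces is guaranteed only under $4$-connectivity. The failure is not cosmetic: the graphs $K_{3,n}$ are exactly the extremal examples attaining $e=3v-9$, they are $3$-connected and non-planar, and your final case would bound them by $2v-4$, which $K_{3,3}$ already violates ($9>8$). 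Repairing the argument forces you to reduce only to $4$-connectivity and handle $3$-separations, where the arithmetic has no slack: a $3$-cut gives $e(G)\le 3(v(G_1)+v(G_2))-18=3v(G)-9$ exactly, so the small-side and equality cases (which is where the extremal graphs live) need genuine additional analysis rather than the ``strictly better than needed'' bookkeeping your $1$- and $2$-cut cases rely on. This is essentially why a separate reference is being invoked rather than a one-paragraph Euler-formula argument.

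A secondary point: your base case $v(G)=4$ cannot be verified ``by direct inspection'' because the statement as printed is false there---$K_{2,2}$ is bipartite, planar (hence $K_5$-minor-free), and has $4>3\cdot 4-9$ edges. The cited result should be read as requiring at least five vertices (note the paper only ever applies it to a graph $A$ with at least $5$ vertices), so your induction should start at $v=5$, and the places where you invoke the hypothesis on a side with exactly four vertices need the trivial bound $e\le 4$ rather than $3v-9$; your arithmetic survives that substitution, but the main gap above remains.
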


\begin{thm}
\label{K5ClutchExist}
Every $K_5$-free graph contains a $(C,5,d)$-wallet.
\end{thm}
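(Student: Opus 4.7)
The plan is to follow the template of Lemma \ref{K4ClutchExist}: apply Lemma \ref{initialPockets} to decompose $G$ into $C$-pockets after removing a set $X$ with $|X| \leq \epsilon v(G)$, then discard those pockets that are neither $5$-deletable nor $d$-deep, and show the surviving pockets still cover at least $v(G)/2$ vertices. The main technical work is bounding the number of vertices in the ``bad'' pockets.

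By Corollary \ref{easycorollaryK5}, a $C$-pocket with coboundary of size at most $3$ is $5$-deletable whenever the completion of its coboundary to a clique in $G$ preserves $K_5$-minor-freeness. Hence the bad pockets fall into three types: Type $2$, with coboundary $\{a,b\}$ such that $G + ab$ has a $K_5$ minor and the pocket is not $d$-deep; Type $3$, with coboundary $\{a,b,c\}$ such that $G + \{ab,ac,bc\}$ has a $K_5$ minor and the pocket is not $d$-deep; and Type $k$ for $k \geq 4$, with coboundary of size $k$ and the pocket not $d$-deep. Since each bad pocket is not $d$-deep, it contains fewer than $kd$ vertices when its coboundary has size $k$.

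For Type $2$ pockets, I would adapt the acyclicity argument from Lemma \ref{K4ClutchExist} verbatim: a cycle in the multigraph obtained by contracting each Type $2$ pocket to a single vertex could be further contracted to realize the missing edge between the coboundary vertices of some Type $2$ pocket, producing a $K_5$ minor of $G$, a contradiction. This gives at most $|X|$ Type $2$ pockets, and hence $O(d \epsilon v(G))$ Type $2$ vertices. For Type $k$ with $k \geq 4$, I would contract each such pocket and apply the known $K_5$-minor-free edge-density bound $e \leq 3v-6$ to the resulting minor, obtaining an inequality of the form $\sum_{k \geq 4}(k-3)n_k \leq 3|X| - 6$ and hence a total Type $k$ ($k\ge 4$) vertex count of $O(d \epsilon v(G))$.

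The main obstacle will be Type $3$ pockets, since each contracted Type $3$ pocket contributes only $3$ edges and the standard edge-density bound yields no useful information. To handle these, I would combine the sharper bipartite bound of Theorem \ref{bipartiteK5bound} with a structural argument ruling out too much overlap of coboundary triples: two Type $3$ pockets with the same coboundary $\{a,b,c\}$ can be contracted respectively into $a$ and $b$ to realize all three edges $\{ab,ac,bc\}$ simultaneously as minor operations, which combined with the Type $3$ property of either pocket forces a $K_5$ minor in $G$ after a careful analysis of where the branch sets of the putative $K_5$ land relative to the two pockets. A careful extension of this argument controls how Type $3$ coboundary triples may share pairs of vertices, enabling an application of Theorem \ref{bipartiteK5bound} to the bipartite graph between contracted Type $3$ pocket-vertices and $X$ to yield $n_3 = O(|X|)$. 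Finally, choosing $\epsilon$ small enough that $|X|$ and the vertices of all bad pockets together account for at most $v(G)/2$ vertices, the remaining pockets form a $(C,5,d)$-wallet of size at least $v(G)/(2C)$, as required.
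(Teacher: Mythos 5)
Your overall framework matches the paper: apply Lemma~\ref{initialPockets}, classify the unusable pockets into Type $2$, Type $3$ and Type $k$ for $k\ge 4$ (justified by Corollary~\ref{easycorollaryK5}), handle Type $2$ by the acyclicity argument and Type $k\ge 4$ by an edge-density bound, then choose $\epsilon$ small. The gap is exactly in the Type $3$ case, which is where the paper does its real work. First, your structural claim that two Type $3$ pockets cannot share a coboundary triple is false. Take $G=K_{3,3}$ with parts $\{a,b,c\}$ and $\{d,e,p\}$, plus the edge $de$, and $X=\{a,b,c\}$. The components $\{d,e\}$ and $\{p\}$ of $G-X$ both have coboundary $\{a,b,c\}$, and adding the triangle on $\{a,b,c\}$ (indeed, adding any single one of those edges) creates a $K_5$ minor, so both are Type $3$ under the definition you use; yet $G$ itself is $K_5$-minor-free (it has $6$ vertices, $10$ edges, clique number $3$, and contracting any single edge destroys at least one edge plus one edge per common neighbour, so no $5$-vertex contraction reaches $10$ edges). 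The point is that the $K_5$ minor of $G+\{ab,ac,bc\}$ may live inside one of the pockets, and the other pocket, being a single connected piece attached to $a,b,c$, cannot simulate the whole triangle; your ``careful analysis'' cannot succeed because the conclusion it aims for is simply not true. Second, and more fundamentally, even with perfect control of how coboundary triples overlap, Theorem~\ref{bipartiteK5bound} cannot yield $n_3=O(|X|)$: each contracted Type $3$ pocket contributes one vertex and exactly three edges, so $e\le 3v-9$ reads $3n_3\le 3(|X|+n_3)-9$, which is vacuous; and $K_{3,n}$ shows a $K_5$-minor-free bipartite graph really can have arbitrarily many degree-$3$ vertices on one side against a bounded other side, so no counting of this shape can close the argument.

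The paper takes a different and essentially unavoidable route here. From the Type $3$ property it extracts an extra structural condition on the contracted bipartite graph $A$: for each contracted pocket vertex $h$ with neighbours $a,b,c$, there is no rooted $K_3$ minor of $A-h$ with roots $a,b,c$ (such a rooted minor would let one realize the triangle by contractions disjoint from the pocket and manufacture a $K_5$ minor in $G$). It then proves a bespoke density lemma: any bipartite graph in which every vertex of the pocket side has degree at most $3$, and for every such vertex $v$ every cycle of $M-v$ is separated from $N(v)$ by a cut of size at most $2$, has at most $2v(M)-4$ edges; the proof is a delicate induction on $1$- and $2$-separations. It is this improvement from $3v-9$ to $2v-4$ that converts ``three edges per pocket'' into $3n_3\le 2(|X|+n_3)-4$, i.e.\ $n_3\le 2|X|-4$. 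To repair your proof you would need a statement of comparable strength about the hypergraph of Type $3$ coboundary triples; excluding repeated (or pairwise heavily overlapping) triples is neither true in the generality you need nor sufficient for the count.
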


\begin{proof}
Let $G$ be a $K_{5}$-minor-free graph. Apply Lemma \ref{initialPockets} with $\epsilon < \frac{1}{18d +19}$ to find a set $X$ such that $|X| \leq \epsilon v(G)$ and the components $\mathcal{H}$ of $G-X$ are $C$-pockets. We again will extract a $(C,5,d)$-wallet from $\mathcal{H}$. We aim to bound the number of vertices in components of $\mathcal{H}$ which cannot belong in a $(C,5,d)$-wallet.   Once again, we divide the components of $\mathcal{H}$ not suitable for a wallet into types (and our classification is complete by Corollary \ref{easycorollaryK5}). 
\begin{itemize}
	\item Type $2$: Those with coboundary of size 2 that are not $d$-deep and where adding an edge between the vertices in the coboundary would create a $K_5$ minor.
	\item Type $3$: Those with coboundary of size 3 that are not $d$-deep and where adding an edge between each pair of vertices in the coboundary would create a $K_5$ minor.
	\item Type $k$ for $k\geq 4$: Those with coboundary of size $k$ which are not $d$-deep.
\end{itemize}

For pockets of Type 2, the argument is the same as in the $K_4$-minor-free case and thus there are at most $(2d+1)\epsilon v(G)$ vertices in such pockets.

For pockets of Type $k$ for $k\geq 4$, we use a similar argument to the one used for $k\geq 3$ in the $K_4$-minor-free case. We construct the graph $A$ as follows. First contract each element of $\mathcal{H}$ of type at least $4$ down to a single vertex. Then delete all edges not incident to one of these vertices. Finally, delete all isolated vertices. Thus $A$ is a bipartite graph. It is a minor of $G$ and thus has no $K_5$ minor. If $A$ has any vertices, then observe it has at least $5$ such vertices. As $A$ is a bipartite graph and $K_{5}$-minor-free, by Theorem \ref{bipartiteK5bound} we have $e(A)\leq 3v(A)-9$. For every $k\geq 4$, let $n_k$ be the number of elements of $\mathcal{H}$ of type $k$. Then we have $e(A)\geq \sum_{k\geq 4}kn_k\geq 4\sum_{k\geq 4} n_k$ and $v(A)\leq \abs{X}+\sum_{k\geq 4} n_k$. Thus $4\sum_{k\geq 3}n_k\leq 3\abs{X}-9+3\sum_{k\geq 3} n_k$ and so $\sum_{k\geq 3}n_k\leq 3\abs{X}-9$. Hence $v(A)\leq 4\abs{X}-9$. Applying the bound on the number of edges again, we have $\sum_{k\geq 3}kn_k\leq 12\abs{X}-36$. Putting this all together, the total number of vertices in such pockets is at most $(d+1)\sum_{k\geq 4}kn_k<12(d+1)\epsilon v(G)$.

Now we deal with the hardest case, Type 3 pockets. Construct $A$ as in the previous paragraph, but only using pockets of Type 3. That is, contract each pocket of Type 3 down to a single vertex, delete all edges not incident to these vertices and then delete all isolated vertice. Let $h$ be a vertex of $A$ obtained by the contraction of a pocket. It has three neighbours, which we will call $a$, $b$ and $c$ such that there is no rooted $K_{3}$ minor of $A-h$ with roots $a$, $b$ and $c$; that is, a $K_{3}$ minor of $A-h$ such that $a,b,c$ lie in different contracted sets. Now we claim that $A$ has at most $2v(A)-3$ edges. In fact, we prove the following inductive claim.

\begin{claim}
	Let $M$ be a bipartite graph with bipartition $A,B$ such that every vertex in $A$ has degree at most 3. Suppose further that for all $v\in A$ and every cycle $Q$ contained in $M-v$, there is a 1 or 2-vertex-cut between $N(v)$ and $Q$. Then $M$ has at most $2v(M)-4$ edges.
\end{claim}
\begin{claimproof} Suppose not. Let $M$ be a minimum counterexample. Observe $M$ is connected since otherwise if $M$ has components $C_{1},\ldots,C_{t}$, by minimality $e(C_{i}) \leq 2v(C_{i})-4$, and thus it follows that $e(M) \leq 2v(M)-4$, as desired.  Now let $v \in A$. If $M-v$ is acyclic, then $M-v$ has at most $v(M)-2$ edges. Thus $e(M)\leq v(M)+1$ once we have added back in the edges incident to $v$. This is at most $2v(M)-4$ for $v(M)\geq 5$. For $3\leq v(M)\leq 4$, all bipartite graphs have at most $2v(M)-4$ edges.
	
	Thus suppose $M-v$ has a cycle $Q$. There is a 1 or 2-vertex-cut $\{a,b\}$ between $N(v)$ and $Q$. If possible, choose a 1-cut. Let $(X,Y)$ be the separation.  Both $M[X]$ and $M[Y]$ satisfy the assumptions in the claim. Thus $e(X)\leq 2v(X)-4$ and $e(Y)\leq 2v(Y)-4$.
	
	If $a=b$, then $v(M)= v(Y)+v(X)-1$ since the intersection of $X$ and $Y$ is just $a$. Furthermore, $e(M)= e(X)+e(Y)$ since every edge in $M$ is in either $M[X]$ or $M[Y]$. Putting this together, we have that \[e(M)= e(X)+e(Y)\leq 2v(X)-4+2v(Y)-4= 2\left(v(X)+v(Y)-1\right)-6<2v(M)-4.\]
	
	Therefore we may assume that $a \neq b$. We deal with cases depending on where $a$ and $b$ are in the bipartition. 
	If $a$ and $b$ lie in different parts of the bipartition, we construct $X'$ from $X$ and $Y'$ from $Y$ by adding an edge between $a$ and $b$ if it is not already present in $M$.
	As we could not take a single vertex cut, it follows that both $a$ and $b$ had at least one neighbour in $M-X$ and one in $M-Y$. Thus the graphs $X'$ and $Y'$ satisfy the degree conditions of the claim.  Furthermore, one can construct $X'$ (respectively $Y'$) by contracting the edges of a path from $a$ to $b$ in $Y$ (such a path exists or we can find a single vertex cut). Thus any new cycles created in $X'$ or $Y'$ correspond to cycles which were already in $M$. Similarly, 2-vertex cuts are preserved. Therefore in fact we can apply minimality to $X'$ and $Y'$. Now $v(M)=v(X')+v(Y')-2$ and $e(M)\leq e(X')+e(Y')-1$. Thus we have that
	\[e(M)\leq e(X')+e(Y')-1\leq 2v(X')-4+2v(Y')-4-1= 2\left(v(X')+v(Y')-2\right)-5<2v(M)-4.\]
	
	If $a\neq b$ and both are in $A$, we modify the cut as follows. By the assumptions, $a$ has at most 3 neighbours, and since we could not take a single vertex cut, without loss of generality $a$ has a unique neighbour $a'$ in $X$. Then $\{a',b\}$ is also a 2-vertex-cut. This two vertex cut is non-trivial since one side of the separation generated by $\{a,b\}$ contains a cycle (hence at least 4 vertices) and one side contains all the neighbours of $v\in A$. Thus we may instead consider the two vertex cut $\{a',b\}$, and since $a' \in B$, we may apply the argument from the previous paragraph to finish this case. 
	
	If $a\neq b$ and both are in part $B$, we replace $a$ by a path of length 2: $x,a',y$. We give $y$ all the neighbours of $a$ in $Y$ and $x$ all the neighbours of $a$ in $X$. Call the resulting graph $M'$. The only vertex we are adding to $A$ is $a'$. It has degree 2 and a 2-vertex cut $\{x,y\}$ between it and the rest of the graph. Thus $M'$ satisfies the conditions of the claim. Furthermore, we may replace $\{a,b\}$ with $\{a',b\}$ as our 2-cut, thus we may apply the argument to $M'$ where $a'$ and $b$ are in different sides of the bipartition. Note that $v(M')=v(M)+2$ and $e(M')=e(M)+2$. Thus if we want $e(M)\leq 2v(M)-4$, we need $e(M')\leq 2v(M')-6$. To obtain this, we will need to use the additional fact that there is no edge in $M'$ between $a'$ and $b$. Thus it had to be added to $X'$ and $Y'$ (the graphs constructed in that case). Hence $e(M')=e(X')+e(Y')-2$ and $v(M')=v(X')+v(Y')-2$. Thus, as required we have
	\[e(M')= e(X')+e(Y')-2\leq 2v(X')-4+2v(Y')-4-2= 2\left(v(X')+v(Y')-2\right)-6=2v(M')-6.\]
	
	Thus, in all cases, $M$ was not a counterexample to begin with. This is a contradiction so the claim holds.
\end{claimproof}

\vskip.1in

Now returning to our main proof, let $n$ be the number of pockets in $\mathcal{H}$ of type 3. The graph $A$ as constructed above has at most $\abs{X}+n$ vertices and precisely $3n$ edges. Thus $3n\leq 2\abs{X}+2n-4$ and so $n\leq 2\abs{X}-4<2\epsilon v(G)$. Each Type 3 pocket has size at most $3(d+1)$. Thus the number of vertices in such pockets is at most $6(d+1)\epsilon v(G)$.

Putting all this together, at most $18(d+1)\epsilon v(G)$ vertices lie in pockets of $\mathcal{H}$ which cannot belong to a $(C,5,d)$-wallet. As $|X| \leq \epsilon v(G)$, we have a total of $(12d + 13)\epsilon v(G)$ vertices in $G$ which are either in $X$ or pockets that are not suitable for a $(C,5,d)$-wallet.  Let $\mathcal{H}'$ be the subset of $\mathcal{H}$ which includes all pockets that are not Type $2$, $3$ or $k$ for $k \geq 4$. By our choice of $\epsilon$, after the removal of $X$ and all pockets not suitable for a $(C,5,d)$-wallet, there are still at least $\frac{v(G)}{2}$ vertices left over. Since each pocket of $\mathcal{H}'$ has size at most $C$, we have at least $\frac{1}{2C}v(G)$ pockets in $\mathcal{H}'$, and thus a $(C,5,d)$-wallet in $G$, as desired. 
\end{proof}

\subsection{$(C,13k,6k)$-Wallets in $K_t$-Minor-Free Graphs}
In this subsection, we prove the existence of wallets for general $t$. We need the following theorem of the second author and Norin~\cite{norinpostlelist}:

\begin{thm}[Theorem 3.2, \cite{norinpostlelist}]
\label{bipartiteedgeboundlarget}
There exists $C' > 0$ such that for every $t \geq 3$ and every bipartite graph $G$ with bipartition $(A, B)$ and no $K_t$ minor we have
\[e(G) \leq C't\sqrt{\log t}\sqrt{|A||B|} + (t - 2)v(G)\] 
\end{thm}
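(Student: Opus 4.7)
The plan is to combine a degeneracy peel-off argument (to account for the $(t-2)v(G)$ term) with the classical Kostochka--Thomason theorem applied to an auxiliary contracted graph. This two-stage strategy is natural because the $(t-2)v(G)$ term corresponds exactly to what is lost by deleting low-degree vertices, while the $\sqrt{|A||B|}$ factor must come from exploiting the bipartite structure.

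First I would iteratively delete vertices of degree at most $t-2$. Each such deletion removes at most $t-2$ edges and exactly one vertex, and preserves both bipartiteness and $K_t$-minor-freeness. These deletions contribute at most $(t-2) v(G)$ edges to the bound. After the process, the resulting subgraph $G_0$ with bipartition $(A_0, B_0)$ has minimum degree at least $t-1$ (or is empty, in which case we are done). It therefore suffices to prove $e(G_0) \leq C' t \sqrt{\log t} \sqrt{|A_0||B_0|}$.

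Next, assume without loss of generality that $a := |A_0| \leq |B_0| =: b$. The heart of the argument is to produce a minor $H$ of $G_0$ whose density is amplified enough that the Kostochka--Thomason theorem (which says every $K_t$-minor-free graph has average degree at most $O(t\sqrt{\log t})$) forces the desired bound. A first attempt is to contract a matching saturating $A_0$ (which exists via the minimum degree condition and Hall's theorem, perhaps after handling small exceptional vertex cuts separately), producing a graph $H$ on $b$ vertices with $e(H) \geq e(G_0) - a$. Applying Kostochka--Thomason to $H$ directly gives only $e(G_0) \leq O(t\sqrt{\log t}) \cdot b$, which is too weak when $a \ll b$.

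The main obstacle is to strengthen this to $O(t\sqrt{\log t})\sqrt{ab}$. My approach would be to iterate the contraction-plus-Kostochka--Thomason step $\Theta(\log(b/a))$ times, each time shrinking the larger side by contracting matchings, so that after the iterations the graph becomes balanced on roughly $\sqrt{ab}$ vertices while retaining most of the edges of $G_0$. An alternative tack is a probabilistic one: subsample a random set $B' \subseteq B_0$ of size $\sqrt{ab}$, apply a concentration inequality to show that the induced subgraph on $A_0 \cup B'$ is still dense enough, and then contract or directly invoke Kostochka--Thomason on this subgraph. The delicate point in either strategy is controlling the edges lost to contractions (which can create parallel edges one must then discard) versus the minor-freeness constraint; extremal examples such as $K_{t-2, n}$ show that the $\sqrt{|A||B|}$ factor is essentially tight, so any slack in the argument must be absorbed cleanly into the constant $C'$.
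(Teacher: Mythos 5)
This statement is not proved in the paper at all: it is quoted verbatim as Theorem~3.2 of the cited work of Norin and the second author \cite{norinpostlelist}, so there is no internal argument to compare yours against; your attempt has to stand on its own, and as written it does not. The peeling step is fine (iteratively deleting vertices of degree at most $t-2$ costs at most $(t-2)v(G)$ edges and preserves bipartiteness and $K_t$-minor-freeness), but everything after that is a plan rather than a proof, and the one step that actually carries the theorem --- obtaining the $t\sqrt{\log t}\,\sqrt{|A||B|}$ term rather than $t\sqrt{\log t}\,|B|$ --- is exactly the step you leave open. Neither of your two proposed fixes closes it. For the matching-contraction route: minimum degree $t-1$ on both sides does not give Hall's condition for $A_0$ (take many vertices of $A_0$ all attached to the same $t-1$ vertices of $B_0$), and your edge count $e(H)\ge e(G_0)-a$ is wrong, since contracting a matching in a bipartite graph creates parallel edges whenever $u v'$ and $u' v$ are both edges for matched pairs $uv$, $u'v'$, so the loss can be of order $a^2$, not $a$; the claim that iterated contractions can balance the graph to roughly $\sqrt{ab}$ vertices ``while retaining most of the edges'' is precisely the content that would need proof, and there is no mechanism in your sketch controlling these losses.

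The probabilistic route fails for a cleaner, quantitative reason. If you sample $B'\subseteq B_0$ with $|B'|=\sqrt{ab}$, the expected number of surviving edges is $e(G_0)\,|B'|/b$, while Kostochka--Thomason applied to the ($K_t$-minor-free) sampled graph only bounds its edges by $O\bigl(t\sqrt{\log t}\,(a+|B'|)\bigr)=O\bigl(t\sqrt{\log t}\,\sqrt{ab}\bigr)$. Combining these gives $e(G_0)=O\bigl(t\sqrt{\log t}\,(ab/|B'|+b)\bigr)=O\bigl(t\sqrt{\log t}\,b\bigr)$, i.e.\ exactly the linear-in-$b$ bound you already identified as too weak; sampling alone cannot beat it because the per-vertex Kostochka--Thomason bound on the sampled graph is dominated by the $B'$ side. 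The real difficulty of the theorem is that when $|B|\gg|A|$ the linear term must come with coefficient $t-2$, not $O(t\sqrt{\log t})$, and this requires an argument that exploits $K_t$-minor-freeness of the bipartite structure more delicately than either sketch does (your own extremal example $K_{t-2,n}$ already shows that a bound of the form $O(t\sqrt{\log t})\cdot v(G)$ is not what is being claimed). As it stands, the proposal is an outline with the decisive step missing, so it does not constitute a proof of the cited theorem.
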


\begin{thm}
Let $k = \max \{h(t), \lceil \frac{31}{2}(t-1) \rceil\}$, where as before $h(t)$ is the smallest value such that all $K_{t}$-minor-free graphs are $h(t)$-list-colourable. There exists a positive integer $t_{0}$ such that for all $t \geq t_{0}$,
every $K_{t}$-minor-free graph $G$ contains a $(C,13k,6k)$-wallet for some constant $C$.  
\end{thm}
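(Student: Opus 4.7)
The plan is to follow the template of Lemmas~\ref{K4ClutchExist} and~\ref{K5ClutchExist}: use Lemma~\ref{initialPockets} to obtain a small separator whose complement is a union of $C$-pockets, discard those pockets that are neither $13k$-deletable nor $6k$-deep, and show the discarded pockets account for only a small fraction of the vertices.

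To begin, I would choose a parameter $\epsilon$ depending on $t$ and apply Lemma~\ref{initialPockets} to the minor-closed family of $K_{t}$-minor-free graphs, obtaining $X \subseteq V(G)$ with $\abs{X} \leq \epsilon v(G)$ such that every component of $G-X$ is a $C$-pocket, where $C$ is a constant depending on $\epsilon$ and $t$. Let $\mathcal{P}$ denote the family of these pockets, and call $P \in \mathcal{P}$ \emph{bad} if it is neither $13k$-deletable nor $6k$-deep. The key structural observation is that if $\abs{Y_P} \leq 12k$, then $P$ is $13k$-deletable: each $v \in V(P)$ has $d_G(v) - d_P(v) \leq \abs{Y_P} \leq 12k$, so for any valid list assignment $L$ we have $\abs{L(v)} \geq 13k - 12k = k \geq h(t)$, and thus $P$ (being $K_t$-minor-free) is $L$-colourable. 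Consequently every bad pocket satisfies both $\abs{Y_P} > 12k$ and $v(P) < 6k \abs{Y_P}$ (the negation of being $6k$-deep, as bad pockets have nonempty coboundary).

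Next I would consider the bipartite auxiliary graph $A$ with parts $X$ and $\mathcal{B}$ (the set of bad pockets), with an edge between $x$ and $P$ whenever $x \in Y_P$. The graph $A$ is a minor of $G$ (obtained by contracting each bad pocket to a single vertex and discarding edges internal to $X$), hence $K_t$-minor-free. On one hand $e(A) = \sum_{P \in \mathcal{B}} \abs{Y_P} > 12k \abs{\mathcal{B}}$, while on the other Theorem~\ref{bipartiteedgeboundlarget} gives $e(A) \leq C' t \sqrt{\log t} \sqrt{\abs{X}\abs{\mathcal{B}}} + (t-2)(\abs{X} + \abs{\mathcal{B}})$. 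Since $k \geq \lceil 31(t-1)/2 \rceil$, the quantity $12k - (t-2)$ is of order $k$, and a quadratic estimate in $\sqrt{\abs{\mathcal{B}}}$ then yields $\abs{\mathcal{B}} \leq O(\log t) \cdot \abs{X}$. Combining with $v(P) < 6k \abs{Y_P}$, the total number of vertices in bad pockets is at most $6k \cdot e(A) \leq O(t^2 \log t) \cdot \abs{X}$.

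Finally, I would pick $\epsilon = \Theta(1/(t^2 \log t))$ (a function of $t$ only), so that the vertices in $X$ together with those in bad pockets total at most $v(G)/2$. Then the good pockets cover at least $v(G)/2$ vertices, and since each has at most $C$ vertices, their number is at least $v(G)/(2C)$. Being components of $G-X$, they are pairwise non-touching, and each is $13k$-deletable or $6k$-deep by construction, so they form the desired $(C, 13k, 6k)$-wallet. The main obstacle will be the algebraic manipulation extracting $\abs{\mathcal{B}} = O(\log t) \cdot \abs{X}$ from the interplay of the lower bound $12k \abs{\mathcal{B}}$ and Theorem~\ref{bipartiteedgeboundlarget}, where the specific quantitative hypothesis $k \geq \lceil 31(t-1)/2 \rceil$ is essential to ensure that $12k$ dominates the $(t-2)$-term in the bipartite edge bound.
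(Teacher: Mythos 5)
Your proposal is correct and follows essentially the same route as the paper: Lemma~\ref{initialPockets} to produce $X$ and the $C$-pockets, the observation that coboundary at most $12k$ forces $13k$-deletability via $k\geq h(t)$, the bipartite auxiliary graph on $X$ and the contracted bad pockets bounded by Theorem~\ref{bipartiteedgeboundlarget}, the shallow-pocket count via $v(P)<6k\abs{Y_P}$, and a choice of $\epsilon$ depending only on $t$. The only differences are cosmetic — you organize the algebra in terms of $\abs{X}$ rather than $\epsilon v(G)$ and your stated $\epsilon=\Theta(1/(t^{2}\log t))$ should really absorb a factor of $k$ (i.e.\ of $h(t)$, which is not known to be $O(t)$), but since the bound is in any case a function of $t$ alone this does not affect the argument.
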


\begin{proof}

Let $C$ be a constant given from Lemma \ref{initialPockets} when applied to the family of $K_{t}$-minor-free graphs. Let $\epsilon$ be a sufficiently small positive value to be determined later. Let $G$ be a $K_{t}$-minor-free graph, and $X \subseteq V(G)$ such that every component of $G-X$ is a $C$-pocket, and $|X| \leq \epsilon v(G)$. As before, let $\mathcal{H}$ be the collection of $C$-pockets in $G-X$.   Let $13k$ be the list sizes from Theorem \ref{K_tDeletability} and let $6k$ be the multiplicative constant from the same theorem. First of all, by definition of $h(t)$, every $K_t$-minor-free graph is $h(t)$-list-colourable. Thus, any pocket with coboundary at most $12h(t)$ is $13h(t)$-deletable.

For ease, let us say a bad pocket of $\mathcal{H}$ is a pocket which cannot belong in a $(C,13k,6k)$-wallet. As before, our goal is to bound the number of vertices in bad pockets. 

Consider the graph $A$ obtained by contracting all bad pockets down to a single vertex,  deleting all of the other pockets, and deleting the edges in $G[X]$. Thus $A$ is bipartite, and as it is a minor of $G$, $K_{t}$-minor-free. Then $X,Y$ is a bipartition of $A$ where $Y$ is the set of all vertices obtained by contracting bad pockets. By Theorem \ref{bipartiteedgeboundlarget}, we have 
\[e(A)\leq C't\sqrt{\log t}\sqrt{\abs{X}\abs{Y}}+(t-2)v(A).\]
Now, we note that every vertex in $Y$ has degree at least $12h(t)$. Also, we have picked $X$ such that $\abs{X}\leq \epsilon v(G)$.

Using these facts, we obtain:
\[12h(t)\abs{Y}\leq C't\sqrt{\log t}\sqrt{\epsilon v(G)\abs{Y}}+(t-2)(\epsilon v(G)+\abs{Y}).\]
Rearraging we get
\[(12h(t)-(t-2))\abs{Y}\leq C't\sqrt{\log t}\sqrt{\epsilon \cdot v(G)\abs{Y}}+(t-2)\cdot \epsilon \cdot v(G).\]
Finally using that $h(t) \geq t$, and using that $p + q \leq 2\max\{p,q\}$, we can simplify this to
\[11h(t)\abs{Y}\leq 2\max\{C't\sqrt{\log t}\sqrt{\epsilon \cdot v(G)\abs{Y}},(t-2)\cdot \epsilon \cdot v(G)\}.\]

Separating this into two inequalities, either we have
\[5.5h(t)\abs{Y}\leq (t-2)\cdot \epsilon \cdot v(G).\]
which simplifies to 
\[\abs{Y}\leq \frac{(t-2)}{5.5h(t)}\cdot \epsilon \cdot v(G),\]
or we have 
\[11h(t)\abs{Y}\leq 2C't\sqrt{\log t}\sqrt{\epsilon \cdot v(G)\abs{Y}}\]
which by dividing by $11h(t)\sqrt{|Y|}$ simplifies to 
\[\sqrt{\abs{Y}}\leq \frac{2C't\sqrt{\log t}\sqrt{\epsilon \cdot v(G)}}{11h(t)},\]
which using the loose inequality $(\frac{a}{b})^{p} \leq \frac{a^{p}}{b}$ gives:
\[\abs{Y}\leq \frac{4C'^2t^2\log t}{11h(t)}\cdot \epsilon \cdot v(G).\]

\begin{comment}

Now we can consider two separate inequalities:
\begin{align*}
    5.5h(t)\abs{Y}\leq (t-2)\epsilon \cdot v(G) && \text{or} && 11c(t)\abs{Y}\leq 2C't\sqrt{\log t}\sqrt{\epsilon \cdot v(G)\abs{Y}}\\
    \abs{Y}\leq \frac{(t-2)\epsilon \cdot v(G)}{5.5h(t)} && \text{or} && \sqrt{\abs{Y}}\leq \frac{2C't\sqrt{\log t}\sqrt{\epsilon \cdot v(G)}}{11h(t)}\\
    \abs{Y}\leq \frac{(t-2)}{5.5h(t)}\epsilon v(G) && \text{or} && \abs{Y}\leq \frac{4C'^2t^2\log t}{11h(t)}\cdot \epsilon \cdot v(G)
\end{align*}
\end{comment}
Thus it follows there exists a $t_{0}$ such that for all $t \geq t_{0}$ we have 
\[\abs{Y}\leq \frac{4C'^2t^2\log t}{11h(t)}\cdot \epsilon \cdot v(G).\]

 Now using this inequality and the inequality given by Theorem \ref{bipartiteedgeboundlarget}  we have that:
\begin{align*}
    e(A)&\leq C't\sqrt{\log t}\sqrt{\epsilon \cdot v(G)\cdot \frac{4C'^2t^2\log t}{11h(t)}\cdot \epsilon\cdot v(G)}+(t-2)\left(\epsilon v(G)+\frac{4C'^2t^2\log t}{11h(t)}\epsilon \cdot v(G)\right)\\
    &\leq C't\sqrt{\log t}\cdot 2C't \cdot \epsilon \cdot v(G)\sqrt{\frac{\log t}{11h(t)}}+(t-2)\frac{4C'^2t^2\log t+11h(t)}{11h(t)}\cdot \epsilon \cdot v(G)\\
    &\leq \epsilon \cdot v(G)\left(\frac{2C'^2t^2\log t}{\sqrt{11h(t)}}+(t-2)\frac{4C'^2t^2\log t+ 11h(t)}{11h(t)}\right).
\end{align*}
We will call the whole expression between parentheses $\kappa(t)$. It is independent of the size of the graph and depends only on $t$. This bound on the number of edges in $A$ implies a bound on the number of vertices in bad pockets since these are shallow. In particular, there are at most $6k\epsilon \cdot \kappa(t) \cdot v(G)$ vertices in bad pockets. If we choose $\epsilon$ small enough that $\epsilon (6k\cdot \kappa(t)+1)< \frac{1}{2}$, then there will still be at least $\frac{v(G)}{2}$ vertices in good pockets. Since each pocket has size at most $C$, this means that there will be at least $\frac{v(G)}{2C}$ good pockets in the collection from $\mathcal{H}$ after removing all bad pockets, and thus this is a wallet.
\end{proof}

\subsection{The Local Algorithm}

In this section we give our algorithm and prove Theorem \ref{algorithmicresults}. We first observe the above results imply the existence of deletable wallets.

\begin{lemma}\label{lem:manageableExist}
 Suppose $t \in \{4,5\}$ or $t$ is sufficiently large and let $G$ be a $K_t$-minor-free graph. If $t = 4$, let $c = 4$. If $t = 5$, let $c=5$. Otherwise, let $c = 13k = 13\max\{h(t),\lceil \frac{32}{2}(t-1)\rceil \}$.	There is a constant $d$ such that the following holds for all $C$. If $P$ is a $d$-deep $C$-pocket of $G$ with coboundary $X$, then there is a non-empty $c$-deletable subgraph $H$ of $P$.
\end{lemma}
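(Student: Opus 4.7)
The plan is to apply the appropriate deletability theorem proved earlier in the paper directly to the pocket $P$, viewed as a graph in its own right, with the set $X$ being the coboundary of $P$ in $G$. For $t=4$ we would use Theorem~\ref{K4minorfree}; for $t=5$ we would use Theorem~\ref{K5minorfree}; and for large $t$ we would use Theorem~\ref{K_tDeletability}. Each of these theorems states (with slightly different constants) that if a $K_t$-minor-free graph has no $c$-deletable subgraph disjoint from a specified set $X$, then the number of vertices of the graph is bounded by a constant (depending on $t$) times $|X|$.

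First, note that $P$ is itself $K_t$-minor-free as an induced subgraph of $G$, and by the hypothesis that $P$ is $d$-deep, the coboundary $X$ is nonempty and satisfies $|X|\le v(P)/d$. Suppose for contradiction that $P$ contains no $c$-deletable subgraph (with $P$ as the ambient graph) disjoint from $X$. Applying the relevant theorem yields $v(P)\le \alpha_t\,|X|$, where $\alpha_t$ depends only on $t$: concretely $\alpha_4=11$, $\alpha_5=61c$ (with $c$ as in Theorem~\ref{Lukestheorem}), and for large $t$, $\alpha_t=6k\,N(t)$ (using $m^P_{6k}(X)\le 6k|X|$).

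Now choose $d>\alpha_t$. Combining with $|X|\le v(P)/d$ gives $v(P)\le \alpha_t|X|\le (\alpha_t/d)\,v(P)<v(P)$, a contradiction. Hence $P$ does contain a nonempty $c$-deletable subgraph $H$ disjoint from $X$, where deletability is measured in $P$.

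The only remaining point is to promote $c$-deletability from the ambient graph $P$ to the ambient graph $G$; this is where one must be slightly careful, but it is not really an obstacle. Since $H\subseteq V(P)\setminus X$ and $X$ is precisely the set of vertices of $P$ with a neighbour in $V(G)\setminus V(P)$, no vertex of $H$ has a neighbour outside $P$. Therefore $d_G(v)=d_P(v)$ for every $v\in V(H)$, and the list-size requirement $|L(v)|\ge c-(d_G(v)-d_H(v))$ for $c$-deletability in $G$ coincides with the one for $c$-deletability in $P$. Thus $H$ is $c$-deletable in $G$, as required. The choice of $d$ depending on $t$ but not on $C$ is exactly what the statement demands.
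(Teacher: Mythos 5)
Your overall strategy---combine the ``$v \le \mathrm{const}\cdot|X|$'' deletability theorems with $d$-deepness to force a contradiction, and then transfer the resulting deletable subgraph to $G$---is the same as the paper's, but the execution has a genuine gap: you have conflated the boundary and the coboundary of $P$. In the paper's terminology the coboundary $X$ consists of vertices of $G$ \emph{outside} $P$ that have a neighbour in $P$, so $X\cap V(P)=\emptyset$. Consequently, applying Theorem~\ref{K4minorfree}, Theorem~\ref{K5minorfree} or Theorem~\ref{K_tDeletability} ``to $P$ viewed as a graph in its own right, with the set $X$'' is ill-posed: those theorems take a subset of the vertex set of the graph they are applied to, and inside $P$ the condition ``disjoint from $X$'' is vacuous. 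Worse, your final transfer step rests on the false assertion that ``$X$ is precisely the set of vertices of $P$ with a neighbour in $V(G)\setminus V(P)$''---that is the boundary, not the coboundary. A $c$-deletable subgraph of $P$ (with $P$ as ambient graph) may well contain boundary vertices of $P$; for such a vertex $v$ one has $d_G(v)>d_P(v)$, the lists permitted in the definition of $c$-deletability in $G$ are smaller than those permitted in $P$, and deletability in $P$ does not imply deletability in $G$. If instead you really intend $X$ to be the boundary, the transfer step is fine but the counting step collapses: $d$-deepness bounds the \emph{coboundary} by $v(P)/d$, and the boundary can be far larger (one outside vertex adjacent to every vertex of $P$ gives coboundary of size $1$ but boundary of size $v(P)$), so the inequality $|X|\le v(P)/d$ you use is unjustified.

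The paper's proof avoids both problems by applying the deletability theorems not to $P$ but to $G[V(P)\cup X]$ with the coboundary $X$ as the prescribed set. Since every neighbour of a vertex of $P$ lies in $V(P)\cup X$, the degrees of $P$-vertices in $G[V(P)\cup X]$ coincide with their degrees in $G$, so any $c$-deletable subgraph of $G[V(P)\cup X]$ disjoint from $X$ is automatically $c$-deletable in $G$---there is no need for it to avoid the boundary of $P$. The counting then reads $v(P)+|X|\le \alpha_t\,|X|$ (with your $\alpha_t$), which contradicts $v(P)\ge d\,|X|$ once $d$ exceeds the relevant constant; your observation that for large $t$ this constant involves $N(t)$, via $m^{G}_{6k}(X)\le 6k|X|$, is consistent with what Theorem~\ref{K_tDeletability} yields. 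With this one change---carry out the whole argument in $G[V(P)\cup X]$ rather than in $P$---your proof becomes the paper's.
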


\begin{proof}
If $t=4$, set $d=12$. If $t=5$, set $d$ to be the constant in Theorem \ref{K5minorfree}. If $t$ is large, let $d = 6k$. Let $P$ be a $d$-deep $C$-pocket of $G$ with coboundary $X$, and further suppose that $P$ is a vertex minimum counterexample. Note $P$ is not $c$-deletable, as otherwise $P$ would not be a counterexample. Thus $G[P \cup X]$ has no deletable subgraph disjoint from $X$, and therefore by Theorems \ref{K4minorfree}, \ref{K5minorfree}, and \ref{largevaluesoft}, $v(P)+\abs{X}\leq d\abs{X}$. Thus $v(P)\leq (d-1)\abs{X}$. This contradicts the assumption that $P$ is $d$-deep, giving the result.
\end{proof}

Putting everything together, we get the following theorem:
\begin{thm}
\label{thm:manageableExist}
Every $K_{4}$-minor-free graph contains a deletable $(C_{1},4)$-wallet for some constant $C_{1}$. There exists constants $C_{2}$ such that every $K_{5}$-minor-free graph has a deletable $(C_{2},5)$-wallet. 
For $k = 13\max\{h(t),\lceil \frac{32}{2}(t-1)\rceil \}$, and sufficiently large $t$, there exists a constant $C_{3}$ such that every $K_{t}$-minor-free graph contains a deletable $(C_{3},13k)$-wallet. 
\end{thm}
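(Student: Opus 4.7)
The plan is to deduce Theorem \ref{thm:manageableExist} directly by combining the three preceding wallet-existence results with Lemma \ref{lem:manageableExist}, which upgrades non-deletable pockets into deletable subgraphs. In each case I would start with the appropriate $(C, c, d)$-wallet $\mathcal{H}$: $c = 4$, $d = 12$ via Lemma \ref{K4ClutchExist} when $t = 4$; $c = 5$ and $d$ the constant from Theorem \ref{K5minorfree} via Theorem \ref{K5ClutchExist} when $t = 5$; and $c = 13k$, $d = 6k$ via the large-$t$ wallet theorem when $t$ is sufficiently large. Note that these parameters match exactly those required by Lemma \ref{lem:manageableExist} in each case, which is the reason the lemma was formulated as it was.

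For each pocket $P \in \mathcal{H}$, by the definition of a $(C,c,d)$-wallet, $P$ is either already $c$-deletable or $d$-deep. In the first case set $H_P = P$. In the second case, apply Lemma \ref{lem:manageableExist} to obtain a non-empty $c$-deletable subgraph $H \subseteq P$. If $H$ is not connected, replace it by any one of its connected components; this preserves $c$-deletability, because given any list assignment on a component $H'$ satisfying the degree condition relative to $G$, we can extend it to an assignment on $H$ by giving arbitrary $c$-element lists to the remaining components (which trivially satisfies $|L(v)| \geq c - (d_G(v) - d_H(v))$), then restrict an $L$-colouring of $H$ back to $H'$. Call the resulting connected, $c$-deletable subgraph $H_P$.

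Now set $\mathcal{H}' = \{H_P : P \in \mathcal{H}\}$. Each $H_P$ is a $C$-pocket: it is connected, $v(H_P) \leq v(P) \leq C$, and $d_G(v) \leq C$ for all $v \in V(H_P) \subseteq V(P)$. Each $H_P$ is $c$-deletable by construction. Since the pockets of $\mathcal{H}$ are non-touching and $H_P \subseteq P$, the members of $\mathcal{H}'$ are also non-touching. Finally, $|\mathcal{H}'| = |\mathcal{H}| \geq \frac{1}{2C} v(G)$, so $\mathcal{H}'$ is a deletable $(C, c)$-wallet, proving the theorem with $C_1 = C_2 = C_3$ taken to be the corresponding constants from the three source theorems.

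There is essentially no obstacle left at this stage; the heavy lifting was done in the proofs that $(C,c,d)$-wallets exist (which rely on Lemma \ref{initialPockets} combined with edge-density bounds for $K_t$-minor-free bipartite graphs) and in Lemma \ref{lem:manageableExist} (which is itself a packaging of Theorems \ref{K4minorfree}, \ref{K5minorfree}, and \ref{K_tDeletability} applied inside a pocket). The only subtlety to verify is that restricting $c$-deletability to a connected component preserves it, which I handle as above via the slack in the degree inequality defining deletability.
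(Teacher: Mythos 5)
Your proposal is correct and follows essentially the same route as the paper: take the $(C,c,d)$-wallet from Lemma~\ref{K4ClutchExist}, Theorem~\ref{K5ClutchExist}, or the large-$t$ wallet theorem, and replace each deep pocket by the non-empty $c$-deletable subgraph supplied by Lemma~\ref{lem:manageableExist}, noting that all wallet properties are preserved. Your extra step of passing to a connected component (and verifying that $c$-deletability survives via the slack in the degree condition) is a valid refinement of a point the paper's proof passes over silently.
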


\begin{proof}
By Theorem \ref{K4ClutchExist}, $K_{4}$-minor-free graphs have $(C,4,12)$-wallets. Let $\mathcal{H}$ be any such wallet. By applying Lemma \label{lem:manageableExist}, to all components of $\mathcal{H}$ that are not $c$-deletable, we obtain a set of components $\mathcal{H'}$ where each component is $c$-deletable, a $C$-pocket, all components are non-touching and $|\mathcal{H}'| = |\mathcal{H}|$. It easily follows there exists a constant $C_{1}$ such that $\mathcal{H}'$ is a deletable $(C_{1},4,12)$-wallet.

The same argument works for $K_{5}$-minor-free graphs by appealing to Theorem \ref{K5ClutchExist} instead of Theorem \ref{K4ClutchExist}, and similarly for the large $K_{t}$-minor-free case. 
\end{proof}

The following theorem follows from the proofs given in \cite{Postlealgorithms}: 

\begin{thm}[\cite{Postlealgorithms}]\label{thm:postlealgorithm}
If $\mathcal{G}$ is any family of graphs satisfying the following three conditions:
\begin{itemize}
	\item $\mathcal{G}$ is closed under taking subgraphs,
	\item for every $G\in\mathcal{G}$ with $v(G)>k$,  the graph $G$ contains a deletable $(C,c,d)$-wallet for some fixed constants $C,c,$ and $d$, 
	\item every $G\in\mathcal{G}$ is $c$-list-colourable, 
\end{itemize}
then Algorithm \ref{ColourAlg} returns a $c$-list-colouring of $G$ in $O(\log v(G))$ rounds. 
\end{thm}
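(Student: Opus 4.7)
The plan is to argue that the algorithm and analysis from \cite{Postlealgorithms} go through essentially verbatim, since the structural properties of surface-embedded graphs used there are invoked only through the three abstractions listed in the hypothesis: subgraph closure, existence of deletable wallets, and $c$-list-colourability of the base cases. Concretely, I would review the recursive structure of Algorithm \ref{ColourAlg} in this abstract setting. On input $(G,L)$ with $v(G)>k$, each vertex gathers its $C$-neighbourhood in $O(C)$ rounds and, using a deterministic rule based on vertex identifiers, the vertices collectively identify a deletable $(C,c,d)$-wallet $\mathcal{H}$ (existence is guaranteed by the second hypothesis). The algorithm then recurses on $G' := G - \bigcup_{H\in\mathcal{H}} V(H)$, which lies in $\mathcal{G}$ by the closure hypothesis, to produce an $L$-colouring $\phi'$ of $G'$. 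Finally, each pocket $H \in \mathcal{H}$ is coloured in parallel by extending $\phi'$, and in the base case $v(G)\leq k$ every vertex learns its whole component in $O(1)$ rounds and brute-forces a colouring, which exists by the third hypothesis.

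Correctness would follow inductively. The recursive call produces a valid $L$-colouring of $G'$ by induction on $v(G)$. For each pocket $H\in\mathcal{H}$, since $H$ is $c$-deletable and $|L(v)| \geq c$ for every $v\in V(H)$, the induced list assignment $L'(v) = L(v) \setminus \{\phi'(u) : u \in N_G(v)\setminus V(H)\}$ on $V(H)$ satisfies $|L'(v)| \geq c - (d_G(v)-d_H(v))$, and so an $L'$-colouring of $H$ exists and can be combined with $\phi'$. Because the pockets are pairwise non-touching (no shared vertices and no edges across pockets), these extensions are made independently in parallel with no conflicts, yielding a valid $L$-colouring of $G$.

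For the runtime, the fourth wallet property gives $|\mathcal{H}| \geq v(G)/(2C)$, and as each pocket has at least one vertex we delete at least a $1/(2C)$-fraction of vertices per recursive call; hence the recursion depth is $O(\log v(G))$. Each level performs only $O(C)$ distributed rounds: constantly many rounds to gather $C$-neighbourhoods, locally identify candidate $C$-pockets and select a wallet via a canonical tie-breaking rule on identifiers, and then another $O(C)$ rounds to extend the colouring to each constant-size pocket. Since $C$, $c$, and $d$ are constants depending only on $\mathcal{G}$, the total runtime is $O(\log v(G))$.

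The main obstacle, and the point that requires importing the machinery of \cite{Postlealgorithms} rather than reproving it, is the deterministic distributed selection of a wallet in constant rounds: existence alone is not enough, and one must argue that from the purely local view (a $C$-ball) each vertex can consistently determine whether it belongs to a chosen pocket of $\mathcal{H}$ so that the selected collection is globally non-touching and still of size $\Omega(v(G))$. This is precisely the wallet-extraction subroutine of \cite{Postlealgorithms}, which uses only that pockets are constant-size and that a sufficiently dense collection exists, and so applies in our abstract setting without modification.
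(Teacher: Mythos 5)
Your high-level recursion-plus-extension scheme captures the intent, but it does not prove the stated theorem, which is about the specific Algorithm~\ref{ColourAlg} --- and that algorithm never selects a wallet. In its first phase it finds, for every vertex of degree at most $C$, \emph{some} $c$-deletable pocket containing it (if one exists), deletes the union $H$ of all these possibly touching and overlapping pockets, and recurses on $G-H$; the wallet hypothesis is used only in the analysis, to guarantee that $H$ contains every vertex of every wallet pocket and hence that at least a $\frac{1}{2C}$-fraction of the vertices is removed at each level. The touching/overlap issues that you try to avoid by pre-selecting a non-touching subfamily are instead handled after the recursion returns: a $C^{2C}$-colouring of $H^{2C}$, computed \emph{in parallel with the recursive call} so that its cost is hidden, splits $H$ into $C^{2C}$ batches of pairwise-distant pieces, which are re-coloured sequentially batch by batch (each batch in parallel), using deletability in $G$.

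The step you flag as the ``main obstacle'' and outsource to a ``wallet-extraction subroutine'' of \cite{Postlealgorithms} is exactly where the gap lies: no such constant-round subroutine is used there, and none can exist in general. A canonical identifier-based tie-breaking rule (say, keep a pocket iff its minimum identifier beats those of all touching pockets) does produce a non-touching family, but on a chain of pockets with monotone identifiers it keeps only one pocket, so it does not guarantee the $\Omega(v(G))$ size you need; and any deterministic LOCAL procedure selecting a linear-size independent subfamily of the bounded-degree conflict graph is a symmetry-breaking task with an $\Omega(\log^{*} v(G))$ lower bound (Linial-type arguments on rings), so performing it inside every level of the recursion would yield $O(\log v(G)\cdot \log^{*} v(G))$ rounds rather than the claimed $O(\log v(G))$. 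This is precisely why Algorithm~\ref{ColourAlg} deletes \emph{all} vertices lying in deletable pockets up front, uses the wallet only as an existence guarantee for the per-level fraction, and defers the symmetry breaking to the power-graph colouring run in parallel with the recursion. Your correctness argument for extending non-touching deletable pockets and your fraction-per-level count are fine, but the algorithm you analyse is not the one in the theorem, and the constant-rounds-per-level claim on which your runtime rests is not established.
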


We note that our definition of wallet is slightly different the the one given in \cite{Postlealgorithms}, but nevertheless the same argument goes through with our definition. Therefore Theorem \ref{algorithmicresults} follows immediately. 

\begin{algorithm}\caption{Distributed Colouring algorithm}\label{ColourAlg}
	\begin{algorithmic}[1]
		\Params
			\begin{itemize}
				\item Constant $C\geq 2$
				\item Constant $k$
			\end{itemize}
		\EndParams
		\Input
			\begin{itemize}
				\item Graph $G$
				\item $c$-List assignment $L$ of $G$
			\end{itemize} 
		\EndInput
		\Output
			An $L$-colouring $\phi$ of $G$.
		\EndOutput
		\If{$v(G)\leq k$} find an $L$-colouring $\phi$ of $G$ by exhaustive search and \Return it.
		\Else 
			\ForAll{$v\in V(G)$ of degree at most $C$}
				Find, if one exists, a $c$-deletable pocket $H_v$ containing $v$.
			\EndFor
			\State Set $H=\bigcup_v H_v$ and let $G_{0} = G-H$
			\Recurse on $G_0$ and $L$ to obtain $\phi_0$ \EndRecurse
			\State In parallel to the recursion, find a $C^{2C}$-colouring $\psi$ of $H^{2C}$.
			\For{$i=1,...,C^{2C}$}
				\begin{itemize}
					\item Let $H_i=\psi^{-1}(\{i\})$
					\item Let $G_i=G[V(G_{i-1})\cup H_i]$
					\item Restrict $\phi_{i-1}$ to $G_i-H_i$ and then extend it to $G_i$ to obtain $\phi_i$
				\end{itemize}
			\EndFor
		\EndIf
		\State \Return $\phi$
	\end{algorithmic}
\end{algorithm}

Here is a sketch of the proof of Theorem~\ref{thm:postlealgorithm}.

\begin{proof}[Sketch of proof]
    We now sketch the proof. Let $G\in \mathcal{G}$ with $n$ vertices. In step 6, we find a deletable pocket for every vertex that lies in such. In step 10, we use colouring as a way to find sets of non-touching pockets. Since there is a $(C,c,d)$-wallet, we are guaranteed to find a large (linear in $n$) set of non-touching deletable pockets. We set these aside. Since $\mathcal{G}$ is closed under taking subgraphs, once this wallet is removed, the remaining graph is still in $\mathcal{G}$ and so has a deletable wallet. At each step, we set aside at least a constant fraction of the remaining vertices. Thus we need only run the recursion a logarithmic number of times. Furthermore, once we have coloured everything apart from what was set aside before recursion $i$, we can extend our colouring back to what was set aside at recursion $i$ because it was deletable with respect to what was left when we set it aside.
\end{proof}

\section{Lower Bounds}
\label{lowerboundssection}
This section is dedicated to proving the following observation.

\begin{obs}
For each integer $t\ge 3$ and positive integer $n$, there exists a graph $G$ on $n(t-1)+1$ vertices which is $\lceil n/2 \rceil$-locally-$K_{t}$-minor-free but not $(t-1)$-colourable.
\end{obs}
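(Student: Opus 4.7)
The plan is to take $G$ to be the $(t-2)$-th power of a cycle. Concretely, with $N := n(t-1)+1$, let $G = C_N^{t-2}$: the vertex set is $\mathbb{Z}/N\mathbb{Z}$ and $ij \in E(G)$ iff $0 < (j - i \bmod N) \le t-2$. The vertex count $N = n(t-1)+1$ is immediate.

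To show $G$ is not $(t-1)$-colourable, I would first note that any $t-1$ consecutive vertices $\{i, i+1, \ldots, i+t-2\}$ form a clique in $G$, so $\chi(G) \ge t-1$. Suppose for contradiction that a proper $(t-1)$-colouring $c$ exists. Then every window of $t-1$ consecutive vertices must use each of the $t-1$ colours exactly once, which forces $c(i) = c(i+(t-1))$ for all $i$. Iterating this identity around the cycle implies $(t-1) \mid N$, contradicting $N \equiv 1 \pmod{t-1}$. Hence $\chi(G) \ge t$.

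For the local $K_t$-minor-freeness, I would first observe that $d_G(i,j) = \lceil d_{C_N}(i,j)/(t-2) \rceil$, so the ball $B_r[v]$ of radius $r := \lceil n/2 \rceil$ around any vertex $v$ is the subgraph induced on those vertices within cyclic distance $r(t-2)$ of $v$. Provided $2r(t-2) + 1 \le N-1$, this ball is supported on a proper arc of the cycle and is therefore isomorphic to an induced subgraph of the path power $P_M^{t-2}$ for some $M$. The latter admits the smooth tree decomposition of width $t-2$ given by the sliding bags of $t-1$ consecutive vertices, so $\mathrm{tw}(B_r[v]) \le t-2$, and in particular $B_r[v]$ has no $K_t$ minor.

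The only technical obstacle is verifying the inequality $2\lceil n/2 \rceil(t-2) < n(t-1)$ which ensures the ball is a proper arc: for even $n$ it reduces to $1 \le n$, and for odd $n$ it simplifies to $n \ge t-1$. The remaining small odd cases are degenerate (for instance, for $n=1$ the claim boils down to the earlier Observation~\ref{construction} with $\lfloor n/2 \rfloor$, handled by $K_t$ itself since a $0$-ball is a single vertex), and can be dispensed with by direct inspection. Together these verifications yield the required graph $G$.
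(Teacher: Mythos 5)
Your colouring argument is fine, and the cycle-power idea does work when $n$ is large compared with $t$, but the verification of local $K_t$-minor-freeness has a genuine gap. Knowing that the ball $B_r[v]$ (an arc of $2r(t-2)+1$ consecutive vertices) is a proper subset of the cycle does not make it an induced subgraph of a path power: if the omitted part of the cycle has fewer than $t-2$ vertices, the subgraph induced on the arc contains ``wrap-around'' edges between vertices near its two ends. What you actually need is $N-\bigl(2r(t-2)+1\bigr)\ge t-2$, and with $r=\lceil n/2\rceil$ this fails whenever $n<t-2$ (for $n$ even) or $n\le 2t-5$ (for $n$ odd) --- a range that grows with $t$, so it cannot be dismissed as finitely many degenerate cases. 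Moreover, in this range it is the construction, not just the write-up, that fails: take $t=4$, $n=3$, so $N=10$, $G=C_{10}^2$, $r=2$. The ball around $0$ is $G-\{5\}$, i.e.\ the arc $6,7,\dots,4$ together with the chord $\{6,4\}$, and the branch sets $\{6\},\{4\},\{7,9,1,3\},\{8,0,2\}$ form a $K_4$ minor inside it, so $C_{10}^2$ is not $2$-locally-$K_4$-minor-free. Your treatment of $n=1$ also does not address the statement as written: with radius $\lceil 1/2\rceil=1$ (not a $0$-ball), the only graph on $t$ vertices that is not $(t-1)$-colourable is $K_t$, whose radius-$1$ ball is $K_t$ itself; these small cases are precisely where the $\lceil\cdot\rceil$/$\lfloor\cdot\rfloor$ discrepancy bites (the paper's own proof in fact establishes the $\lfloor n/2\rfloor$ version and tacitly assumes $n\ge 3$), so they cannot be waved away by ``direct inspection.''

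For comparison, the paper takes a different route that avoids all of this: a necklace of $n$ copies of $K_t$ minus an edge, consecutive copies glued at a cut vertex, with one extra edge closing the cycle. Non-$(t-1)$-colourability is a colour-propagation argument much like yours, and locality is immediate from the block structure: a ball of radius $\lfloor n/2\rfloor$ cannot wrap all the way around, its blocks are copies of $K_t-e$ (or smaller), and any $K_t$ minor would have to live inside a single block, which is impossible. To salvage your cycle-power approach you would need to restrict to roughly $n\ge 2t-3$ (imposing the stronger arc condition above) and give a separate construction for the remaining values of $n$, or switch to a block-type construction as in the paper.
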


\begin{proof}
Let $H_t$ be the graph obtained from $K_t$ by removing one edge. Note $K_t$ has exactly two vertices of degree $t-2$. Call them $a$ and $b$. Let $n\geq 3$ be a given integer. Let $G_t^n$ be the graph obtained by taking $n$ copies of $H_t$ and placing them in an order, identifying the vertex $a$ of every copy of $H_t$ with the vertex $b$ of the next and then finally adding an edge between the $b$ vertex of the last $H_t$ and the $a$ vertex of the first.

We claim that for every positive integer $n$, $G_t^n$ is not $(t-1)$-colourable. Suppose it were. Consider such a colouring on each copy of $H_t$. There are $t-1$ colours and $t$ vertices so at least one colour must be reused. The only pair of vertices in $H_t$ that are not adjacent are $a$ and $b$. Thus these two must share a colour while the other $t-2$ vertices get their own colour. Now, in the construction of $G_t^n$, we identified the $a$ and $b$ vertices of consecutive copies of $H_t$. Thus all $a$ and $b$ vertices in all the copies of $H_t$ must share the same colour. However, there is an edge between the $a$ vertex of the first copy of $H_t$ and the $b$ vertex of the last copy of $H_t$. Thus both endpoints of this edge share the same colour and we have a contradiction. Thus $G_t^n$ is not $(t-1)$-colourable. Also note that, $v(G_t^n)=n(t-1)+1\geq n(t-1)$.

Next, we claim that $G_t^n$ is $\lfloor n/2 \rfloor $-locally-$K_t$-minor-free. Suppose not. Let $B$ be a neighbourhood of $G_t^n$ of radius $n/2$ which has a $K_t$ minor. Then there must be connected subgraphs $B_1,...,B_t$ of $B$ such that for every pair $1\leq i<j\leq t$, there is an edge with one endpoint in $B_i$ and the other in $B_j$. By construction, $B$ is not two connected since it does not have sufficient radius to contain the full necklace structure. Its block decomposition consists of up to $n/4+1$ copies of $H_t$, up to two copies of $K_{t-1}$ and up to one copy of $K_2$. The single vertex shared by any pair of blocks may only lie in up to one $B_i$. Thus there must be a block containing a vertex in every $B_i$. Otherwise, there could not be an edge between subgraphs that are in different blocks but do not contain the shared vertex. The only blocks with at least $t$ vertices are the copies of $H_t$. However, these do not have a $K_t$ minor. Thus $B$ has no $K_t$ minor.

%Thus, by the previous two paragraphs, for any given $C$, it suffices to find $n$ such that $n/2\geq C\log v(G_n^t)\geq C\log(n(t-1))$. Since $n/2$ grows much faster than $\log(n(t-1))$, this will be true for large enough values of $n$. Thus, we have a construction for $\lceil C\log(v(G)) \rceil$-locally-$K_t$-minor-free graphs that are not $(t-1)$-colourable, as desired.
\end{proof}

%It is interesting to note here that the odd cycle construction for $t=3$ is in fact a special case of the more general construction $H_3$ is a path of length two. Joining some number of them by identifying endpoint vertices between consecutive copies gives a path of even length. Adding in an edge between the free endpoints of the first and last copies gives us an odd cycle.

%Write something about DP colouring, maybe in conclusion or intro

%\section{Conclusion}
%In conclusion, we showed that the current best known bounds for list Hadwiger's conjecture also hold for locally $K_t$-minor free graphs for $t\leq 5$ and asymptotically for large $t$. Our proof also gives a $O(\log n)$ round distributed algorithm for colouring such graphs. This extends a result by the second author on locally planar graphs. Similar problems on other minor-closed graph families remain open, for example locally $K_6$-minor-free graphs and locally linklessly embeddable graphs. Proving a local version for the odd generalization of Hadwiger's Conjecture would also be of interest. %Even in the case of locally bipartite graphs, the best known bound is Johansson's theorem.

\subsection*{Acknowledgments}

The authors would like to thank Daniel Cranston, Michelle Delcourt, Louis Esperet, Matthew Kroeker and Ronen Wdowinski for helpful discussions related to the presentation of this paper.

\bibliographystyle{plain}  
\bibliography{HadwigerBib}

\end{document}